\definecolor{verde}{HTML}{288906}
\pgfplotsset{compat=1.11}
\definecolor{tempcolor}{RGB}{224,235,239}
\def\multiset#1#2{\ensuremath{\left(\kern-.3em\left(\genfrac{}{}{0pt}{}{#1}{#2}\right)\kern-.3em\right)}}
\theoremstyle{plain} 
\newtheorem{theorem}{Theorem}[section]
\newtheorem{lemma}[theorem]{Lemma}
\newtheorem{proposition}[theorem]{Proposition}
\newtheorem{corollary}[theorem]{Corollary}
\newtheorem{conjecture}[theorem]{Conjecture}
\newtheorem{thm}[]{Theorem}
\theoremstyle{definition}  
\newtheorem{defn}[theorem]{Definition}
\newtheorem{example}[theorem]{Example}
\theoremstyle{remark}  
\newtheorem*{remark}{Remark}
\DeclareMathOperator{\depth}{depth }
\DeclareMathOperator{\height}{height }
\DeclareMathOperator{\hgt}{ht }
\DeclareMathOperator{\pmd}{pmd }
\DeclareMathOperator{\Cl}{Cl }
\DeclareMathOperator{\cl}{cl }
\DeclareMathOperator{\Min}{Min }
\DeclareMathOperator{\Ass}{Ass }
\DeclareMathOperator{\adj}{adj }
\DeclareMathOperator{\grade}{grade }
\DeclareMathOperator{\chara}{char}
\DeclareMathOperator{\ini}{in}
\renewcommand\appendix[1]{
\chapter*{#1}
\addcontentsline{toc}{chapter}{#1}
}
\title[Normality, factoriality and strong $F$-regularity of LSS rings]{Normality, factoriality and strong $F$-regularity of Lovász-Saks-Schrijver rings}
\author{Eliana Tolosa Villarreal}
\address{Dipartimento di Matematica, Dipartimento di Eccellenza 2023-2027,
Universit\`a degli Studi di Genova, Italy}
\email{etolosav@gmail.com}
\thanks{The author was supported  by the MIUR Excellence Department Project awarded to the Dept.~of Mathematics, Univ.~of Genova, CUP D33C23001110001,  and by GNSAGA-INdAM.\\
This material is based upon work supported by the National Science Foundation under Grant No. DMS-1928930, while the author was in residence at the Mathematical Sciences Research Institute in Berkeley, California, during the semester/year of Spring 2024"}
\date{}
\begin{document}

\begin{abstract}
    Every simple finite graph $G$ has an associated Lovász-Saks-Schrijver ring $R_G(d)$ that is related to the $d$-dimensional orthogonal representations of $G$. The study of $R_G(d)$ lies at the intersection between algebraic geometry, commutative algebra and combinatorics. We find a link between algebraic properties such as normality, factoriality and strong $F$-regularity of $R_G(d)$ and combinatorial invariants of the graph $G$. In particular we prove that if $d \geq \pmd(G)+k(G)$ then $R_G(d)$ is $F$-regular in finite characteristic and rational singularity in characteristic $0$ and furthermore if $d \geq \pmd(G)+k(G)+1$ then $R_G(d)$ is UFD. Here $\pmd(G)$ is the positive matching decomposition number of $G$ and $k(G)$ is its degeneracy number. 
\end{abstract}

\maketitle

\section{Introduction}
  Lovász introduced in 1979 \cite{Lovasz79} the orthogonal representations of a graph $G$ as a tool to investigate certain invariants of $G$, in particular the so called Shannon capacity. An orthogonal representation associates to each vertex of $G$ a vector in a (real) vector space $W$ so that non-adjacent vertices are associated to orthogonal vectors. An orthogonal representation is said to be in general position if every subset of vertices of $G$ of size at most $\dim W$ is sent to linearly independent vectors of $W$. 

The set of all orthogonal representations of  $G$ in a given vector space give rise to an algebraic variety whose defining equations express the orthogonality conditions. Furthermore, the (closure of the) set of  the general position orthogonal representations is  a subvariety of  it (possibly empty). 
Indeed Lovász, Saks and Schrijver studied in \cite{LSS} the irreducibility and non-emptiness of the variety of general position orthogonal representations of $G$. Lovász's  recent book \cite{LovaszBook} revises these developments and  discusses connections with other subjects as, for example,  quantum physics. 
 
 The equations defining  orthogonal representations of $G$ give rise to the Lovázs-Saks-Shrijver ideals and rings (LSS for short), named after the authors. 
 
 The study of such LSS varieties, ideals and rings  establishes a new connection between   commutative algebra and combinatorics and, as discussed in \cite{Conca_2019},  it is also  intimately connected with classical subjects such as invariant theory and determinantal rings. 
 
Let us recall the formal definition of LSS ideals and rings in the generality we need. We will follow the conventions of \cite{Conca_2019} according to which the points on the variety associated to the LSS ideal of $G$ correspond to the orthogonal representation of the complementary graph.  
  
\begin{defn} Let $\mathbb{K}$ be a field and $d$ a positive integer. 
    Let $G = ([n], E)$ be a simple graph with $[n] = \{1, \dots, n\}$.  Consider the polynomial ring on $nd$ variables $S = \mathbb{K}[y_{ij} \hspace{2pt} | \hspace{2pt} i \in [n], \hspace{1pt} j \in [d] \hspace{1pt}]$. For each edge $e = \{i,j\} \in E$ we set 
    $$f_{e} = \sum \limits_{k=1}^{d} y_{ik}y_{jk}.$$
    The Lovázs-Saks-Shrijver ideal $L_{G}(d)$ of the graph $G$ is the ideal generated by all such polynomials, that is,
    $$L_{G}(d) = (f_e \hspace{2pt} | \hspace{2pt} e \in E) \subseteq S.$$
    We call the quotient ring $R_G(d) = \Large\sfrac{S}{L_G(d)}$ the Lovázs-Saks-Shrijver ring of the graph $G$, LSS ring for short. 
\end{defn}

 The systematic study of algebraic properties of LSS ideals and rings started with the papers of Herzog, Macchia, Saeedi Madani and Welker \cite{HerzogEtAll2015} for $\dim W=2$ and Conca and Welker \cite{Conca_2019} for arbitrary $\dim  W$. For further results on LSS ideals and rings see also \cite{GSW} and \cite{Kumar}.
 
In  \cite{HerzogEtAll2015}  the authors proved  that when  $\chara (\mathbb{K}) \neq 2$, $L_G(2)$ is always a radical ideal while  for $\chara (\mathbb{K}) = 2$, the ideal $L_G(2)$ is radical if $G$ is a bipartite graph. 

In \cite{Conca_2019} the authors  presented various examples of  non-radical ideals $L_G(d)$ with $d>2$ in characteristic $0$. Furthermore, they introduced a new combinatorial invariant of $G$, the \textit{positive matching decomposition number} or $\pmd(G)$, and proved that for $d \geq \pmd(G)$ the ideal $L_G(d)$ is a radical complete intersection and for $d \geq \pmd(G) + 1$ it is prime. Denote by $\Delta(G)$ the degree of $G$ and by $k(G)$ its degeneracy number and set $\alpha(G) = \Delta(G) + k(G) - 1$ (see Section \ref{GraphT} for definitions). In his 2019 preprint \cite{Kapon} Kapon shows that for $d\geq \alpha(G)$ the ideal $L_G(d)$ is a complete intersection and for $d\geq \alpha(G) +1$ it defines an irreducible variety. These results suggest that there might be a general relationship between the two purely combinatorial  invariants $\pmd(G)$ and $\alpha(G)$. For certain families of graphs (e.g.~trees and complete graphs) they coincide but there are also graphs for which we have $\pmd(G) < \alpha(G)$ (see the graph in Example \ref{ExPMD-Termorder}). We do not know of examples where the opposite inequality holds. This makes us believe that for any graph $G$ we have $\pmd(G) \leq \alpha(G)$.

 We will focus on stronger algebraic properties of LSS rings such as normality, factoriality and $F$-regularity. Our results involve again the degeneracy $k(G)$ and the positive matching decomposition number $\pmd(G)$ of $G$. We list our main results next.

\begin{thm}[Theorem \ref{ThmSFR}]
\label{ThmA}

    Let $G = ([n], E)$ be a simple graph and let $d \geq \pmd(G)+ k(G)$. 
    \begin{itemize}
  \item[(1)] If  $\chara \mathbb{K} = p>0$ then $R_G(d)$ is strongly $F$-regular,
  \item[(2)] If  $\chara \mathbb{K} = 0$ then $R_G(d)$ has rational singularities.
  \end{itemize}
\end{thm}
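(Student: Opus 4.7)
The plan is to proceed by induction on the vertices of $G$, exploiting the ordering furnished by the degeneracy $k(G)$. Order the vertices $v_1,\dots,v_n$ so that each $v_i$ has at most $k(G)$ neighbors among its predecessors $\{v_1,\dots,v_{i-1}\}$, and let $G_i$ be the induced subgraph on the first $i$ vertices, with $m_i\le k(G)$ edges joining $v_i$ to those predecessors. This yields a presentation
$$R_{G_i}(d) \;\cong\; R_{G_{i-1}}(d)[y_{i,1},\dots,y_{i,d}] \,/\, (f_{e_1},\dots,f_{e_{m_i}}),$$
in which each new generator $f_{e_j}=\sum_{k=1}^{d} y_{w_j,k}\, y_{i,k}$ is linear in the freshly added variables, with coefficients $y_{w_j,k}\in R_{G_{i-1}}(d)$. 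The base case $i=1$ is a polynomial ring in $d$ variables, which is regular. Since $\pmd(G_i)\le\pmd(G)\le d$, the Conca--Welker result guarantees that the added generators form a regular sequence at each stage, so every $R_{G_i}(d)$ is a Gorenstein complete intersection domain.

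For the inductive step I would establish a descent statement along the natural projection $\mathrm{Spec}(R_{G_i}(d))\to\mathrm{Spec}(R_{G_{i-1}}(d))$. Because the new generators are linear in the $y_{i,k}$ with coefficient matrix $M_i=(y_{w_j,k})$ of size $m_i\times d$, this projection is, generically on the base, an affine bundle of relative dimension $d-m_i$, and fiber dimension jumps only where $M_i$ drops rank. The numerical hypothesis $d\ge\pmd(G)+k(G)$ gives $d-m_i\ge\pmd(G)$ free columns, precisely the slack needed to keep the ideals of maximal minors of $M_i$ of sufficiently large height inside $R_{G_{i-1}}(d)$. The technical heart is then a lemma of the form: if $A$ is a strongly $F$-regular Gorenstein $\mathbb{K}$-algebra (resp.\ one with rational singularities) and $B=A[\underline y]/(M\underline y)$ for a matrix $M$ whose ideals of minors have the required heights, then $B$ inherits the same property. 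Since complete intersections are Gorenstein, strong $F$-regularity coincides with $F$-rationality, which is controlled by Cohen--Macaulayness together with a codimension bound on the singular locus.

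The main obstacle is producing this codimension estimate on the singular locus of $R_{G_i}(d)$: one must show that the locus in $\mathrm{Spec}(R_{G_{i-1}}(d))$ where $M_i$ has small rank, meeting the complete intersection cut out by the $f_{e_j}$, has codimension at least two inside $\mathrm{Spec}(R_{G_i}(d))$. This is where the positive matching decomposition of $G_i$ enters: along the positive matching layers the rows of $M_i$ behave independently enough that classical height bounds for generic matrices can be applied, and the $\pmd(G)$ summand in the hypothesis supplies exactly the number of such layers required. The characteristic-zero part of the theorem then follows from the characteristic-$p$ one via the Hara--Mehta--Smith--Srinivas correspondence between strong $F$-regularity after reduction modulo $p$ and rational singularities in characteristic zero, which is applicable here because the whole construction is defined over $\mathbb{Z}$.
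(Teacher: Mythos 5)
Your inductive framework is the right one and matches the paper's: order the vertices by degeneracy, present $R_{G_i}(d)$ over $R_{G_{i-1}}(d)$ by $m_i\le k(G)$ linear forms in the new variables, observe that inverting a suitable $m_i\times m_i$ minor $D$ of the coefficient matrix turns the extension into a localized polynomial ring over $R_{G_{i-1}}(d)$ (this is Lemma \ref{RDcong}), and deduce the characteristic-zero statement from the positive-characteristic one via \cite{Smith, MehtasSrinivas}. The gap is in your ``technical heart.'' You propose to descend strong $F$-regularity by combining Gorensteinness, Cohen--Macaulayness, and a codimension bound on the locus where the coefficient matrix drops rank. But $F$-rationality (equivalently, strong $F$-regularity in the Gorenstein case) is \emph{not} detected by the codimension of the singular locus: there are Gorenstein complete intersections with isolated singularities that are not $F$-rational (cones over suitable smooth projective varieties, for instance), so no height estimate on ideals of minors, however large, can close the argument by itself. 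Serre-type criteria of the shape ``$S_2$ plus a condition in codimension one'' characterize normality, not $F$-rationality.

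What the paper does instead at this step is apply Glassbrenner's criterion (Theorem \ref{CoroGlassbrenner}): having shown $R_G(d)_D$ is strongly $F$-regular by induction, one must exhibit $e>0$ with $D\prod_{e\in E} f_e^{p^e-1}\notin\mathfrak{M}^{[p^e]}$. This is where $\pmd(G)$ actually enters --- not through heights of minor ideals, but through the term order of Lemma \ref{LemTermOrd}, under which the initial terms of the $f_{ij}$ are pairwise coprime squarefree monomials using only column indices $\le\pmd(G)$, while $D$ is the determinant of a $t\times t$ generic matrix ($t\le k(G)$) in the last $t$ columns. The hypothesis $d\ge\pmd(G)+k(G)$ guarantees these two sets of variables are disjoint, so every variable appears in the initial term of $D\prod f_e^{p^e-1}$ with exponent at most $p^e-1$, which is exactly the Fedder-type nonvanishing required. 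You would need to replace your singular-locus lemma with a test-element computation of this kind (or an equivalent splitting argument) for the induction to go through.
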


\begin{thm}[Theorem \ref{ThmUFD}]
\label{ThmB}
    Let $G = ([n], E)$ be a simple graph. If $d \geq \pmd(G) + k(G) + 1$, then the LSS ring $R_G(d)$ is \textit{UFD}.
\end{thm}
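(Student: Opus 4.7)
The plan is to combine Theorem~\ref{ThmA}, which yields normality, with Nagata's criterion for UFDs, arguing by induction on $n=|V(G)|$. By Theorem~\ref{ThmA} the hypothesis $d\geq\pmd(G)+k(G)+1>\pmd(G)+k(G)$ guarantees that $R_G(d)$ is strongly $F$-regular (respectively, has rational singularities), and in particular is a normal Cohen--Macaulay domain. Nagata's criterion then reduces the UFD property to exhibiting a prime element $f\in R_G(d)$ such that $R_G(d)[f^{-1}]$ is a UFD. The base case $n=1$ is trivial since $R_G(d)$ is then a polynomial ring; for $n\geq 2$ I would pick a vertex $v$ of minimum degree, so that $\deg_G(v)\leq k(G)$ by the definition of degeneracy, and take $f=y_{v,d}$. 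Since $\pmd(G-v)\leq\pmd(G)$ and $k(G-v)\leq k(G)$, the bound is inherited by $G-v$ and the inductive hypothesis applies to $R_{G-v}(d)$.

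For the primality of $y_{v,d}$, one analyses the quotient $R_G(d)/(y_{v,d})$: setting $y_{v,d}=0$ replaces each $f_{\{v,w\}}$ by $\sum_{k<d}y_{v,k}y_{w,k}$ and leaves all other $f_e$ unchanged, so the quotient is a ``mixed'' LSS ring in which $v$ carries $d-1$ coordinates while every other vertex carries $d$. The positive-matching-decomposition argument of Conca--Welker should adapt to this mixed-dimension setting, and the one-unit slack $d-1\geq\pmd(G)+k(G)$ should suffice to conclude that the quotient is a radical complete intersection and a domain.

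For the localisation $R_G(d)[y_{v,d}^{-1}]$, I would start from the presentation
\[
R_G(d)=R_{G-v}(d)[y_{v,1},\dots,y_{v,d}]\big/\bigl(f_{\{v,w\}}:w\in N(v)\bigr)
\]
and, after inverting $y_{v,d}$, solve each $f_{\{v,w\}}=0$ for $y_{w,d}$, thereby eliminating those variables. Performing the linear change of variables $z_{b,k}:=y_{v,d}y_{b,k}-y_{v,k}y_{b,d}$ for $b\notin N(v)\cup\{v\}$ and $k<d$ converts the edge relations with exactly one endpoint in $N(v)$ into standard LSS relations in $d-1$ coordinates, while edges disjoint from $N(v)$ retain their LSS form. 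The goal is to identify $R_G(d)[y_{v,d}^{-1}]$ with a Laurent polynomial extension of an LSS-type ring on $G-v$ with $d-1$ coordinates; this ring would be a UFD by the inductive hypothesis, and Nagata's criterion then concludes.

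The main obstacle lies in edges both of whose endpoints belong to $N(v)$: after eliminating $y_{a,d}$ and $y_{b,d}$ such a relation picks up a rank-one correction $y_{v,d}^{-2}(\mathbf{y}_v\cdot\mathbf{y}_a)(\mathbf{y}_v\cdot\mathbf{y}_b)$, so the associated bilinear form $M=y_{v,d}^2 I_{d-1}+\mathbf{u}_v\mathbf{u}_v^{T}$, where $\mathbf{u}_v=(y_{v,1},\dots,y_{v,d-1})$, is not in general congruent to the identity over the localisation. Overcoming this will likely require either a further (non-linear) change of variables absorbing the correction, an auxiliary localisation (say at $|\mathbf{y}_v|^2=y_{v,d}^2+|\mathbf{u}_v|^2$ to normalise $M$), or a secondary induction on $|E(G[N(v)])|$ via iterated Nagata-type steps; in the favourable case that $N(v)$ is independent (e.g.\ when $v$ is a leaf) the obstruction does not arise at all, and the argument runs through as described.
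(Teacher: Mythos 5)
Your overall strategy (induction on the number of vertices plus Nagata's criterion) matches the paper's, but your choice of the element to invert creates a gap that you yourself identify and do not close, and it is not a minor technicality. The paper does not invert a variable of the removed vertex: it removes a vertex $n$ with $t$ neighbours and inverts the determinant $D$ of the generic $t\times t$ matrix formed by the variables $y_{ij}$, $1\le i\le t$, $d-t+1\le j\le d$, attached to those \emph{neighbours} in the last $t$ coordinate slots. Inverting $D$ lets one solve the $t$ linear relations $f_{in}=0$ for the last $t$ coordinates $x_{d-t+1},\dots,x_d$ of the removed vertex itself, yielding $R_G(d)_D\cong R_{G'}(d)[x_1,\dots,x_{d-t}]_D$ (Lemma \ref{RDcong}); no edge internal to the neighbourhood is ever touched, so the rank-one correction you run into for edges with both endpoints in $N(v)$ simply never appears. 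Your scheme — eliminating $y_{w,d}$ for each $w\in N(v)$ after inverting $y_{v,d}$ — genuinely fails to produce an LSS-type ring whenever $G[N(v)]$ has an edge, and since for many graphs (e.g.\ complete graphs) every vertex has a non-independent neighbourhood, the ``favourable case'' you retreat to does not cover the general statement. As written, the localisation half of Nagata's criterion is therefore not established.

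The primality half is also only sketched, and here is where the specific shape of the bound $d\ge\pmd(G)+k(G)+1$ does real work in the paper: one degenerates via the weight vector giving weight $2$ to $y_{ij}$ with $j\le\pmd(G)+1$ and weight $1$ otherwise, so that $\ini_\omega(L_G(d)+(D))=L_G(\pmd(G)+1)+(D)$; since $t\le k(G)$, the determinant $D$ involves only coordinate indices $j>\pmd(G)+1$, hence the two summands use disjoint variables, each is prime, and their sum is prime. Your proposed quotient $R_G(d)/(y_{v,d})$ could likely be handled by the same weight-vector degeneration (the initial ideal would be $L_G(\pmd(G)+1)+(y_{v,d})$, again on disjoint variables since $d>\pmd(G)+1$), so that part is repairable; but you would still need to replace the elimination argument for the localisation, most naturally by switching to the determinant $D$ as the Nagata element, at which point you have reconstructed the paper's proof.
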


Notice that in the special case where $G$ is a forest we have that $\pmd(G) = \Delta(G)$ and $k(G)=1$. Hence Theorem \ref{ThmB} asserts that if $d \geq \Delta(G) + 2$ the LSS ring $R_G(d)$ is a UFD. By virtue of \cite[Thm. 1.5]{Conca_2019} we know that the ring $R_G(d)$ is a domain if an only if $d \geq \Delta(G)+1$. The next theorem discusses this case.

\begin{thm}[Theorem \ref{ThmNormal}]
\label{ThmC}
    Let $G = (V, E)$ be a forest and $d = \Delta(G)+1$. Then, the LSS ring $R_G(d)$ is normal.
\end{thm}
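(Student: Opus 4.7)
The plan is Serre's criterion for normality: $R_G(d)$ is normal iff it satisfies $(R_1)$ and $(S_2)$. As noted in the introduction, for a forest $G$ we have $\pmd(G)=\Delta(G)$, so $d=\Delta(G)+1\ge \pmd(G)$ and by \cite{Conca_2019} the ideal $L_G(d)$ is a complete intersection. Hence $R_G(d)$ is Cohen--Macaulay and automatically satisfies $(S_2)$; the task reduces to verifying $(R_1)$, i.e.\ that the singular locus has codimension at least two in $\mathrm{Spec}(R_G(d))$.

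I would verify $(R_1)$ by induction on $|V(G)|$, establishing the slightly stronger statement that $R_G(d)$ is normal whenever $d\ge \Delta(G)+1$ (this flexibility is needed because removing a leaf may decrease $\Delta$). Let $v$ be a leaf with edge $e=\{v,w\}$, and set $G':=G\setminus\{v\}$. Then
$$R_G(d) \;=\; R_{G'}(d)[\,y_{v1},\dots,y_{vd}\,]\,\big/\,(f_e).$$
By induction, $R_{G'}(d)$ is normal, hence so is the polynomial extension $A:=R_{G'}(d)[y_{v1},\dots,y_{vd}]$. Since $L_G(d)$ is a complete intersection, $f_e$ is a regular element of $A$, making $R_G(d)$ a hypersurface in a normal ring. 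Applying Serre's criterion to this hypersurface reduces normality of $R_G(d)$ to two codimension bounds inside $R_G(d)$: (a) the preimage of $\mathrm{Sing}(A)$ in $V(f_e)$ has codimension $\ge 2$, and (b) the common zero locus of the partials $\partial f_e/\partial y_{vk}=y_{wk}$ and $\partial f_e/\partial y_{wk}=y_{vk}$, intersected with $V(f_e)$, has codimension $\ge 2$.

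Bound (b) is straightforward: this locus is $\{y_{vk}=y_{wk}=0\text{ for all }k\}\cap V(f_e)$, which has codimension $2d-1\ge 2(\Delta(G)+1)-1\ge 3$ in $R_G(d)$. The real obstacle is (a). Since $\mathrm{Sing}(A)=\mathrm{Sing}(R_{G'}(d))\times \mathbb{A}^d_{y_v}$, the problematic case arises when an irreducible component of $\mathrm{Sing}(R_{G'}(d))$ lies inside $V(y_{w1},\dots,y_{wd})$: its product with $\mathbb{A}^d_{y_v}$ is then contained in $V(f_e)$, and its codimension in $R_G(d)$ drops by one relative to $A$. To handle this I would strengthen the inductive hypothesis to: \emph{every component of $\mathrm{Sing}(R_{G'}(d))$ contained in $V(y_{w1},\dots,y_{wd})$ has codimension at least three in $R_{G'}(d)$.} This transversality statement is the crux of the argument; one proves it by a careful analysis of the Jacobian of $L_{G'}(d)$ restricted to $V(y_{w1},\dots,y_{wd})$, using that the $d$ conditions $y_{wk}=0$ cut out a codimension-$d$ subvariety and that the slack $d\ge \Delta(G')+1$ (rather than equality) is precisely what offsets the codimension drop caused by pulling back through the hypersurface $V(f_e)$.
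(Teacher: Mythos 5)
Your setup coincides with the paper's: Serre's criterion, $(S_2)$ from the complete-intersection (hence Cohen--Macaulay) property, and induction on the number of vertices by deleting a leaf $v$ with neighbor $w$, writing $R_G(d)=R_{G'}(d)[y_{v1},\dots,y_{vd}]/(f_e)$. The divergence, and the problem, is in how $(R_1)$ is verified. You correctly isolate the dangerous case --- a component of the non-regular locus sitting inside $V(y_{w1},\dots,y_{wd})$ --- but the statement you need there (\emph{every such component has codimension at least three in $R_{G'}(d)$}) is exactly the crux, and you do not prove it: ``a careful analysis of the Jacobian'' is a placeholder for the entire difficulty. The heuristic you offer for why it should hold is also unreliable: the ``slack'' $d\ge\Delta(G')+1$ is an equality whenever $G'$ still contains a vertex of maximal degree of $G$, which is the generic situation (and the situation in the star and path examples where the paper computes nontrivial divisor class groups, showing that height-one primes of the form $I_h$ genuinely occur). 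A further technical worry: since $A=R_{G'}(d)[y_{v*}]$ is normal but not regular, the singular locus of the hypersurface $A/(f_e)$ is not captured by ``$\mathrm{Sing}(A)\cap V(f_e)$ together with the vanishing of the partials of $f_e$''; the partial $\partial f_e/\partial y_{wk}$ is not meaningful over $A$ because $y_{wk}$ is an element of the quotient $R_{G'}(d)$, not a free variable, so one must either work with the full Jacobian of $L_G(d)$ in $S$ or argue directly with $f_e\in\mathfrak{p}^2A_\mathfrak{p}$.

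For comparison, the paper resolves this case concretely rather than by a transversality estimate. Given a height-one prime $P$ of $R_G(d)$: if some variable $y_{hd}$ of the neighbor $h$ of the leaf is outside $P$, Lemma \ref{RDcong} identifies $R_G(d)_{y_{hd}}$ with a localized polynomial extension of $R_{G'}(d)$, and induction finishes. If \emph{all} variables of $h$ lie in $P$, then $I_h\subseteq P$ is prime of height $d-\delta(h)$, which forces $P=I_h$ and $\delta(h)=\Delta(G)$; one then either pivots to a different leaf whose neighbor has a variable outside $P$, or, when no such leaf exists, $G$ is a star and an explicit Cramer's-rule (adjugate) computation shows $PR_G(d)_P$ is principal, i.e.\ $R_G(d)_P$ is a DVR. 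That last computation is the content your proposal is missing: without it (or a genuine proof of your codimension-three claim), the argument does not close.
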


Furthermore, we determine  the divisor class group of two families of trees: the \textit{star} (Proposition \ref{ThmDCGStar}) and \textit{path} (Proposition \ref{ThmDCGPath}) graphs. In particular, these examples show that Theorem \ref{ThmB}  is sharp at least for forests. This study, together with some experimental data led us to formulate the next conjecture:

\begin{conjecture}
    Let $G$ be a forest  and $d = \Delta(G) +1$. The divisor class group of $R_G(d)$ is
    $$\Cl(R_G(d)) \cong \mathbb{Z}^m,$$
    where $m = |\{v \in V : \delta(v) = \Delta(G)\}|$, with $\delta(v)$ the degree of the vertex $v$ in $G$.
\end{conjecture}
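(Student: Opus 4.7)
The strategy is induction on $|V(G)|$, reducing $G$ by removing a leaf and controlling $\Cl$ via Nagata's theorem. First, since $R_{G_1 \sqcup G_2}(d) \cong R_{G_1}(d) \otimes_{\mathbb{K}} R_{G_2}(d)$ and every component $G_i$ with $\Delta(G_i) < \Delta(G)$ satisfies $d \geq \Delta(G_i) + 2 = \pmd(G_i) + k(G_i) + 1$ (because $\pmd = \Delta$ and $k = 1$ on forests), Theorem~\ref{ThmB} makes $R_{G_i}(d)$ a UFD. Combined with the standard additivity of class groups for such tensor products over a field, this reduces the problem to the case of a single tree $T$ with $\Delta(T) = \Delta(G)$. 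The base case of the induction is the star $T = K_{1,n}$, handled by Proposition~\ref{ThmDCGStar}.

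For the inductive step, pick any leaf $\ell$ of $T$, let $v$ be its unique neighbor, and set $T' = T - \ell$ and $f = y_{v,1}$. The LSS relation $\sum_k y_{v,k} y_{\ell,k} = 0$ lets us solve for $y_{\ell,1}$ once $f$ is inverted, giving
\[
R_T(d)[f^{-1}] \;\cong\; R_{T'}(d)[f^{-1}]\bigl[\, y_{\ell,2}, \ldots, y_{\ell,d}\,\bigr],
\]
so $\Cl(R_T(d)[f^{-1}]) \cong \Cl(R_{T'}(d)[f^{-1}])$. Nagata's theorem then yields compatible presentations
\[
\Cl(R_T(d))\big/\langle\,[P] : P\in\Min(y_{v,1})\subset R_T(d)\,\rangle \;\cong\; \Cl(R_{T'}(d))\big/\langle\,[Q] : Q\in\Min(y_{v,1})\subset R_{T'}(d)\,\rangle,
\]
so the inductive step reduces to comparing the minimal primes of $(y_{v,1})$ on both sides, then invoking the inductive hypothesis for $T'$ (noting that $\delta_{T'}(v) = \delta_T(v) - 1$).

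The heart of the argument, and the main obstacle, is the following claim: $R_T(d)/(y_{v,1})$ has exactly two minimal primes when $\delta_T(v) = \Delta(T)$ and is a domain when $\delta_T(v) < \Delta(T)$. Geometrically, this asks for the number of irreducible components of a ``mixed-dimension'' LSS variety in which $\vec y_v$ is forced to lie in $\mathbb{K}^{d-1}$ while the other vectors remain in $\mathbb{K}^d$; one expects an analogue of Theorem~1.5 of \cite{Conca_2019} where the threshold becomes $d-1 > \delta_T(v)$. When $v$ is max-degree in $T$ this threshold fails by one, and the two components should be the closure of the open set $\{\vec y_v \neq 0\}$ and the linear subspace $\{\vec y_v = 0\}$, with classes $[P_1], [P_2]$ satisfying $[P_1] + [P_2] = 0$ in $\Cl(R_T(d))$ and $[P_1]$ of infinite order. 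Granting this, the inductive step produces a short exact sequence $0 \to \mathbb{Z} \to \Cl(R_T(d)) \to \Cl(R_{T'}(d)) \to 0$ when $v$ is max-degree in $T$ (so $m(T) = m(T')+1$), and an isomorphism otherwise (so $m(T) = m(T')$); in the former case the sequence splits because $\Cl(R_{T'}(d)) \cong \mathbb{Z}^{m(T')}$ is free by induction, yielding $\Cl(R_T(d)) \cong \mathbb{Z}^{m(T)}$. Establishing the mixed-dimension component count, and in particular the non-torsion and $\mathbb{Z}$-independence of the resulting classes across the whole induction, is the principal technical hurdle; it will likely require a positive-matching-decomposition argument adapted to the constraint $\vec y_v \in \mathbb{K}^{d-1}$, in the spirit of \cite{Conca_2019}.
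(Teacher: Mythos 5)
First, be aware that the statement you are addressing is posed in the paper as a \emph{conjecture}: the paper proves it only for stars (Proposition \ref{ThmDCGStar}) and paths (Proposition \ref{ThmDCGPath}), so there is no complete argument of record to compare yours against. Your outline is consistent with those two computations, and its skeleton (reduce to a single tree, invert variables attached to max-degree vertices, apply Nagata, find two minimal primes per such vertex with $[P]+[Q]=0$) is the natural extrapolation of what the paper does. But as written it is not a proof, and the step you defer is precisely the open content of the conjecture rather than a routine verification. Concretely: (i) the claim that $R_T(d)/(y_{v,1})$ is a domain when $\delta(v)<\Delta(T)$ and has exactly two minimal primes when $\delta(v)=\Delta(T)$ requires, in the second case, proving that the closure of $\{\vec{y}_v\neq 0\}$ is irreducible. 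In the paper this is done via Herzog's example \cite{Herzog_1974} and Huneke's criterion \cite{Huneke_1981}, and the arguments there exploit the very special shape of the residual generators for stars and paths (they form an LSS ideal in variables disjoint from the Herzog-type generators, or can be made into a regular sequence modulo them by an explicit weight vector). For a general tree the residual forest $T\setminus v$ shares variables with the Herzog-type component in a way not covered by any result in the paper. (ii) Non-torsion: the exact sequence of Theorem \ref{NagataThm} only shows $U$ is cyclic, generated by $[P_1]=-[P_2]$; to conclude $U\cong\mathbb{Z}$ you must show $[P_1]$ has infinite order. The paper gets this in its two cases only because Corollary \ref{Nagata2} supplies a \emph{complete} presentation of $\Cl$ (all syzygies, not just generators), which your step-by-step exact sequence does not automatically provide. (iii) For the map onto $\Cl(R_{T'}(d))$ to be the identity on the quotient you need the minimal primes of $(y_{v,1})$ in $R_{T'}(d)$ to have trivial class, which again rests on the unproven primality in (i). Finally, the ``standard additivity'' of class groups across connected components and the splitting of the tensor product deserve at least a sentence of justification.

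A smaller but genuine problem: the conjecture as literally stated fails when $\Delta(G)=1$. For a single edge and $d=2$ the ring $R_G(2)$ is the rank-four quadric cone, whose divisor class group is $\mathbb{Z}$, while $m=2$ (this is also the $n=2$ instance of Proposition \ref{ThmDCGStar}). Hence your bookkeeping $m(T)=m(T')+1$ whenever the deleted leaf's neighbour has maximal degree cannot hold uniformly, and any induction must exclude, or treat separately, trees in which leaves themselves attain the maximal degree. In summary: the strategy is reasonable and matches the evidence, but the component count for a general tree, the primality of the Herzog-type component, and the torsion-freeness of the resulting classes are all left unestablished, and these are exactly the points that keep the statement a conjecture.
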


We give an overview on how the paper is organized. In Section \ref{SectNotation} we will introduce general notation and some   generalities. In Section \ref{Sec2} we will focus on the strong $F$-regularity property, concluding with the proof of Theorem \ref{ThmA}. In Section \ref{Sec3} we will discuss factoriality and prove Theorem \ref{ThmB}. The last section (\ref{Sec4}) will be dedicated to proving Theorem \ref{ThmC} and computing the divisor class groups for the \textit{star} and \textit{path} graphs.

\vspace{1cm}

\textbf{Acknowledgments:} The author would like to thank Aldo Conca for his continued support and guidance throughout this project

\section{Notations and preliminaries}
\label{SectNotation}
In this section we will establish the notation used throughout the paper as well as some known results on graphs, Gr\"obner basis and LSS ideals that are relevant for further discussion. We will also state and prove a theorem regarding a special localization of the LSS ring.

\subsection{Graph theory}
\label{GraphT}
We will always consider $G = (V,E)$ to be a simple graph on a finite vertex set $V$ which will usually be assumed to be $V = [n] = \{1,\dots, n\}$. We will denote by $\delta(i)$ the degree of the vertex $i$ and by $\Delta(G)$ the degree of the graph $G$. A subgraph $G' = (V', E')$ of $G$ is a graph such that $V' \subseteq V$ and $E' \subseteq E$. The subgraph induced by a set of vertices $U\subseteq V$ is the subgraph $H = (U, \{\{i,j\} \in E \hspace{2pt} | \hspace{2pt} i,j \in U\})$. Let $U\subset V$, we denote by $G\backslash U$ the induced subgraph of $G$ on $V\backslash U$. We denote with $G_i$ the subgraph of $G$ obtained by eliminating from $G$ the $i$-th vertex and all of its adjacent edges, that is the same as $G_i = G\backslash \{i\}$.

We will denote $K_n$ the complete graph on $n$ vertices, that is $K_n = ([n], \{\{i,j\} \hspace{2pt} | \hspace{2pt} 1\leq i < j \leq n\})$ and $K_{m,n}$ the complete bipartite graph, which is the graph $K_{m,n} = ([m]\cup[\tilde n], \{\{i,\tilde j\} \hspace{2pt} | \hspace{2pt} i\in [m] \text{ and } j \in [\tilde n]\})$.

We recall the definition of degeneracy of a graph as it is an invariant that will be useful in future sections. A graph $G$ is said to be $k$-degenerate, for $k$ a non-negative integer, if every induced subgraph of $G$ has a vertex of degree at most $k$. Notice that all graphs are $\Delta(G)$-degenerate. The degeneracy of the graph $G$, denoted $k(G)$, is the smallest value $k$ for which the graph is $k$-degenerate. It is possible to calculate algorithmically $k(G)$ for any graph $G$ by repeatedly removing the vertex with smaller degree. The degeneracy is given by the highest degree of any vertex at the time of its removal.

\subsection{Gr\"obner Bases Theory}
 In this subsection we will describe the basics of Gr\"obner bases theory as well as mention some useful results. For further information on this topic the reader can look at \cite{BWCRV-GB} and \cite{KR}.
 Consider the polynomial ring $S = \mathbb{K}[x_1, \dots, x_n]$ and let $x^\alpha = x_1^{\alpha_1} x_2^{\alpha_2}\dots x_n^{\alpha_n}$ be a monomial in $S$. A term order $\prec$ is a total order on the set of monomials $x^\alpha$ such that
 \begin{enumerate}
     \item $1 \prec x^{\alpha}$,
     \item if $x^{\alpha} \prec x^{\beta}$ then $x^{\alpha +\gamma} \prec x^{\beta +\gamma}$ for all 
     $x^{\gamma}$.
 \end{enumerate}
Let $f$ be a non-zero polynomial in $S$, we denote by $\ini_{\prec}(f)$ its largest monomial under the term order $\prec$. Let $I = (f_1, \dots, f_t)$ be an ideal of $S$, we define its initial ideal under $\prec$ as the ideal
$$\ini_{\prec}(I) := ( \ini_{\prec}(f) \hspace{2pt} | \hspace{2pt} f \in I\backslash\{0\}).$$
A Gr\"obner basis $\mathcal{G}$ of $I$ with respect to $\prec$ is a set of polynomials $g_1, \dots, g_s \in I$ such that
$$\ini_{\prec}(I) = (\ini_{\prec}(g_1), \dots, \ini_{\prec}(g_s)).$$

It is also possible to introduce Gr\"obner bases theory using a weight vector instead of a term order. This way of defining Gr\"obner bases actually generalizes the term order setting. Let $\omega = (\omega_1, \dots, \omega_n) \in \mathbb{R}^n$ be a weight vector and let 
$$ f = \sum \limits_{\alpha \in \mathbb{N}^n} a_{\alpha} x^{\alpha}$$
be a non-zero polynomial in $S$. We define 
$$m_{\omega}(f) = \max \limits_{a_\alpha \neq 0} \{\alpha \cdot \omega\}.$$
 
The initial form of $f$ with respect to $\omega$ is the following polynomial
$$\ini_{\omega}(f) = \sum\limits_{\alpha \cdot \omega = m_{\omega}(f)} a_{\alpha} x^{\alpha},$$
and if $I$ is an ideal, the initial ideal with respect to $\omega$ is 
$$\ini_{\omega}(I) = ( \ini_{\omega}(f) \hspace{2pt} | \hspace{2pt} f \in I\backslash\{0\}).$$
Note that $\ini_{\omega}(I)$ is not, in general, a monomial ideal but it is homogeneous with respect to the graded structure induced on $S$ by $\omega$.

A Gr\"obner basis with respect to a weight vector $\omega$ is defined analogously to the term order definition.
It is known that for every ideal $I$ and for every term order $\prec$ there exists a weight vector $\omega$ such that $\ini_{\prec}(I) = \ini_{\omega}(I)$ \cite[Lem.1.5.6.]{BWCRV-GB}. Then, we will state all results in terms of $\omega$ but they are also true for a term order $\prec$. We present some properties of the initial ideals next.

\begin{proposition}\cite[Prop.1.6.2.]{BWCRV-GB}
\label{GBProp}
Let $S = \mathbb{K}[x_1, \dots, x_n]$ be the polynomial ring on a field $\mathbb{K}$, $I$ be an homogeneous ideal of $S$ and let $\omega \in \mathbb{R}^n$ be a weight vector. We have the following assertions:
\begin{enumerate}
    \item If $\ini_{\omega}(I)$ is radical, then $I$ is radical,
    \item If $\ini_{\omega}(I)$ is complete intersection, then $I$ is complete intersection,
    \item If $\ini_{\omega}(I)$ is prime, then $I$ is prime.
\end{enumerate}
\end{proposition}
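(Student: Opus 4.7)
The plan is to exploit the flat degeneration of $S/I$ to $S/\ini_\omega(I)$. Introduce an auxiliary variable $t$ and form the $\omega$-homogenization $\tilde I\subseteq S[t]$ generated by the polynomials
\[
\tilde f \;=\; t^{m_\omega(f)}\, f(t^{-\omega_1}x_1,\dots, t^{-\omega_n}x_n),\qquad f\in I.
\]
The key structural facts are that $S[t]/\tilde I$ is $\mathbb{K}[t]$-flat, that the specialization $t\mapsto 0$ yields $S/\ini_\omega(I)$, and that inverting $t$ recovers $S/I$ up to the change of coordinates $x_i\mapsto t^{\omega_i}x_i$. Each assertion is then proved by pulling the property from the special fibre to the total ring and pushing it to the generic fibre.

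For (1) and (3), I would use the division algorithm against a Gr\"obner basis of $I$. Invoking the cited refinement \cite[Lem.~1.5.6]{BWCRV-GB}, replace $\omega$ by a term order $\prec$ with $\ini_\prec(I)=\ini_\prec(\ini_\omega(I))$; this produces for each $f\in S$ a normal form $r$ with $f\equiv r\pmod I$ and either $r=0$ or $\ini_\omega(r)\notin \ini_\omega(I)$. For radicality: if $f\in\sqrt I$ with remainder $r$, then $r^k\in I$ for some $k$, so $\ini_\omega(r)^k=\ini_\omega(r^k)\in \ini_\omega(I)$; radicality of $\ini_\omega(I)$ forces $\ini_\omega(r)\in \ini_\omega(I)$, hence $r=0$ and $f\in I$. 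For primeness: if $fg\in I$ with normal forms $f',g'$, then $\ini_\omega(f')\ini_\omega(g')=\ini_\omega(f'g')\in \ini_\omega(I)$, and primeness of $\ini_\omega(I)$ forces one of $f',g'$ to vanish.

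For (2) I would count generators against height. A Gr\"obner basis $g_1,\dots,g_s$ of $I$ has $\ini_\omega(g_1),\dots,\ini_\omega(g_s)$ generating $\ini_\omega(I)$; choosing a minimal subset with the same property gives $\mu=\hgt \ini_\omega(I)$ elements whose initial forms generate $\ini_\omega(I)$, and a standard normal-form argument shows that the corresponding $g_i$'s already generate $I$. Combined with $\hgt I=\hgt \ini_\omega(I)$, which follows from equality of Hilbert series under flat degeneration, this forces those $\mu$ elements to form a regular sequence. The main technical delicacy---really the only nonroutine step---is ensuring that the division-algorithm argument is valid for a genuine weight vector $\omega$ (where $\ini_\omega$ outputs a polynomial rather than a monomial); this is dispatched by the refinement lemma together with the observation that the auxiliary ring $S[t]/\tilde I$ inherits a grading under which $t$ is a homogeneous nonzerodivisor, so that reducedness, the domain property, and Krull dimension each transfer cleanly from the $t=0$ fibre to $S/I$.
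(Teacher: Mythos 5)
The paper does not prove this statement; it is quoted verbatim from \cite[Prop.~1.6.2]{BWCRV-GB}, so there is no in-paper argument to compare against. Your proposal is the standard proof and is correct in outline: the normal-form arguments for (1) and (3) and the generator-count-versus-height argument for (2) all go through. A few technical points deserve explicit care if this were written out in full. First, the homogenization $\tilde f = t^{m_\omega(f)}f(t^{-\omega_1}x_1,\dots,t^{-\omega_n}x_n)$ only makes sense for integral (or after clearing denominators, rational) weights, and one must take $\tilde I$ to be generated by the homogenizations of \emph{all} elements of $I$, as you do, since homogenizing only a generating set can give a strictly smaller, non-saturated ideal. Second, the existence of a term order $\prec$ refining $\omega$ (so that $\ini_\prec(r)=\ini_\prec(\ini_\omega(r))$ and $\ini_\prec(I)=\ini_\prec(\ini_\omega(I))$) requires $\omega$ to have nonnegative entries; for arbitrary $\omega\in\mathbb{R}^n$ this is repaired by adding a large multiple of $(1,\dots,1)$, which changes no initial forms precisely because $I$ is assumed homogeneous --- this is where the homogeneity hypothesis enters and is worth saying. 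Third, in (2), to extract $\mu=\hgt\ini_\omega(I)$ elements of $I$ whose initial forms minimally generate $\ini_\omega(I)$ you should take the $g_i$ homogeneous for the standard grading so that graded Nakayama applies; the conclusion that $\hgt I$ many generators form a regular sequence then uses that $S$ is Cohen--Macaulay. None of these is a gap in the ideas, only in the bookkeeping.
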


\begin{proposition}\cite[Prop.1.5.2.]{BWCRV-GB}
\label{GBPropRS2}    
    Let $S = \mathbb{K}[x_1, \dots, x_n]$ be the polynomial ring on a field $\mathbb{K}$, $I$ an homogeneous ideal of $S$ and $g_1, \dots g_c \in S$ homogeneous polynomials. Let $\omega$ be a weight vector. If $\ini_\omega(g_1), \ini_\omega(g_2), \dots, \ini_\omega(g_c)$ is a regular sequence in $\large \sfrac{S}{\ini_\omega(I)}$, then $g_1, \dots, g_c$ is a regular sequence on $S/I$.
    Moreover, 
    $$\ini_\omega(I + (g_1, \dots, g_c)) = \ini_\omega(I) + (\ini_\omega(g_1), \dots, \ini_\omega(g_c))$$
\end{proposition}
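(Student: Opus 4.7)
The plan is to play two standard facts off each other: (a) the inclusion of initial ideals, and (b) the preservation of Hilbert series under $\ini_\omega$ for homogeneous ideals (which for a weight vector follows from the flat Gröbner degeneration $S[t]/\mathrm{hom}_\omega(I)$ with generic fiber $S/I$ and special fiber $S/\ini_\omega(I)$). Together with the regular-sequence hypothesis they will force equality in every estimate in sight, yielding both the regular-sequence conclusion and the identification of initial ideals.

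First I would observe the easy containment
\[
J := \ini_\omega(I) + \bigl(\ini_\omega(g_1),\dots,\ini_\omega(g_c)\bigr) \;\subseteq\; \ini_\omega\bigl(I + (g_1,\dots,g_c)\bigr) =: J',
\]
which is immediate because $\ini_\omega(f) \in J'$ for every $f \in I$ and $\ini_\omega(g_i) \in J'$ by taking $f = g_i$. Writing $d_i = \deg g_i$, the regular-sequence hypothesis gives
\[
\mathrm{HS}(S/J) \;=\; \mathrm{HS}(S/\ini_\omega(I)) \cdot \prod_{i=1}^{c}(1-t^{d_i}).
\]

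Next I would invoke the general inequality that holds for any homogeneous $I$ and any homogeneous $g_1,\dots,g_c$:
\[
\mathrm{HS}\bigl(S/(I+(g_1,\dots,g_c))\bigr) \;\geq\; \mathrm{HS}(S/I) \cdot \prod_{i=1}^{c}(1-t^{d_i}),
\]
coefficient-wise, with equality if and only if $g_1,\dots,g_c$ is a regular sequence on $S/I$ (this follows from an induction on $c$ using the exact sequence $0 \to (S/I_{i-1})(-d_i) \xrightarrow{g_i} S/I_{i-1} \to S/I_i \to 0$, which is exact on the left precisely when $g_i$ is a nonzerodivisor on $S/I_{i-1}$). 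Combining everything with the Hilbert-series preservation $\mathrm{HS}(S/I) = \mathrm{HS}(S/\ini_\omega(I))$ and $\mathrm{HS}(S/(I+(g_1,\dots,g_c))) = \mathrm{HS}(S/J')$, I get the chain
\[
\mathrm{HS}(S/J) = \mathrm{HS}(S/\ini_\omega(I)) \cdot \prod(1-t^{d_i}) \leq \mathrm{HS}(S/(I+(g_1,\dots,g_c))) = \mathrm{HS}(S/J') \leq \mathrm{HS}(S/J),
\]
where the last inequality uses the containment $J \subseteq J'$. Thus equality holds throughout: the middle equality forces the regular-sequence conclusion for $g_1,\dots,g_c$ on $S/I$, and $\mathrm{HS}(S/J)=\mathrm{HS}(S/J')$ together with $J \subseteq J'$ forces $J = J'$, which is the displayed identity of initial ideals.

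The main obstacle is the Hilbert-series preservation $\mathrm{HS}(S/I) = \mathrm{HS}(S/\ini_\omega(I))$: for a monomial term order this is the classical leading-term-basis argument, but for a general weight vector one must pass to the homogenization $\widetilde{I} \subseteq S[t]$ and verify that $S[t]/\widetilde{I}$ is $\mathbb{K}[t]$-flat, so that its fibers at $t=1$ and $t=0$, namely $S/I$ and $S/\ini_\omega(I)$, have equal Hilbert functions in each degree. Once this is in place, the argument above is entirely formal.
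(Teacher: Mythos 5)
This proposition is quoted from the reference \cite{BWCRV-GB} and the paper gives no proof of it, so there is nothing internal to compare against; your argument is the standard Hilbert-series proof of this statement, and its overall architecture is sound: the containment $J\subseteq J'$, the preservation of Hilbert series under $\ini_\omega$ of a (standard-graded) homogeneous ideal via the flat degeneration over $\mathbb{K}[t]$, and a squeeze forcing equality everywhere.

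The one step that does not hold up as written is the parenthetical justification of the inequality
\[
\mathrm{HS}\bigl(S/(I+(g_1,\dots,g_c))\bigr) \;\geq\; \mathrm{HS}(S/I)\cdot \prod_{i=1}^{c}(1-t^{d_i})
\]
``by induction on $c$ using the exact sequences.'' For $c=1$ the exact sequence gives the precise identity $\mathrm{HS}(S/(I+(g_1))) = (1-t^{d_1})\,\mathrm{HS}(S/I) + t^{d_1}\,\mathrm{HS}\bigl(0:_{S/I}g_1\bigr)$, which yields both the inequality and the equality criterion. But for $c\geq 2$ the inductive step as you describe it requires multiplying the coefficientwise inequality $\mathrm{HS}(S/I_{c-1}) \geq \mathrm{HS}(S/I)\prod_{i<c}(1-t^{d_i})$ by $1-t^{d_c}$, which has a negative coefficient, so coefficientwise inequalities are not preserved. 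The multi-element inequality is in fact true --- it amounts to the nonnegativity of the partial Euler characteristics of the Koszul complex, a theorem of Serre --- but it is not ``entirely formal,'' and the direction ``equality forces a regular sequence'' for $c\geq 2$ likewise needs the inequality to be available at every intermediate stage.

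The clean repair is to induct on $c$ on the whole proposition rather than on the auxiliary inequality. Assuming both conclusions for $g_1,\dots,g_{c-1}$, set $I'=I+(g_1,\dots,g_{c-1})$; the inductive identity $\ini_\omega(I')=\ini_\omega(I)+(\ini_\omega(g_1),\dots,\ini_\omega(g_{c-1}))$ says that the hypothesis on $\ini_\omega(g_c)$ is exactly that it is a nonzerodivisor on $S/\ini_\omega(I')$, and you then run your squeeze for the single pair $(I',g_c)$, where your $c=1$ argument is complete. This uses only the one-element exact sequence and delivers both the regularity of $g_c$ on $S/I'$ and $\ini_\omega(I'+(g_c))=\ini_\omega(I')+(\ini_\omega(g_c))$, closing the induction.
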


\subsection{Known results on LSS ideals}
In this subsection we will present some known results on  LSS  ideals. To that end we give a formal definition of the positive matching decomposition number of a graph $G$ and show its connection to algebraic properties of the associated  LSS  ideal. Furthermore we exhibit how these algebraic properties are also linked to the degeneracy of $G$.

The next theorem explains the stabilization of certain algebraic properties as $d$ increases as well as the closure of such properties under taking subgraphs of a graph $G$.

\begin{theorem}\cite{Conca_2019}
\label{CWStable}
  Let $G = (V,E)$ be a graph. Then,
  \begin{enumerate}
      \item \label{part1}If $L_G(d)$ is prime then $L_G(d)$ is a complete intersection.
      \item \label{part2}If $L_G(d)$ is a complete intersection then $L_G(d+1)$ is prime.
      \item If $L_G(d)$ is prime (respectively complete intersection) then $L_{G'}(d)$ is prime (respectively complete intersection) for every subgraph $G'$ of $G$.
  \end{enumerate}
\end{theorem}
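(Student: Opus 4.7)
I address the three items from easiest to hardest, exploiting the Cohen--Macaulay structure of $S$ and the Gr\"obner degeneration tools of Propositions~\ref{GBProp} and~\ref{GBPropRS2}. For \emph{part~(3), CI case}: if $L_G(d)$ is a complete intersection then $\{f_e\}_{e\in E}$ is a homogeneous regular sequence in $S$, and since any subset of a homogeneous regular sequence in a Cohen--Macaulay ring remains regular, the generators of $L_{G'}(d)$ form a regular sequence, whence $L_{G'}(d)$ is CI.

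For \emph{part~(1), prime $\Rightarrow$ CI}: since $L_G(d)$ has $|E|$ generators, $\hgt L_G(d)\le|E|$, and the content is the reverse inequality, equivalently $\dim R_G(d)=nd-|E|$. By primality, $V(L_G(d))$ is irreducible and its smooth locus is open and dense, so the dimension equals the corank of the Jacobian of $(f_e)_{e\in E}$ at any smooth point. The Jacobian row for $e=\{i,j\}$ has $v_j$ in the $i$-th block of $d$ coordinates and $v_i$ in the $j$-th; I would then produce an orthogonal representation $(v_i)\in V(L_G(d))$ for which these $|E|$ rows are linearly independent, yielding $\dim V(L_G(d))\le nd-|E|$. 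This is the main obstacle of the whole theorem: a prime ideal with $|E|$ generators need not be CI in general, so this step genuinely uses the bilinear form of the $f_e$'s.

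For \emph{part~(2), CI at $d$ $\Rightarrow$ prime at $d{+}1$}: I would use a Gr\"obner degeneration with a weight $\omega$ that privileges the new variables $y_{i,d+1}$, reducing the problem to primality of $\ini_\omega L_G(d+1)$; primality then lifts to $L_G(d+1)$ via Proposition~\ref{GBProp}(3). Proposition~\ref{GBPropRS2}, applied to the regular sequence provided by the CI hypothesis at $d$, allows the initial ideal to be described as $L_G(d)$ extended by a polynomial ring in the new variables (up to controlled linear terms), at which point primality becomes tractable because the new variables create enough free directions to collapse any would-be extra component.

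For \emph{part~(3), prime case}: reduce by induction to deletion of a single vertex or a single edge. For vertex removal, use $R_G(d)/(y_{v,1},\ldots,y_{v,d}) \cong R_{G\setminus v}(d)$ and, invoking part~(1), exploit the Cohen--Macaulayness of $R_G(d)$ together with the $\mathbb{Z}^n$-grading $\deg y_{ik}=e_i$ to show that the quotient by the regular sequence $(y_{v,k})_{k=1}^d$ remains a domain. For edge removal, the surjection $R_{G\setminus e}(d)\twoheadrightarrow R_G(d)$ has kernel $(f_e)$; a dimension count using part~(1) identifies $f_e$ as a nonzerodivisor modulo a prime, and primality of $L_{G\setminus e}(d)$ then follows by chasing the initial ideal together with Proposition~\ref{GBProp}(3).
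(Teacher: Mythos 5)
This theorem is quoted from \cite{Conca_2019}; the paper under review gives no proof of its own, so your sketch has to stand on its own. The one piece that is complete and correct is the complete-intersection half of (3): homogeneous regular sequences of positive degree are permutable, so any subset of $\{f_e\}_{e\in E}$ is again a regular sequence. Everything else has a genuine gap. In (1) the entire content is concentrated in the sentence ``I would then produce an orthogonal representation for which these $|E|$ rows are linearly independent'': over an algebraically closed field of characteristic $0$, the existence of a point of $V(L_G(d))$ where the Jacobian of the generators of the prime (hence radical) ideal $L_G(d)$ has rank $|E|$ is \emph{equivalent} to $\hgt L_G(d)=|E|$, so no construction means nothing has been proved (and over imperfect fields the smooth-point argument needs further care). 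In (2) neither choice of weight does what you claim: if $\omega$ privileges the new variables then $\ini_\omega(f_{ij})=y_{i,d+1}y_{j,d+1}$ and $\ini_\omega(L_G(d+1))$ contains the edge ideal of $G$, which is never prime when $E\neq\emptyset$; if $\omega$ privileges the old variables then Proposition \ref{GBPropRS2} gives $\ini_\omega(L_G(d+1))=L_G(d)S$, and Proposition \ref{GBProp}(3) would then require $L_G(d)$ itself to be prime, whereas the hypothesis is only that it is a complete intersection. So the degeneration as described proves the weaker implication ``prime at $d$ implies prime at $d+1$,'' and the mechanism by which the extra column of variables upgrades a complete intersection to a prime — the whole point of (2) — is exactly what is missing.

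The prime half of (3) is also not established, because both of your reductions point the wrong way. The isomorphisms $R_{G\setminus v}(d)\cong R_G(d)/(y_{v,1},\dots,y_{v,d})$ and $R_G(d)\cong R_{G\setminus e}(d)/(f_e)$ exhibit the ring whose integrality you need either as a quotient of a domain by a regular sequence (which is in general not a domain: take $\mathbb{K}[x,y,z]/(xy-z^2)$ modulo $z$) or as the source of a surjection onto a domain (which says nothing). For vertex deletion the fix is to reverse the arrow: the $\mathbb{Z}^n$-grading you invoke is used in Lemma \ref{LemmaInclusion} precisely to prove that $R_{G\setminus v}(d)\hookrightarrow R_G(d)$ is injective, after which ``a subring of a domain is a domain'' finishes that case. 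For edge deletion no such inclusion exists, the ``dimension count using part (1)'' is circular (it presupposes primality of $L_{G\setminus e}(d)$, which is what is being proved), and ``chasing the initial ideal'' is not an argument. Note that (1) would in fact follow by induction on $|E|$ from the edge-deletion case of (3): the multigraded piece of $L_{G\setminus e}(d)$ in degree $\mathfrak{e}_i+\mathfrak{e}_j$ is zero, so once $L_{G\setminus e}(d)$ is known to be prime, $f_e$ is a nonzerodivisor modulo it and the height climbs by one with each edge. This makes the edge-deletion case the technical core of the whole theorem, and it is the part your proposal leaves entirely open.
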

Parts \ref{part1} and \ref{part2} imply that if $L_G(d_0)$ is prime (or complete intersection) for some $d_0$ we have the property for any $d\geq d_0$. Given $G$, a natural question to ask, then, what is the smallest  $d$ (if any)  such that those properties do hold.  To this end we need to first give some definitions.

\begin{defn}
    Let $G = (V,E)$ be a graph. A positive matching of $G$ is a subset $M \subset E$ of pairwise disjoint edges of $G$ such that there exists a weight function $w \hspace{1pt} : \hspace{1pt} V \rightarrow \mathbb{R}$ satisfying:

    \begin{align*}
        \sum \limits_{i \in e} w(i) > 0 &\text{ if } e\in M,\\
        \sum_{i \in e} w(i) < 0 &\text{ if } e\in E\backslash M.
    \end{align*}

    A positive matching decomposition of $G$ is a partition of the set of edges $E = \cup_{i=1}^p E_i$ into pairwise disjoint subsets such that $E_i$ is a positive matching on the graph $(V, E\char`\\ \cup_{j=1}^{i-1} E_j)$ for $i=1, \dots , p.$
  
  The smallest $p$ for which $G$ admits a positive matching decomposition is called the positive matching decomposition number and is denoted by $\pmd(G)$.
\end{defn}

The next lemma summarizes some properties of the positive matching decomposition number.

\begin{lemma}\cite{Conca_2019}
    Let $G = ([n], E)$ be a graph, then
    \begin{enumerate}
        \item $\pmd(G) \leq \text{min}\{2n-3, |E| \},$
        \item If $G$ is bipartite then $\pmd(G) \leq \min \{n-1, |E|\}$,
        \item $\pmd(G) \geq \Delta(G)$ with equality  when $G$ is a forest.
    \end{enumerate}
\end{lemma}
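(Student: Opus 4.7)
The plan is to work with the equivalent description of a positive matching: given a graph $(V,E)$, a weight function $w : V \to \mathbb{R}$ with no edge of zero sum produces $M_w = \{e \in E : \sum_{i \in e} w(i) > 0\}$, and $M_w$ is a positive matching of $(V, E)$ precisely when it is a matching in the usual sense. Every bound in the lemma then amounts to exhibiting a sequence of weight functions.

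The lower bound $\pmd(G) \geq \Delta(G)$ in item (3) is immediate: in any decomposition $E = E_1 \sqcup \cdots \sqcup E_p$, each $E_i$ is a matching, hence contains at most one edge incident to any given vertex; taking a vertex of maximal degree forces $p \geq \Delta(G)$. The equality case for forests is more interesting, and I would combine two ingredients. First, by König's edge coloring theorem (forests are bipartite), the edges of $F$ admit a partition $E = M_1 \sqcup \cdots \sqcup M_{\Delta(F)}$ into matchings. Second, I would prove the key claim that every matching $M$ of a forest $F'$ is a positive matching of $F'$, by induction on $|V(F')|$: pick a leaf $v$ with pendant edge $e = \{v, p\}$; by induction there is a weight function $w''$ on $F' \setminus \{v\}$ realising $M \setminus \{e\}$ as a positive matching, and one extends it by setting $w(v) = 1 - w(p)$ if $e \in M$ or $w(v) = -1 - w(p)$ if $e \notin M$, so that $w(v) + w(p) = \pm 1$ has the correct sign. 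Since deleting a matching from a forest leaves a forest, iterating the claim shows $M_1, \ldots, M_{\Delta(F)}$ is a valid positive matching decomposition.

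For the upper bound $\pmd(G) \leq |E|$ in (1), I would observe that any single edge $e = \{u,v\}$ is a positive matching of any graph containing it: choose $w(u) = w(v) = 1$ and $w(x) = -N$ for every other vertex with $N$ large; then $w(u) + w(v) = 2 > 0$, while every other edge has at least one endpoint of weight $-N$ and hence strictly negative sum. Decomposing $E$ one edge at a time gives $\pmd(G) \leq |E|$. The bounds $2n-3$ in (1) and $n-1$ in (2) are the substantive ones: for these I would aim for a global construction parametrised by a total ordering $v_1 < \cdots < v_n$ of the vertices and a carefully chosen sequence of $2n-3$ (respectively $n-1$) weight vectors whose successive positive-sum edge sets tile $E$ and each form a matching in the graph obtained by peeling off the previous matchings. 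In the bipartite case, the bipartition $V = A \sqcup B$ provides additional structure that lets a shorter sequence of $n-1$ weight vectors suffice.

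The main obstacle is precisely the $2n-3$ and $n-1$ bounds: they are tight, for $K_n$ and $K_{1,n-1}$ respectively, so the argument cannot be a naive induction on $n$. Indeed, removing a single vertex $v$ contributes $\delta(v)$ extra parts and is too lossy when every vertex has large degree, as in $K_n$. The construction must therefore be global rather than greedy, and the core technical step is to verify that the successive weight functions make the prescribed matching positive while leaving all un-peeled edges negative; this is the content I would follow from the original proof in \cite{Conca_2019}.
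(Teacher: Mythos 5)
The paper itself offers no proof of this lemma --- it is quoted from \cite{Conca_2019} with a citation --- so there is no internal argument to compare against; your proposal has to stand on its own. The parts you actually prove are correct. Item (3) is complete: the lower bound $\pmd(G)\geq\Delta(G)$ follows since each part of a decomposition is a matching and so meets each vertex at most once, and your leaf-induction showing that \emph{every} matching of a forest is a positive matching (extend the weight function to the leaf $v$ by $w(v)=\pm1-w(p)$) combines correctly with K\"onig's edge-colouring theorem and the fact that deleting edges from a forest leaves a forest. The bound $\pmd(G)\leq|E|$ via single-edge positive matchings (weights $1,1,-N$) is also fine.

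The genuine gap is exactly where you say it is: the bounds $2n-3$ in (1) and $n-1$ in (2) are asserted but not proved. Announcing ``a global construction parametrised by a total ordering'' and deferring to \cite{Conca_2019} is a placeholder, not an argument, and these two inequalities are the main content of the lemma. What is missing is concrete and twofold. First, the reduction: $\pmd$ is monotone under passing to subgraphs (restrict each part $E_i$ and each witnessing weight function to $G'$; positivity and negativity of edge sums are preserved), so (1) reduces to $\pmd(K_n)\leq 2n-3$ and (2) to $\pmd(K_{a,b})\leq a+b-1$. Second, the explicit decompositions of these complete graphs: for $K_n$ one can take the anti-diagonals $E_k=\{\{i,j\}: i<j,\ i+j=k+2\}$ for $k=1,\dots,2n-3$ (distinct pairs with equal sum are disjoint, so each $E_k$ is a matching) and must then produce, for each $k$, a weight function making exactly the edges with $i+j=k+2$ positive among the surviving edges $i+j\geq k+2$ --- for instance $w(i)=c_k-t^i$ with $t>1$ sufficiently close to $1$ so that $t^i+t^j$ is ordered by $i+j$, and $c_k$ chosen to separate sum $k+2$ from sum $k+3$; the complete bipartite case is analogous with $a+b-1$ anti-diagonals. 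Without this verification the two headline inequalities remain unestablished.
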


Based on Conca and Welker's work, Farrokhi, Gharakhloo and Yazdan Pour \cite{FGYP} studied the positive matching decomposition number from a graph theoretical perspective. In their work, they compute the $\pmd$ for certain families of graphs and discuss the complexity of such computations. 
Moreover, Farrokhi wrote a computer program to compute the $\pmd$ of a graph $G$ \cite{F}. 

We now introduce a result that allow us to put together the positive matching decomposition number of a graph and the existence of a special term order that will be of some importance for proving algebraic properties of the  LSS  ideals.

\begin{lemma}
\label{LemTermOrd}
    Let $G = (V,E)$ be a graph, $d \geq \pmd(G) = p$ and $E = \cup_{l=1}^p E_l$ a positive matching decomposition. Then, there exists a term order $\prec$ such that for every component $E_l$ and every edge $\{i,j\}\in E_l$ we have
    $$\ini_{\prec} (f_{ij}) = y_{il}y_{jl}.$$
\end{lemma}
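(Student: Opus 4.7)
The plan is to produce a weight vector $\omega \in \mathbb{R}^{nd}$ on the variables $y_{ik}$ for which each monomial $y_{il}y_{jl}$ uniquely dominates the other monomials of $f_{ij}$ whenever $\{i,j\} \in E_l$. Any auxiliary term order refining $\omega$ will then provide the required $\prec$.

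For each $l \in [p]$, the definition of positive matching decomposition supplies a weight function $w_l : V \to \mathbb{R}$ witnessing that $E_l$ is a positive matching of the subgraph $(V, E \setminus \bigcup_{j<l} E_j)$, so $w_l(i)+w_l(j) > 0$ for $\{i,j\} \in E_l$ and $w_l(i)+w_l(j) < 0$ for every edge present at step $l$ but not in $E_l$. Let $\epsilon = \min_l \min_{\{i,j\}\in E_l}(w_l(i)+w_l(j)) > 0$ and $M = \max_{l,v}|w_l(v)|$; fix $N > 2M/\epsilon$ and a large constant $L$, and define
$$\omega(y_{ik}) = \begin{cases} N^{p-k}\, w_k(i), & 1 \leq k \leq p, \\ -L, & p < k \leq d. \end{cases}$$

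Fix $\{i,j\} \in E_l$; I would check that $y_{il}y_{jl}$ strictly maximizes $\omega$ among the $d$ monomials of $f_{ij}$. For $k > p$, $\omega(y_{ik}y_{jk}) = -2L$ is dominated by taking $L$ large. For $k < l$ the edge $\{i,j\}$ is present at step $k$ but not in $E_k$, so $w_k(i)+w_k(j) < 0$ and $\omega(y_{ik}y_{jk}) < 0 < \omega(y_{il}y_{jl})$. The delicate case, and the main obstacle, is $l < k \leq p$: here $w_k$ controls only edges still present at step $k$, and since $\{i,j\}$ has already been removed the sign of $w_k(i)+w_k(j)$ is uncontrolled. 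The geometric scaling $N^{p-k}$ resolves this, since $\omega(y_{il}y_{jl}) \geq N^{p-l}\epsilon$ while $|\omega(y_{ik}y_{jk})| \leq N^{p-k}\cdot 2M$, and $N^{p-l}/N^{p-k} = N^{k-l} \geq N > 2M/\epsilon$ forces the former to strictly exceed the latter. Hence $\ini_\omega(f_{ij}) = y_{il}y_{jl}$ for every $\{i,j\} \in E_l$, and refining $\omega$ by any term order (as allowed by \cite[Lem.1.5.6.]{BWCRV-GB}) yields the desired $\prec$.
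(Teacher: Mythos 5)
Your construction is correct and is essentially the constructive argument the paper alludes to (and defers to \cite{Conca_2019}): one turns the weight functions $w_l$ witnessing the positive matching decomposition into a weight vector, with the geometric scaling $N^{p-k}$ playing the role of a block/lexicographic comparison across layers so that the uncontrolled sums $w_k(i)+w_k(j)$ for $k>l$ cannot overtake the positive contribution at layer $l$. The only point worth a line of care is that $\omega$ may have negative entries, so it is not itself refinable to a term order verbatim; since every $f_e$ is homogeneous of degree $2$, replacing $\omega$ by $\omega+c\mathbf{1}$ for $c\gg 0$ makes all weights positive without changing the comparison among the monomials of the $f_e$, after which any tie-breaking term order gives the required $\prec$.
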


\begin{remark}
    Notice that as $E_l$ is a matching for each $l = 1, \dots, p$, the previous lemma implies that the initial forms of the generators $f_{ij}$ of $L_G(d)$ are pairwise coprime and square free monomials.
\end{remark}

The proof of Lemma \ref{LemTermOrd} is constructive. This construction is shown in the following example, but for a more accurate definition the reader is referred to \cite{Conca_2019}.

\begin{example}
\label{ExPMD-Termorder}
    Let $G = (V,E)$ be the following graph,
    \begin{center}
        \begin{tikzpicture}[scale= 0.43]
        \node at (0,0)[circle,fill,inner sep=1.3pt]{};
        \node at (-2,1.5)[circle,fill,inner sep=1.3pt]{};
        \node at (-2,-1.5)[circle,fill,inner sep=1.3pt]{};
        \node at (2,0)[circle,fill,inner sep=1.3pt]{};
        \draw (0,0) -- (-2,1.5) -- (-2,-1.5) -- (0,0) -- (2,0);
        \node[] at (0.1,0.5) {\footnotesize{2}};
        \node[] at (2,0.5) {\footnotesize{1}};
        \node[] at (-2,2) {\footnotesize{3}};
        \node[] at (-2,-2) {\footnotesize{4}};  
    \end{tikzpicture}
    \end{center}
    We exhibit the construction of a positive matching decomposition for $G$, with weight functions $w_1, w_2, w_3$ shown in blue, in the following figure.
    \begin{center}
    \begin{tikzpicture}[scale= 0.43]

        \node at (0,0)[circle,fill,inner sep=1.3pt]{};
        \node at (-2,1.5)[circle,fill,inner sep=1.3pt]{};
        \node at (-2,-1.5)[circle,fill,inner sep=1.3pt]{};
        \node at (2,0)[circle,fill,inner sep=1.3pt]{};
        \draw (0,0) -- (-2,1.5);
        \draw [thick, red] (-2,1.5)-- (-2,-1.5);
        \draw [] (-2,-1.5)-- (0,0);
        \draw [thick, red] (0,0)-- (2,0);
        \node[] at (0.1,0.5) {\footnotesize{2}};
        \node[] at (2,0.5) {\footnotesize{1}};
        \node[] at (-2,2) {\footnotesize{3}};
        \node[] at (-2,-2) {\footnotesize{4}}; 
        \node[] at (0.1,-0.52) {\footnotesize{\color{blue}-2}};
        \node[] at (2,-0.52) {\footnotesize{\color{blue}3}};
        \node[] at (-2.5,1.7) {\footnotesize{\color{blue}1}};
        \node[] at (-2.5,-1.7) {\footnotesize{\color{blue}1}}; 

        \node at (10,0)[circle,fill,inner sep=1.3pt]{};
        \node at (8,1.5)[circle,fill,inner sep=1.3pt]{};
        \node at (8,-1.5)[circle,fill,inner sep=1.3pt]{};
        \draw [thick, red] (10,0) -- (8,1.5);
        \draw [] (8,-1.5) -- (10,0);
        \node[] at (10.1,0.5) {\footnotesize{2}};
        \node[] at (8,2) {\footnotesize{3}};
        \node[] at (8,-2) {\footnotesize{4}}; 
        \node[] at (10.1,-0.52) {\footnotesize{\color{blue}0}};
        \node[] at (7.5,1.7) {\footnotesize{\color{blue}1}};
        \node[] at (7.5,-1.7) {\footnotesize{\color{blue}-1}}; 

        \node at (20,0)[circle,fill,inner sep=1.3pt]{};
        \node at (18,-1.5)[circle,fill,inner sep=1.3pt]{};
        \draw [thick, red] (18,-1.5) -- (20,0);
        \node[] at (20.1,0.5) {\footnotesize{2}};
        \node[] at (18,-2) {\footnotesize{4}}; 
        \node[] at (20.1,-0.52) {\footnotesize{\color{blue}0}};
        \node[] at (17.5,-1.7) {\footnotesize{\color{blue}1}}; 
    
    \end{tikzpicture}
    \end{center}
    If $E_1=\{12,34\}$, $E_2=\{23\}$ , $E_3=\{24\}$, then $E= E_1 \cup E_2 \cup E_3$ is a positive matching decomposition of $G$. Notice that this decomposition is minimal, hence $\pmd(G) = 3$.
    Let $d \geq 3$ and $S = \mathbb{K}[y_{ij} \hspace{2pt} | \hspace{2pt} i \in [4], j \in [d]]$ equipped with the term order $\prec$ described in Lemma \ref{LemTermOrd}. The LSS ideal $L_G(d)$ is generated by the following polynomials
    \begin{align*}
    f_{12} &= \underline{y_{11}y_{21}}+y_{12}y_{22}+y_{13}y_{23}+\dots + y_{1d}y_{2d},\\
    f_{23} &= y_{21}y_{31}+\underline{y_{22}y_{32}}+y_{23}y_{33}+\dots + y_{2d}y_{3d},\\
    f_{24} &= y_{21}y_{41}+y_{22}y_{42}+\underline{y_{23}y_{43}}+\dots + y_{2d}y_{4d},\\
    f_{34} &= \underline{y_{31}y_{41}}+y_{32}y_{42}+y_{33}y_{43}+\dots + y_{3d}y_{4d},
    \end{align*}
    where we underlined the leading terms under $\prec$. Notice that $\ini_{\prec}(f_{12})$, $\ini_{\prec}(f_{23})$, $\ini_{\prec}(f_{34})$, $\ini_{\prec}(f_{24})$ are pairwise coprime for $d \geq 3 = \pmd(G)$.

\end{example}

Next theorem shows a precise connection between algebraic properties of $L_G(d)$ and the $\pmd$ of its associated graph $G$.

\begin{theorem}\cite{Conca_2019}
\label{CWThm}
    For any simple graph $G =(V,E)$ we have the following.
    \begin{enumerate}
        \item If $d \geq \pmd(G)$ then the ideal $L_G(d)$ is a radical complete intersection.
        \item If $d \geq \pmd(G) +1$ then the ideal $L_G(d)$ is prime.
    \end{enumerate}
\end{theorem}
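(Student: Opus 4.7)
The plan is to leverage the positive matching decomposition / Gröbner basis machinery set up in Lemma~\ref{LemTermOrd} together with the lifting properties of initial ideals in Proposition~\ref{GBProp}.

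For part (1), I would fix a minimal PMD $E = E_1 \sqcup \cdots \sqcup E_p$ with $p = \pmd(G) \le d$ and take the term order $\prec$ provided by Lemma~\ref{LemTermOrd}, so that $\ini_\prec(f_{ij}) = y_{il}y_{jl}$ whenever $\{i,j\}\in E_l$. The key observation (already flagged in the Remark after Lemma~\ref{LemTermOrd}) is that these leading monomials are pairwise coprime: two edges inside the same matching $E_l$ use disjoint vertex labels, while edges in different matchings use different second indices $l$. By Buchberger's first criterion, pairwise coprime leading terms force all $S$-pairs to reduce to zero trivially, so $\{f_e:e\in E\}$ is a Gröbner basis and $\ini_\prec(L_G(d))$ is generated by pairwise coprime squarefree quadratic monomials. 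Such a monomial ideal is simultaneously a complete intersection and radical, so parts~(1) and~(2) of Proposition~\ref{GBProp} lift both properties back to $L_G(d)$.

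For part (2), the natural route is induction on $|E|$, using part~(1) to control the complete-intersection geometry. The base case $E=\emptyset$ gives $L_G(d)=(0)$. For the inductive step, pick a minimal PMD $E=E_1\sqcup\cdots\sqcup E_p$ of $G$, choose an edge $e\in E_p$, and set $G':=G\setminus e$. The restriction of the PMD to $G'$ remains a PMD (of length $p$ or $p-1$), so $\pmd(G')\le p\le d-1$ and the inductive hypothesis gives that $L_{G'}(d)$ is prime. Part~(1) applied to $G$ at level $d$ also ensures that $f_e$ is a nonzerodivisor modulo $L_{G'}(d)$, since the $f_e$'s form a regular sequence. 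The problem therefore reduces to showing that $f_e=\sum_{k=1}^{d}y_{ik}y_{jk}$ generates a prime ideal inside the domain $R_{G'}(d)$.

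The main obstacle is precisely this last step, and it is where the extra ``$+1$'' in the hypothesis is indispensable. The strategy I would pursue is to exploit a coordinate $k=d$ that the chosen PMD does not touch (which exists because $d\ge p+1$): under the term order of Lemma~\ref{LemTermOrd} the variables $y_{id},y_{jd}$ appear in no leading term of the Gröbner basis of $L_{G'}(d)$, so inverting a suitable element (for instance $y_{id}$) lets one use the relation $f_e=0$ to eliminate $y_{jd}$. This identifies the localization $R_G(d)[y_{id}^{-1}]$ with a localization of a polynomial extension of $R_{G'}(d)$ in the remaining variables, manifestly a domain. The delicate checks are that $y_{id}$ is a nonzerodivisor in $R_{G'}(d)$ (which requires verifying that it lies in no minimal prime, using the unmixedness supplied by part~(1)) and that primality of the localization propagates back to $L_G(d)$ itself; since $L_G(d)$ is an unmixed radical ideal of the expected codimension, the absence of embedded or extra components means primality in a single suitable localization is enough to conclude.
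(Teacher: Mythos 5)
This statement is quoted from \cite{Conca_2019}; the paper under review gives no proof of it, so the comparison is with the argument in that reference. Your part~(1) is correct and is essentially the known proof: the leading terms supplied by Lemma~\ref{LemTermOrd} are pairwise coprime squarefree quadrics, Buchberger's first criterion makes the $f_e$ a Gr\"obner basis, and Proposition~\ref{GBProp} transfers ``radical'' and ``complete intersection'' from the initial ideal back to $L_G(d)$.

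Part~(2), however, has a genuine gap at its central step. You delete an edge $e=\{i,j\}$ and claim that inverting $y_{id}$ and solving $f_e=0$ for $y_{jd}$ identifies $R_G(d)[y_{id}^{-1}]$ with a localization of a polynomial extension of $R_{G'}(d)$. It does not: the variable $y_{jd}$ occurs in every other generator $f_{jm}$ of $L_{G'}(d)$ attached to the remaining edges at $j$ (avoiding the \emph{leading} terms is not the same as avoiding the polynomials), so what you actually obtain is $R_{G'}(d)[y_{id}^{-1}]/(y_{jd}-h)$ with $h=-y_{id}^{-1}\sum_{k<d}y_{ik}y_{jk}$, and you are left having to prove that $y_{jd}-h$ generates a prime ideal of $R_{G'}(d)[y_{id}^{-1}]$ --- which is essentially the original problem. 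The elimination trick only works when \emph{all} the eliminated variables belong to a deleted \emph{vertex} and hence appear in no surviving relation; that is exactly the content of Lemma~\ref{RDcong}, where one inverts the determinant $D$ of a generic $t\times t$ block rather than a single variable, and it naturally leads to bounds involving vertex degrees/degeneracy rather than $\pmd(G)+1$. The route actually used in \cite{Conca_2019} is different: one first proves the implication ``$L_G(d)$ complete intersection $\Rightarrow$ $L_G(d+1)$ prime'' (Theorem~\ref{CWStable}(2)) and then combines it with part~(1) at level $d-1\geq\pmd(G)$. A secondary issue: even granting a domain after inverting one element, you must also show that element avoids \emph{every} minimal prime of $L_G(d)$ (not of $L_{G'}(d)$) before a single localization can certify primality; you flag this but place the check in the wrong ring.
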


There exist graphs for which $L_G(d)$ being a radical complete intersection ideal and a prime ideal is reached for the same value of $d$. In particular, the cycle graph with $6$ vertices, $C_6$, it is not either prime nor complete intersection for $d = 2$ and it is a prime complete intersection ideal when $d=3$.

Independently, Kapon found a different bound for $d$ such that the ideal $L_G(d)$ is complete intersection and its radical is prime \cite{Kapon}. We recall his result next.



\begin{theorem}\cite{Kapon}
\label{Kapon}
    Let $G = (V, E)$ be a graph with degree $\Delta(G)$ and degeneracy $k(G)$. Set $\alpha(G) = \Delta(G) + k(G)-1$. We have the next assertions:
    \begin{enumerate}
        \item If $d \geq \alpha(G) $ then $L_G(d)$ a is complete intersection.
        \item If $d \geq \alpha(G) +1$ then the variety defined by $L_G(d)$ is irreducible.
    \end{enumerate}
\end{theorem}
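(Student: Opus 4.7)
The plan is to prove part (1) by exhibiting a Gröbner basis of $L_G(d)$ whose initial forms are pairwise coprime squarefree monomials: then those initial forms automatically form a regular sequence in $S$, and Proposition~\ref{GBPropRS2} (applied with $I = (0)$) forces the generators $\{f_e\}_{e \in E}$ themselves to form a regular sequence, so $L_G(d)$ is a complete intersection. Part (2) will then follow from part (1) via Theorem~\ref{CWStable}(2).

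The construction of this Gröbner basis breaks into two steps. First, I produce a proper edge-coloring $c : E \to [d]$ of $G$ using only $\alpha(G)$ colors. Fix a reverse-degeneracy ordering $v_1, \ldots, v_n$ of $V(G)$, meaning each $v_i$ has at most $k(G)$ neighbors in $\{v_1, \ldots, v_{i-1}\}$, and process the edges in order of their larger endpoint (breaking ties arbitrarily). When coloring a backward edge $\{u, v_i\}$ of $v_i$, the color must avoid at most $k(G) - 1$ colors already assigned to other backward edges of $v_i$, together with at most $\Delta(G) - 1$ colors already assigned to edges incident to $u$. A greedy choice therefore succeeds precisely when $d \ge (k(G) - 1) + (\Delta(G) - 1) + 1 = \alpha(G)$.

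Second, I realize $c$ via a weight vector $\omega \in \mathbb{R}^{nd}$ such that $\ini_\omega(f_{\{i,j\}}) = y_{i,c(\{i,j\})}\, y_{j,c(\{i,j\})}$ for every edge $\{i,j\} \in E$. My strategy is to assign weights $\omega_{v_i, j}$ in a layered fashion along the vertex ordering, of the form $N^{\rho(v_i, j)}$ for a very large parameter $N$ and an integer exponent $\rho(v_i, j)$ designed so that, for each backward edge $\{u, v_i\}$ with color $c = c(\{u, v_i\})$, the sum $\omega_{v_i, c} + \omega_{u, c}$ strictly dominates $\omega_{v_i, j} + \omega_{u, j}$ for every $j \ne c$. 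Properness of $c$ then automatically guarantees the initial monomials across different edges are pairwise coprime.

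For part (2), if $d \ge \alpha(G) + 1$ then $d - 1 \ge \alpha(G)$, so by part (1) $L_G(d-1)$ is a complete intersection; Theorem~\ref{CWStable}(2) yields that $L_G(d)$ is prime, and in particular $V(L_G(d))$ is irreducible. The main obstacle is the weight construction in the second step: the interactions between weights at distinct vertices must be managed so that the intended color wins each comparison, despite several backward edges at a common vertex sharing that vertex's weight data. I expect to resolve this by letting the exponent $\rho$ grow rapidly enough with the vertex index that the weights at later vertices dominate those at earlier ones; each backward-edge comparison then reduces to managing the at most $k(G)$ backward colors concentrated at a single vertex, where fine-tuning within a bounded-size block becomes tractable.
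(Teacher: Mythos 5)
This is a cited result in the paper (the text only quotes it from \cite{Kapon} and gives no proof), so the comparison here is purely with the internal soundness of your argument. Your greedy proper edge-coloring with $\alpha(G)=\Delta(G)+k(G)-1$ colors is correct, and the reduction of part (2) to part (1) via Theorem~\ref{CWStable}(2) is fine. The gap is exactly at the step you flag as "the main obstacle": a proper edge coloring $c$ is in general \emph{not} realizable by a weight vector $\omega$ satisfying $\omega_{u,c(e)}+\omega_{v,c(e)}>\omega_{u,l}+\omega_{v,l}$ for every edge $e=\{u,v\}$ and every $l\neq c(e)$, and no amount of fine-tuning can fix this for the coloring your greedy step produces. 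Concretely, take $G=C_4$ with edges $12,23,34,14$ and vertex order $1,2,3,4$ (so $\Delta=2$, $k=2$, $\alpha=3$). Your procedure colors $12\mapsto 1$, $23\mapsto 2$, $34\mapsto 1$, $14\mapsto 2$. Among the required strict inequalities are
$\omega_{1,1}+\omega_{2,1}>\omega_{1,2}+\omega_{2,2}$,\
$\omega_{3,1}+\omega_{4,1}>\omega_{3,2}+\omega_{4,2}$,\
$\omega_{2,2}+\omega_{3,2}>\omega_{2,1}+\omega_{3,1}$,\
$\omega_{1,2}+\omega_{4,2}>\omega_{1,1}+\omega_{4,1}$,
and summing them gives $S>S$ for the same quantity $S$ on both sides. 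So \emph{no} weight vector, layered or otherwise, realizes this coloring; the unused third color only adds further constraints and cannot help. (This is the same obstruction showing that a perfect matching of an even cycle is never a positive matching, whence $\pmd(C_{2n})=3$.) Your layered construction also fails locally for the reason you half-anticipate: the two backward edges at vertex $4$ force $\omega_{4,1}=\omega_{4,2}$ at the dominant scale, pushing the decision down to vertices $1$ and $3$, where the demand $\omega_{3,1}>\omega_{3,2}$ coming from edge $34$ contradicts the demand $\omega_{3,2}\geq\omega_{3,1}$ coming from vertex $3$'s own backward edge $23$.

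The deeper issue is that a coloring realizable by weights in your sense is essentially what a positive matching decomposition provides (Lemma~\ref{LemTermOrd} is precisely the realizability statement for a pmd), so the minimal number of colors for which your step two can succeed is governed by $\pmd(G)$, not by $\chi'(G)$ or $\alpha(G)$. Establishing that $\alpha(G)$ colors always admit a realizable coloring would in effect prove $\pmd(G)\leq\alpha(G)$, which the paper explicitly states as an open conjecture. So your route, even if it could be completed, would require resolving that conjecture; as written it does not prove Kapon's theorem. A proof of the stated bound has to avoid initial forms altogether, e.g.\ by induction on a vertex of degree at most $k(G)$ combined with a direct regular-sequence and localization argument in the spirit of Lemma~\ref{RDcong}.
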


 Theorems \ref{CWThm} and \ref{Kapon} give an answer for the stabilization point of $L_G(d)$ being a prime and complete intersection ideal. Notice that for some families of graphs, such as forests, complete graphs and cycles the exposed bounds coincide. Nonetheless, there are numerous examples of graphs for which the $\pmd$ bound is better. In particular, consider $G$ to be the graph in Example \ref{ExPMD-Termorder}. Its positive matching decomposition number is $3$ while $\alpha(G) = 4$.

\begin{remark}
    Example \ref{ExPMD-Termorder} can be generalized to the construction of a family of graphs for which the difference between the two numbers, $\alpha$ and $\pmd$, is arbitrarily large. Consider the graph $G$ to be the complete graph $K_n$ on $n$ vertices with $n-2$ edges (and their respective vertices) glued to one of its vertices. Notice that $\Delta(G) = \Delta(K_n) + n -2 = 2n-3$ and $k(G) = k(K_n) = n-1$. Therefore, $\alpha (G) = 3n-5$. In the other side we have from \cite[Thm.2.3.]{FGYP} that $\pmd(G) = \max \{\pmd(K_n), \Delta(G)\} = 2n-3$. In this case $\alpha(G)-\pmd(G) = n-2$. Hence the difference between the positive matching decomposition number and $\alpha$ is unbounded.
\end{remark}

 This remark together with other computations lead us to formulate the following conjecture.
 \begin{conjecture}
     Let $G$ be a simple graph, then $\pmd(G) \leq \alpha(G)$.
 \end{conjecture}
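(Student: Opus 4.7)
The plan is to proceed by induction on $|V(G)|$, peeling off a vertex $v$ of degree $s := \delta_G(v) \leq k(G)$, which exists because $G$ is $k(G)$-degenerate. Writing $G' = G \setminus v$, one has $\Delta(G') \leq \Delta(G)$ and $k(G') \leq k(G)$, hence $\alpha(G') \leq \alpha(G)$, so the inductive hypothesis furnishes a positive matching decomposition $E(G') = F_1 \cup \cdots \cup F_p$ with $p \leq \alpha(G)$. The $s \leq k(G)$ edges $e_i = \{v, u_i\}$ incident to $v$ must now be distributed into this decomposition in order to extend it to all of $E(G)$.

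The combinatorial strategy is to place as many $e_i$ as possible inside existing parts $F_j$ (whose matched vertices avoid $u_i$) and to append fresh singleton parts for those that cannot be absorbed. Since $\delta_{G'}(u_i) = \delta_G(u_i) - 1 \leq \Delta(G) - 1$, each $u_i$ is covered by at most $\Delta(G) - 1$ of the $F_j$'s, so it is ``free'' in at least $p - \Delta(G) + 1$ of them. A greedy or Hall-type argument applied to the bipartite ``free-incidence'' graph between $\{u_1, \dots, u_s\}$ and $\{F_1, \dots, F_p\}$ shows that at most $\max\bigl(0,\, s - (p - \Delta(G) + 1)\bigr)$ of the $e_i$ need fresh parts, so the total number of parts is bounded by $\max\bigl(p,\, s + \Delta(G) - 1\bigr) \leq \alpha(G)$, using $s \leq k(G)$ and $p \leq \alpha(G)$. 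Hence the naive count of parts works at the purely combinatorial level.

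The main obstacle is that this placement does not automatically preserve the \emph{positive} matching property: the assignment $e_i \mapsto F_{j_i}$ must also be compatible with the weight functions $w_{j_i}$ certifying positivity of $F_{j_i}$ on the appropriate residual subgraph. Extending $w_{j_i}$ from $V(G')$ to $v$ so that $F_{j_i} \cup \{e_i\}$ remains a positive matching requires $w_{j_i}(v) + w_{j_i}(u_i) > 0$ together with $w_{j_i}(v) + w_{j_i}(u_l) < 0$ for every neighbor $u_l$ of $v$ whose edge is placed at or after index $j_i$; this system is feasible in $w_{j_i}(v)$ precisely when $w_{j_i}(u_i)$ strictly exceeds each such $w_{j_i}(u_l)$. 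I expect the heart of the proof to lie in coupling the discrete bipartite matching above with these comparative inequalities across parts, perhaps by strengthening the inductive hypothesis to track, for each $F_j$, the polyhedral set of admissible weight perturbations, and then arguing that the slack in the bounds $s \leq k(G)$ and $p \leq \alpha(G)$ is enough to simultaneously satisfy the discrete and continuous constraints. It is this entanglement of the combinatorial decomposition with the real weight functions that appears to be the genuine source of the conjecture's difficulty.
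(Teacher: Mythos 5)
The statement you set out to prove is not proved in the paper: it appears there explicitly as an open conjecture, supported only by examples and computations, so there is no proof of record to compare yours against. Your proposal must therefore stand on its own, and as written it does not — it is a plan with a gap that you yourself identify but do not close. The inductive framework and the counting are sound: removing a vertex $v$ of degree $s \leq k(G)$, noting $\alpha(G \setminus v) \leq \alpha(G)$, observing that each neighbour $u_i$ of $v$ is covered by at most $\delta_{G'}(u_i) \leq \Delta(G)-1$ of the $p$ inherited parts, and applying the defect form of Hall's theorem to bound the total number of parts by $\max\bigl(p,\, s+\Delta(G)-1\bigr) \leq \alpha(G)$. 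The edges that cannot be absorbed can indeed go into fresh singleton parts, since at that stage the residual graph is a star centred at $v$ and a single edge of a star is always a positive matching.

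The genuine gap is exactly where you place it, and it is the entire difficulty of the conjecture, not a technical afterthought. If $e_i = \{v,u_i\}$ is inserted into an existing part $F_j$, the single value $w_j(v)$ must satisfy $w_j(v) + w_j(u_i) > 0$ together with $w_j(v) + w_j(u_l) < 0$ for every other neighbour $u_l$ of $v$ whose edge survives to stage $j$; this system is feasible only when $w_j(u_i)$ strictly exceeds every such $w_j(u_l)$, and the weight functions inherited from the inductive hypothesis give you no control over the relative order of the values $w_j(u_1), \dots, w_j(u_s)$. Perturbing the $w_j$ to force this order re-opens all the constraints coming from the edges of $G'$ that $w_j$ must still certify, and ``tracking the polyhedral set of admissible perturbations'' is a restatement of the problem rather than an argument; nor is it shown that the Hall-type assignment can be chosen compatibly with these order conditions across all parts simultaneously. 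A more promising route would be to trade the weight functions for a purely combinatorial reformulation of positivity and couple that with the matching argument, but as it stands the proposal establishes only what the paper already records: that the inequality is plausible and that the obstruction lies in the entanglement of the discrete assignment with the real-valued certificates.
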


\subsection{Algebraic background}
Some classical results in commutative algebra are presented in this subsection as they will be useful for proving the main results of this paper.

We start by recalling two well known criteria: one for normality and the other one for factoriality proved by Serre and Nagata respectively. For a complete proof of these theorems see \cite{fossum2012divisor} and \cite{Nagata1957}.

\begin{theorem}[Serre's criterion for normality]
\label{SerreNormal}
    The domain $R$ is normal if and only if the following conditions hold:
\begin{enumerate}
    \item[(R$_1$)] The ring $R_P$ is regular for all height 1 prime ideals $P$ of $R$.
    \item[(S$_2$)] $\depth R_P \geq \min \{2, \height P\}$ for all prime ideals $P$ of $R$.
\end{enumerate}
\end{theorem}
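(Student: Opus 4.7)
My plan is to route both implications through the fact that, in a Noetherian domain where either normality or (S$_2$) holds, every associated prime of a nonzero principal ideal has height at most one. This lets me reduce the normality check to a pointwise verification at height-one primes, where (R$_1$) takes over.

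For the $(\Leftarrow)$ direction, let $x = a/b$ be an element of the fraction field integral over $R$. By (R$_1$), each $R_P$ with $\hgt P = 1$ is a DVR, hence integrally closed, so $x \in R_P$ and $\bar a = 0$ in $(R/bR)_P$. I would then verify that (S$_2$) forces every associated prime $P$ of $R/bR$ to have height at most one: from $P \in \Ass(R/bR)$, the localization $(R/bR)_P$ has depth zero, and since $b$ is a nonzerodivisor the short exact sequence $0 \to R_P \xrightarrow{b} R_P \to R_P/bR_P \to 0$ gives $\depth R_P = 1$, so (S$_2$) yields $\hgt P \leq 1$. Because $\bar a$ vanishes at every associated prime of $R/bR$, we conclude $\bar a = 0$, i.e.\ $x \in R$.

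For the $(\Rightarrow)$ direction, (R$_1$) is immediate since a one-dimensional local normal domain is a DVR. For (S$_2$) I would establish the companion claim that in a normal Noetherian domain every associated prime $P$ of a nonzero principal ideal has height at most one. Localizing reduces to the local case, where $\mathfrak m = (bR : a)$ for some $a \notin bR$; setting $x = a/b$ in the fraction field gives $\mathfrak m \cdot x \subseteq R$. The key dichotomy is: either $\mathfrak m \cdot x \subseteq \mathfrak m$, in which case the determinant trick forces $x$ to be integral over $R$ and hence in $R$, contradicting $a \notin bR$; or $\mathfrak m \cdot x = R$, which forces $\mathfrak m$ to be principal, and Krull's Hauptidealsatz then gives $\dim R \leq 1$. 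From this height bound, (S$_2$) follows by prime avoidance: for any $P$ with $\hgt P \geq 2$ and any nonzero $b \in P$, the associated primes of $bR$ all have height one and therefore fail to contain $P$, so $P$ contains a nonzerodivisor modulo $b$, completing a regular sequence of length two in $R_P$.

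The main obstacle is the dichotomy step in the forward direction: it is compact once the right element $x$ is extracted from the colon-ideal description of an associated prime, but combining the determinant trick on one branch with Krull's principal ideal theorem on the other is the substantive content of the theorem. Everything else reduces to standard bookkeeping with depth sequences and the behaviour of associated primes under localization.
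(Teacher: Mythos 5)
The paper does not prove this statement; it is quoted as a classical result with a reference to Fossum and to Nagata, so there is no in-paper argument to compare against. Your proof is the standard one and is correct: the backward direction via integral closedness of the DVRs $R_P$ at height-one primes plus the (S$_2$)-forced height bound on $\Ass(R/bR)$, and the forward direction via the colon-ideal element $x=a/b$, the dichotomy between the determinant trick and Krull's Hauptidealsatz, and prime avoidance. The only point worth flagging is that you are (correctly) using the Noetherian hypothesis throughout --- finiteness of associated primes, the determinant trick on the finitely generated module $\mathfrak m$, the principal ideal theorem, and the fact that a one-dimensional local normal domain is a DVR --- which the paper's statement leaves implicit but which holds in its setting.
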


\begin{theorem}[Nagata's criterion for factoriality]
\label{Nagata}
    Let $R$ be a noetherian domain and let $x\in R$. If $(x)\subseteq R$ is prime and $R_x$ is \textit{UFD}, then $R$ is \textit{UFD}.
\end{theorem}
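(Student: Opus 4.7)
The plan is to invoke the classical characterization that a Noetherian domain is a UFD if and only if every height-one prime ideal is principal, and to verify this hypothesis for $R$ by analyzing an arbitrary height-one prime $P$ of $R$ in two cases according to whether $x \in P$.

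First I would observe that since $R_x$ is a UFD it is nonzero, so $x \neq 0$; combined with the hypothesis that $(x)$ is prime, this makes $(x)$ a nonzero principal prime, hence of height exactly one by Krull's Hauptidealsatz.

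In Case~1, where $x \in P$, I would use that the containment $(x) \subseteq P$ together with both ideals being height-one primes of a Noetherian domain forces $P = (x)$, so $P$ is principal. In Case~2, where $x \notin P$, the extension $PR_x$ is a height-one prime of the UFD $R_x$ (by the standard correspondence between primes of $R$ not meeting $\{x^n\}$ and primes of $R_x$, which preserves heights), hence principal. I would clear denominators and peel off any factor of $x$ (a unit in $R_x$) to write $PR_x = aR_x$ with $a \in R$ and $x \nmid a$ in $R$. Contracting, $a \in PR_x \cap R$ gives $x^m a \in P$ for some $m$, and primality of $P$ together with $x \notin P$ forces $a \in P$, so $aR \subseteq P$. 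For the reverse inclusion I would take $p \in P$ and choose $n \geq 0$ and $b \in R$ with $x^n p = ab$; since $(x)$ is prime and $x \nmid a$, one obtains $b = x b_1$, and cancellation in the domain $R$ yields $x^{n-1} p = a b_1$. Iterating $n$ times gives $p = a b_n \in aR$, so $P = aR$ is principal. Having shown every height-one prime of $R$ is principal, I would conclude that $R$ is a UFD via the standard Noetherian factorization argument: any nonzero nonunit of $R$ lies in a minimal prime of a principal ideal it generates, which has height one and is therefore principal by the above, so a prime factor splits off, and the ascending chain condition on principal ideals terminates the process.

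The step I expect to be the main obstacle is the descent argument in Case~2, where one must combine three ingredients at once: primality of $(x)$ (to extract a factor of $x$ from $b$), the domain structure of $R$ (to cancel $x$ after the extraction), and the initial normalization $x \nmid a$ (to force the extraction to land on $b$ rather than $a$). Dropping any one of these ingredients breaks the induction on the exponent $n$ and the containment $P \subseteq aR$ would no longer follow.
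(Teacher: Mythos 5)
Your proof is correct. Note, however, that the paper does not prove this statement at all: it records it as a special case of Nagata's theorem and refers to \cite{Nagata1957} and \cite{fossum2012divisor} for the proof, so there is no internal argument to compare against. Your route is the standard one found in those references: reduce to showing every height-one prime of $R$ is principal (Kaplansky's criterion plus the Noetherian splitting-off argument), handle $x\in P$ by comparing two height-one primes, and handle $x\notin P$ by pulling a generator of $PR_x$ back to $R$ and running the descent on the exponent of $x$. The cases and the descent are all sound. One small point worth making explicit: the normalization ``peel off any factor of $x$ to arrange $x\nmid a$'' terminates only because $\bigcap_{n\ge 1}(x^n)=0$ in a Noetherian domain (Krull's intersection theorem, using that $x$ is a nonunit since $(x)$ is a proper prime); without that, the claim that $a$ can be chosen with $x\nmid a$ is not automatic. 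With that observation supplied, the argument is complete.
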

This result is a special case of Nagata's Theorem described in \cite{Nagata1957}. To apply these criteria to our setting we have to deal with certain localizations of $R_G(d)$. We identify an element $D \in R_G(d)$ for which the localization of $R_G(d)$ at $D$ allows us to make conclusions about algebraic properties. We start by setting up some notation.

Let $G = (V,E)$ be a graph with $n$ vertices and $c$ edges and let $G' = (V', E')$ be the graph coming from $G$ by eliminating one vertex $n$ and its $t$ adjacent edges. Note that $|V'| = n-1$ and $|E'| = c-t$. We label from $1$ to $t$ the vertices in $G$ (and in $G'$) that are adjacent to $n$.
In the following, the variables associated to vertex $n$ play a special role and hence we rename them by setting $x_i := y_{ni}$ for $i = 1, \dots, d$. Notice that $S = S'[x_1, \dots, x_d]$. Also, $L_G(d) = L_{G'}(d) + (f_{1n}, \dots, f_{tn})$, where
$$f_{in} = \sum \limits_{j=1} ^{d} x_jy_{ij}.$$

Let $A$ be the generic matrix $t\times t$ that uses the variables $y_{ij}$ for $1 \leq i \leq t$ and $ d-t+1 \leq j \leq d$, that is,

\begin{equation}
\label{A}
   A =
     \begin{pmatrix}
      {y_{1,d-t+1}} & {y_{1, d-t+2}} & \dots  & {y_{1,d-1}} & {y_{1,d}}\\
      {y_{2,d-t+1}} & {y_{2, d-t+2}} & \dots  & {y_{2,d-1}} & {y_{2,d}}\\
      \vdots & & \ddots & & \\
      {y_{t,d-t+1}} & {y_{t, d-t+2}} & \dots  & {y_{t,d-1}} & {y_{t,d}}\\
     \end{pmatrix}. 
\end{equation}

Moreover, let
\begin{equation}
\label{D}
    D = \det(A).
\end{equation}

Sometimes we will consider the matrix $A$ and its determinant inside the quotient rings $R_G(d)$ or $R_{G'}(d)$, depending on the case. We will maintain the same notation for elements and their respective classes in order to simplify the notation, but we will emphasize the ring we are working on when needed.
We state some Lemmas that will be useful for proving the main results.

\begin{lemma}
\label{LemmaDnonZero}
    Let $G$, $G'$, $L_G(d)$, $R_G(d)$ and $D$ defined as above. Then,  $D \neq 0$ in $R_G(d)$.
\end{lemma}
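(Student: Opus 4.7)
The plan is to construct an explicit $\mathbb{K}$-algebra homomorphism $\bar{\phi}\colon R_G(d) \to \mathbb{K}$ with $\bar{\phi}(D) = 1$, which immediately forces $D \neq 0$ in $R_G(d)$.

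Concretely, I would first define a polynomial evaluation $\phi\colon S \to \mathbb{K}$ by sending each $x_j$ to $0$, sending $y_{ij}$ to $1$ whenever $1 \le i \le t$ and $j = i + (d-t)$ (so as to pick out the ``diagonal'' of the block $A$ appearing in \eqref{A}), and sending every other $y_{ij}$ to $0$. Under this map, $\phi(A)$ is the $t \times t$ identity matrix, so $\phi(D) = 1$; hence it suffices to check that $\phi$ annihilates every generator of $L_G(d)$ and therefore descends to $R_G(d)$.

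The verification is routine and is really the only substantive step. For an edge $\{i,n\}$ incident to $n$, the generator $f_{in} = \sum_k x_k y_{ik}$ is killed because every $\phi(x_k) = 0$. For an edge $\{i,j\} \in E'$ with $i < j \le n-1$, the generator $\sum_k y_{ik}y_{jk}$ vanishes term by term: the support of the row $\phi(y_{i,\cdot})$ is either empty (if $i > t$) or the single column $k = i + (d-t)$, and similarly the support of $\phi(y_{j,\cdot})$ is either empty or the single column $k = j + (d-t)$, so the two supports are disjoint because $i \neq j$. The only background hypothesis needed is $d \geq t$, so that the block $A$ is well-defined; this is implicit in the surrounding set-up. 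No Gr\"obner-basis input or normality argument is required for this lemma, which is why I expect no real obstacle in carrying out the proof.
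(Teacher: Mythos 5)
Your proof is correct, but it takes a genuinely different route from the paper's. You exhibit an explicit $\mathbb{K}$-point of $V(L_G(d))$ at which $D$ evaluates to $1$: the verification that the evaluation kills $f_{in}$ (because all $x_k \mapsto 0$) and kills $f_{ij}$ for $\{i,j\}\in E'$ (because the supports of the two rows land in distinct columns $i+(d-t)\neq j+(d-t)$) is complete, and the only standing hypothesis you need, $d\geq t$, is already built into the definition of $A$. The paper instead argues by containment in a monomial ideal: it sets $Q=(y_{ik}y_{jk}\mid i,j\in[n],\,k\in[d])$, observes $L_G(d)\subseteq Q$, and notes that every monomial of $D$ has pairwise distinct column indices, hence lies outside the monomial ideal $Q$, so $D\notin Q\supseteq L_G(d)$. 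Both arguments are elementary, work over any field, and in fact both show the stronger statement $D\notin\sqrt{L_G(d)}$ (the paper's because $Q$ is a radical monomial ideal, yours because a $\mathbb{K}$-point annihilating $L_G(d)$ annihilates its radical). The paper's version is a two-line support argument with nothing to verify edge-by-edge; yours is slightly longer but has the geometric bonus of producing an explicit orthogonal representation witnessing $D\neq 0$, which is in the spirit of the general-position representations that motivate the subject.
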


\begin{proof}
We will show that $D \notin L_G(d)$. Let $Q = \left( y_{ik}y_{jk} \hspace{2pt} | \hspace{2pt} 1\leq i \leq n, \hspace{1pt} 1\leq j\leq n, \hspace{1pt} 1\leq k \leq d \right)$. Notice that $L_G(d) \subseteq Q$. But, all monomials of $D$ are of the form $y_{i_1j_1}y_{i_2j_2}\dots y_{i_tj_t}$ with $j_r\neq j_s$ for all $r,s \in \{1, \dots, t\}$. Then $D \notin Q$ and therefore $D \notin L_G(d)$. 
\end{proof}

\begin{lemma}
\label{LemmaInclusion}
    Let $G$, $G'$, $R_G(d)$, $R_{G'}(d)$ defined as above. Then, the canonical map $R_{G'}(d) \rightarrow R_G(d)$ is an inclusion.
\end{lemma}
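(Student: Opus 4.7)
The statement to prove is equivalent to the equality $L_G(d)\cap S' = L_{G'}(d)$, where $S'=\mathbb{K}[y_{ij}\mid i\in[n-1],\,j\in[d]]$ and $S=S'[x_1,\dots,x_d]$ with $x_j=y_{nj}$. The inclusion $L_{G'}(d)\subseteq L_G(d)\cap S'$ is tautological, so the work is to establish the reverse containment.

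My plan is to exploit a simple grading on $S$. Give $S$ the $\mathbb{N}$-grading determined by $\deg(x_j)=1$ for $j=1,\dots,d$ and $\deg(y_{ij})=0$ for $i<n$. With respect to this grading, $S_0 = S'$. For every edge $e\in E'$ the generator $f_e\in S'$ has degree $0$, while each new generator
\[
f_{in}=\sum_{j=1}^{d} x_j\,y_{ij}\qquad (i=1,\dots,t)
\]
is homogeneous of degree $1$. Consequently the defining ideal
\[
L_G(d)=L_{G'}(d)\cdot S+(f_{1n},\dots,f_{tn})
\]
is a graded ideal of $S$.

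Now let $f\in L_G(d)\cap S'$, so $f$ is homogeneous of degree $0$. Since $L_G(d)$ is graded we may replace every coefficient in a representation of $f$ by its appropriate homogeneous component and write
\[
f=\sum_{e\in E'} g_e\,f_e + \sum_{i=1}^{t} h_i\,f_{in}
\]
with $g_e\in S_0=S'$ and $h_i\in S_{-1}$. But the grading is non-negative, so $S_{-1}=0$ and hence $h_i=0$ for every $i$. This forces $f=\sum_{e\in E'}g_e f_e\in L_{G'}(d)$, establishing the desired equality and thus the injectivity of $R_{G'}(d)\to R_G(d)$.

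There is no real obstacle here: the only subtlety is recognizing that the grading must place the "new" variables $x_j$ (the $n$-th row of the $y$-matrix) in degree $1$, so that precisely the edge polynomials involving vertex $n$ become homogeneous of positive degree while all others sit in degree $0$. Once that is arranged, extracting the degree-$0$ component of a representation yields the conclusion immediately.
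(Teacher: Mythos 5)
Your proof is correct and follows essentially the same route as the paper: the paper uses the $\mathbb{Z}^n$-multigrading $\deg y_{ij}=\mathfrak{e}_i$ and reads off the $n$-th coordinate, which is exactly the $\mathbb{N}$-grading you introduce (new variables in degree $1$, all others in degree $0$). Extracting the degree-zero component of a representation is the same mechanism in both arguments, so there is nothing to add.
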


\begin{proof}
    Consider the natural map
    \begin{align*}
        \phi: S' &\rightarrow R_G(d)\\
        y_{ij} &\mapsto \overline{y_{ij}}
    \end{align*}
    We would like to show that $ker \phi = S' \cap L_G(d)$ is equal to $L_{G'}(d)$.
    Notice that as $L_{G'}(d) \subseteq L_G(d)$ and $L_{G'}(d)$ is an ideal of $S'$ we have that $L_{G'}(d) \subseteq ker \phi$. On the other hand, consider the multigraded structure on $S$ induced by $\deg y_{ij} = \mathfrak{e}_i \in \mathbb{Z}^n$. Let $f \in S' \cap L_G(d)$ be a polynomial with $\deg f = (\alpha_1, \alpha_2, \dots, \alpha_n) \in \mathbb{Z}^n$. Then, as $f \in S'$ we have that $\alpha_n = 0$. As $f \in L_G(d) = L_{G'}(d)+ (f_{1n}, \dots, f_{tn})$ and $\deg f = (\alpha_1, \alpha_2, \dots, 0)$ we can conclude that $f \in L_{G'}(d)$. As this is valid for any $f \in S'\cap L_G(d)$ we have that $ker \phi = L_{G'}(d)$ which means that it exists an inclusion $R_{G'}(d) \xhookrightarrow{} R_G(d)$ sending each class $\overline{y_{ij}}\in R_{G'}(d)$ to its respective class in $R_G(d)$.
\end{proof}

\begin{lemma}
\label{RDcong}
Let $G$, $G'$, $R_G(d)$, $R_{G'}(d)$ and $D$ defined as above. Assume $L_G(d)$ is a complete intersection (e.g. $d \geq \pmd(G)$) then,
$$R_G(d)_D \cong R_{G'}(d) [x_{1}, \dots, x_{d-t}]_D.$$
\end{lemma}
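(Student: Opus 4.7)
The plan is to construct mutually inverse ring homomorphisms between the two algebras, using the fact that once $D$ is inverted one can algebraically eliminate $x_{d-t+1}, \dots, x_d$ from the equations $f_{1n} = \cdots = f_{tn} = 0$ by solving a linear system whose coefficient matrix is $A$.

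First I would put $R_G(d)$ in relative form. Because the generators of $L_{G'}(d)$ involve only the variables of $S'$, the decomposition $L_G(d) = L_{G'}(d) + (f_{1n}, \dots, f_{tn})$ gives
\[
R_G(d) \cong R_{G'}(d)[x_1, \dots, x_d]/(f_{1n}, \dots, f_{tn}).
\]
Set $\mathbf{x}_1 = (x_1, \dots, x_{d-t})^\top$, $\mathbf{x}_2 = (x_{d-t+1}, \dots, x_d)^\top$, and let $B$ be the $t \times (d-t)$ matrix with entries $y_{ij}$ for $1 \le i \le t$, $1 \le j \le d-t$. The $t$ relations above then amount to the single block-matrix identity $A\mathbf{x}_2 + B\mathbf{x}_1 = 0$, with $A$ as in \eqref{A}. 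Once $D = \det A$ is inverted, $A$ is invertible over $R_{G'}(d)_D$, so the vector $\mathbf{g} := -A^{-1} B\mathbf{x}_1$ has entries $g_1, \dots, g_t \in R_{G'}(d)[x_1, \dots, x_{d-t}]_D$ satisfying $A\mathbf{g} + B\mathbf{x}_1 = 0$.

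With these ingredients in hand I would define two maps and check they are mutually inverse. In one direction, $\psi \colon R_{G'}(d)[x_1, \dots, x_{d-t}]_D \to R_G(d)_D$ is the natural map arising from the inclusion of Lemma \ref{LemmaInclusion} together with $x_i \mapsto x_i$; it is well defined because $D$ is invertible in $R_G(d)_D$ by Lemma \ref{LemmaDnonZero}. In the other direction, $\phi \colon R_G(d)_D \to R_{G'}(d)[x_1, \dots, x_{d-t}]_D$ is obtained by descending the substitution $R_{G'}(d)[x_1, \dots, x_d]_D \to R_{G'}(d)[x_1, \dots, x_{d-t}]_D$ that fixes the subring $R_{G'}(d)[x_1, \dots, x_{d-t}]_D$ and sends $x_{d-t+k} \mapsto g_k$; this substitution sends $A\mathbf{x}_2 + B\mathbf{x}_1$ to $A\mathbf{g} + B\mathbf{x}_1 = 0$, so it factors through $R_G(d)_D$. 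The composition $\phi \circ \psi$ is the identity on every generator by construction, while $\psi \circ \phi$ reduces to checking $x_{d-t+k} = g_k$ in $R_G(d)_D$; this follows by left-multiplying the relation $A\mathbf{x}_2 + B\mathbf{x}_1 = 0$ by $A^{-1}$, a legitimate step in the localization.

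The main technical point is the clean block-matrix identification of the ideal $(f_{1n}, \dots, f_{tn})$ with the equation $A\mathbf{x}_2 + B\mathbf{x}_1 = 0$; after this, the rest is essentially elementary linear algebra over $R_{G'}(d)_D$. The complete intersection hypothesis, while not used in an essential way to produce the isomorphism itself, ensures that $R_G(d)$ and $R_{G'}(d)$ are sufficiently well-behaved (via Theorem \ref{CWStable}) for the resulting presentation of $R_G(d)_D$ to be exploited in the applications that follow.
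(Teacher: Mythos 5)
Your proof is correct, and it takes a genuinely different route from the paper's. The computational core is shared: both arguments write the relations $f_{1n},\dots,f_{tn}$ as $A\mathbf{x}_2+B\mathbf{x}_1=0$ and invert $A$ (via the adjugate) once $D$ is a unit, thereby expressing $x_{d-t+1},\dots,x_d$ in terms of the remaining variables. The difference lies in how the polynomial-ring structure of the target is certified. The paper first proves the equality of subrings $R_{G'}(d)[\overline{x_1},\dots,\overline{x_{d-t}}]_D=R_G(d)_D$ and then, in a separate step, shows that $\overline{x_1},\dots,\overline{x_{d-t}}$ are algebraically independent over $R_{G'}(d)$ by a dimension count ($\dim R_G(d)=nd-\#E$ versus $\dim R_{G'}(d)=(n-1)d-\#E'$), and it is precisely there that the complete intersection hypothesis is used. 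You instead start from the relative presentation $R_G(d)\cong R_{G'}(d)[x_1,\dots,x_d]/(f_{1n},\dots,f_{tn})$ and construct an explicit retraction $\phi$ by substituting $x_{d-t+k}\mapsto g_k$, so injectivity of the natural map comes for free from $\phi\circ\psi=\mathrm{id}$ and no dimension count is needed; in particular, as you note, the isomorphism itself holds without assuming $L_G(d)$ is a complete intersection, that hypothesis being needed only for the properties of $R_{G'}(d)$ exploited in later applications. The one step worth making fully explicit is why $\phi$ descends: since localization commutes with quotients, $R_G(d)_D\cong R_{G'}(d)[x_1,\dots,x_d]_D/(f_{1n},\dots,f_{tn})$, and your substitution annihilates each $f_{in}$ because $A\mathbf{g}+B\mathbf{x}_1=0$; with that spelled out, the argument is complete.
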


\begin{proof}
   It is enough to show
    \begin{enumerate}[label=(\alph*)]
         \item $ R_{G'}(d) [\overline{x_{1}}, \dots, \overline{x_{d-t}}]_D =  R_G(d)_D,$
         \item The elements $\overline{x_{1}}, \dots, \overline{x_{d-t}}$ are algebraically independent over $R_{G'}(d)$.
    \end{enumerate}
    To show part (a) we will prove both containments. Let us start by showing $R_{G'}(d)[\overline{x_1}, \dots, \overline{x_{d-t}}]_D \subseteq R_G(d)_D$. By Lemma \ref{RDcong} we know that $R_{G'}(d) \subseteq R_G(d)$.
    Then, as $S'[x_1, \dots, x_{d-t}]\subseteq S$ we have that
    $$R_{G'}(d)[\overline{x_1}, \dots, \overline{x_{d-t}}]_D \subseteq R_G(d)_D.$$
    To prove $R_G(d)_D \subseteq R_{G'}(d)[\overline{x_1}, \dots, \overline{x_{d-t}}]_D$ it is enough to show that the elements $\overline{x_{d-t+1}}, \dots, \overline{x_d}$ are in $R_{G'}(d)[\overline{x_1}, \dots, \overline{x_{d-t}}]_D$. In $R_G(d)$ we have
    \begin{align*}
            f_{1n} &=  y_{11}x_1 + y_{12}x_2 + \dots + y_{1d}x_d = 0, \\ 
            f_{2n} &=  y_{21}x_1 + y_{22}x_2 + \dots + y_{2d}x_d = 0, \\
            \vdots \\
            f_{tn} &= y_{t1}x_1 + y_{t2}x_2 + \dots + y_{td}x_d = 0.
    \end{align*}
    By moving the first $d-t$ terms of each polynomial to the right hand side we obtain
    \begin{align*}
            x_{d-t+1}y_{1,d-t+1} + x_{d-t+2}y_{1,d-t+2} + \dots + x_{d}y_{1d} &= -\sum \limits_{i=1}^{d-t} x_i y_{1i} \\
           x_{d-t+1}y_{2,d-t+1} + x_{d-t+2}y_{2,d-t+2} + \dots + x_{d}y_{2d} &= -\sum \limits_{i=1}^{d-t} x_i y_{2i} \\
           & \vdots\\
            x_{d-t+1}y_{t,d-t+1} + x_{d-t+2}y_{t,d-t+2} + \dots + x_{d}y_{td} &= -\sum \limits_{i=1}^{d-t} x_iy_{ti}.\\
    \end{align*}
    We can write it in matrix form, that is 
    \[
         A
        \begin{pmatrix}
         x_{d-t+1}\\ x_{d-t+2}\\ \vdots\\ x_{d}
        \end{pmatrix}
        = - \begin{pmatrix}
        \sum \limits_{i=1}^{d-t} x_i y_{1i}\\
        \sum \limits_{i=1}^{d-t} x_i y_{2i}\\
        \vdots\\
        \sum \limits_{i=1}^{d-t} x_iy_{ti}
        \end{pmatrix}
        \]

        where $A$ is the matrix defined in \ref{A}.
        Recall that $D = \det(A)$, then, multiplying both sides by the $\adj(A)$ we have
        \begin{equation*}
            D  
        \begin{pmatrix}
         x_{d-t+1}\\ x_{d-t+2}\\ \vdots\\ x_{d}
        \end{pmatrix}
         = B,
        \end{equation*}
        with $B$ a matrix with entries in the ring $S'[x_1, \dots, x_{d-t}]$.
        Therefore, in the localization $R_G(d)_D$, we can write each variable $x_{d-t+1}, \dots, x_d$ in terms of the rest of the variables in $S'[x_{1}, \dots, x_{d-t}]$, that is $\overline{x_{d-t+1}}, \dots, \overline{x_d} \in R_{G'}(d)[\overline{x_1}, \dots, \overline{x_{d-t}}]_D$.
        Then $R_G(d)_D = R_{G'}(d)[\overline{x_1}, \dots, \overline{x_{d-t}}]_D$ and we have proved part (a).

        To prove part (b) notice that
        \begin{equation}
        \label{eq2}
            \dim R_{G'}(d) [\overline{x_1}, \dots, \overline{x_{d-t}}] \leq \dim R_{G'}(d) + d-t,
        \end{equation}
        with the equality if and only if $\overline{x_1}, \dots, \overline{x_{d-t}}$ are algebraically independent. From part (a) we deduce that $R_G(d)$ and $R_{G'}(d)[\overline{x_1}, \dots, \overline{x_{d-t}}]$ have the same field of fractions, therefore the same transcendental degree over $\mathbb{K}$. This implies that 
         $$\dim R_{G'}(d)[\overline{x_1}, \dots, \overline{x_{d-t}}] = \dim R_G(d).$$
        Then, as $d \geq \pmd(G)$ the ideals $L_G(d)$ and $L_{G'}(d)$ are complete intersections by \ref{CWThm} and \ref{CWStable}. Then, we have $\dim R_{G'}(d) = \# \text{ variables in } S' - \#E' = (n-1)d - \#E'$ and $\dim R_{G}(d) = nd - \#E$.
        Therefore, in equation (\ref{eq2}) we have
        $$nd - \#E \leq (n-1)d - \#E' + d-t,$$
        which is an equality and we can conclude that $\overline{x_1}, \dots, \overline{x_{d-t}}$ are algebraically independent over $R_{G'}(d)$.
\end{proof}

\section{Strong $F$-regularity}
\label{Sec2}

The goal for this section is to prove Theorem \ref{ThmA}. In order to do that we first recall the Glassbrenner criterion for complete intersections which stands in the core of our proof.

We use $I^{[p^e]}$ to denote the $p^e$ Frobenius power of the ideal $I \subseteq R$, that is,
$$I^{[p^e]} = (a^{p^e} \hspace{2pt} | \hspace{2pt} a\in I).$$

\begin{theorem} [\cite{Glassbrenner}]
\label{CoroGlassbrenner}
    Let $S = \mathbb{K}[x_1, \dots, x_n]$ where $\mathbb{K}$ is a field of prime characteristic $p>0$. Let $\mathfrak{M} = (x_1, \dots, x_n)$, $I = (f_1, \dots, f_t)$ an ideal generated by homogeneous elements and $R = S/I$. Suppose $s \in S$ is a homogeneous polynomial such that $s \notin P$ for all $P \in Min(I)$ and $R_s$ is strongly $F$-regular.
    Then, $R$ is strongly $F$-regular if and only if there exists $e>0$ such that 
    $$s(f_1^{p^e-1} \cdot \ldots \cdot f_t^{p^e-1})\notin \mathfrak{M}^{[p^e]}.$$
\end{theorem}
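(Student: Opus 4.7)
The plan is to reduce strong $F$-regularity of $R$ to a single Frobenius splitting condition via Fedder-type considerations, together with a ``spreading out'' argument that propagates splittings from the open locus $D(s)$ to all of $\mathrm{Spec}\, R$. First, I would invoke the Hochster--Huneke characterization: $R$ is strongly $F$-regular if and only if for every $c \in R$ avoiding all minimal primes, there exists $e > 0$ such that the $R$-linear map $R \to F^e_* R$, $1 \mapsto F^e_* c$, splits. When $R = S/I$ with $S$ regular, Fedder's lemma identifies $\mathrm{Hom}_R(F^e_* R, R) \cong (I^{[p^e]} :_S I)/I^{[p^e]}$ as an $F^e_* R$-module, and under this identification a $c$-splitting at level $e$ exists if and only if some $u \in (I^{[p^e]} :_S I)$ satisfies $u c \notin \mathfrak{M}^{[p^e]}$.

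Second, I would compute the colon ideal explicitly for our complete intersection: since $f_1, \ldots, f_t$ is a regular sequence in $S$ (and this is precisely where the complete intersection hypothesis enters), standard Koszul and Matlis duality arguments on the Gorenstein quotient $S/I^{[p^e]}$ yield
$$(I^{[p^e]} :_S I) \;=\; I^{[p^e]} + \bigl((f_1 f_2 \cdots f_t)^{p^e - 1}\bigr).$$
Substituting into the criterion from the previous step, a $c$-splitting at level $e$ exists if and only if $c \cdot (f_1 \cdots f_t)^{p^e - 1} \notin \mathfrak{M}^{[p^e]}$, since $I \subseteq \mathfrak{M}$.

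For the forward implication, $s$ avoids all minimal primes of $I$ by hypothesis, so applying this last criterion directly with $c = s$ produces the required $e$. For the backward implication, the assumed non-containment supplies a splitting $\phi_0 : F^e_* R \to R$ with $\phi_0(F^e_* s) = 1$; iterating Frobenius and composing such splittings yields, for each $N$, a splitting $\phi_N : F^{eN}_* R \to R$ sending $F^{eN}_* s^{M}$ to $1$ for arbitrarily large $M$. Given any $c$ not in a minimal prime of $R$, strong $F$-regularity of $R_s$ produces an $R_s$-splitting of multiplication by $c$ which, after clearing denominators, descends to a map $\psi : F^{e'}_* R \to R$ with $\psi(F^{e'}_* c) = s^{M_0}$ for some $M_0$. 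Composing $\psi$ with $\phi_N$ for $M \geq M_0$ absorbs the $s^{M_0}$ factor and produces the required $c$-splitting over $R$. The main obstacle is the colon computation above --- it is the genuine algebraic content, reducing strong $F$-regularity to a single polynomial non-containment --- together with the careful bookkeeping needed in the backward direction to glue the splitting on $D(s)$ to a global splitting on $R$.
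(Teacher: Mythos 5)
The paper does not actually prove this statement: it is quoted directly from Glassbrenner's work, so there is no in-paper argument to compare against. Your reconstruction is the standard proof and is essentially correct: Fedder's identification of $\mathrm{Hom}_R(F^e_*R,R)$ with $F^e_*\bigl((I^{[p^e]}:_S I)/I^{[p^e]}\bigr)$, the colon-ideal computation $(I^{[p^e]}:_S I)=I^{[p^e]}+\bigl((f_1\cdots f_t)^{p^e-1}\bigr)$ for a regular sequence, and the Hochster--Huneke ``one test element suffices'' argument that propagates the $s$-splitting to an arbitrary $c$ by composing a power of the $s$-splitting with the denominator-cleared splitting coming from strong $F$-regularity of $R_s$. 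Two remarks. First, the theorem as stated in the paper says only ``an ideal generated by homogeneous elements,'' but your colon computation --- and Glassbrenner's actual theorem --- requires $f_1,\dots,f_t$ to be a regular sequence; without that hypothesis only the ``if'' direction survives, since $(f_1\cdots f_t)^{p^e-1}f_i=f_i^{p^e}\prod_{j\neq i}f_j^{p^e-1}\in I^{[p^e]}$ shows $(f_1\cdots f_t)^{p^e-1}\in(I^{[p^e]}:_S I)$ for any generating set, whereas the ``only if'' direction needs the reverse containment. This is harmless for the paper, which applies the criterion to $L_G(d)$ with $d\ge\pmd(G)$, where the generators do form a regular sequence, and which only invokes the ``if'' direction; but your write-up should state the regular-sequence hypothesis explicitly. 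Second, the identification of $\mathrm{Hom}_R(F^e_*R,R)$ and the equivalence ``$uc\notin\mathfrak{M}^{[p^e]}$ iff a $c$-splitting exists at level $e$'' presuppose that $\mathbb{K}$ is $F$-finite (or that one works with the graded splitting on monomial bases directly); this is the usual convention in Glassbrenner's setting and should be recorded, but it is not a gap in substance.
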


Now we can present the proof of Theorem \ref{ThmA}.

\begin{theorem}
\label{ThmSFR}
    Let $G = ([n], E)$ be a simple graph and let $d \geq \pmd(G)+ k(G)$. 
    \begin{itemize}
  \item[(1)] If  $\chara \mathbb{K} = p>0$ then $R_G(d)$ is strongly $F$-regular,
  \item[(2)] If  $\chara \mathbb{K} = 0$ then $R_G(d)$ has rational singularities.
    \end{itemize}
\end{theorem}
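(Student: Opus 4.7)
The plan is to prove (1) by induction on $n = |V|$ and deduce (2) by standard reduction mod $p$. For the base case when $G$ has no edges, $R_G(d) = S$ is a polynomial ring, which is strongly $F$-regular. For the inductive step, the degeneracy assumption lets me choose a vertex, WLOG vertex $n$, whose degree $t$ satisfies $t \le k(G)$; let $G' := G \setminus \{n\}$. Both $\pmd(G') \le \pmd(G)$ (any pmd of $G$ restricts to one of $G'$) and $k(G') \le k(G)$ (induced subgraphs of $G'$ are induced in $G$), so the hypothesis $d \ge \pmd(G)+k(G)$ passes to $G'$ and the inductive hypothesis gives that $R_{G'}(d)$ is strongly $F$-regular. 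By Lemma \ref{RDcong}, $R_G(d)_D \cong R_{G'}(d)[x_1,\ldots,x_{d-t}]_D$, hence $R_G(d)_D$ is strongly $F$-regular as well. Since $k(G) \ge 1$, we have $d \ge \pmd(G)+1$, so by Theorem \ref{CWThm} the ideal $L_G(d)$ is prime and is its own unique minimal prime, and Lemma \ref{LemmaDnonZero} gives $D \notin L_G(d)$. The two hypotheses of Glassbrenner's criterion (Theorem \ref{CoroGlassbrenner}) are thus in place, and the remaining task is to exhibit $e > 0$ with
\[
    D \cdot \prod_{\varepsilon \in E} f_\varepsilon^{\,p^e-1} \;\notin\; \mathfrak{M}^{[p^e]}.
\]

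To verify this I would use the term order $\prec$ from Lemma \ref{LemTermOrd} and observe that, since leading terms multiply without cancellation, the monomial
\[
    \ini_\prec(D) \cdot \prod_{\varepsilon \in E} \ini_\prec(f_\varepsilon)^{\,p^e-1}
\]
appears in the polynomial $D \cdot \prod_\varepsilon f_\varepsilon^{\,p^e-1}$ with nonzero coefficient. Let $p := \pmd(G)$. Each $\ini_\prec(f_\varepsilon)$ is supported on variables $y_{ij}$ with column index $j \in \{1,\ldots,p\}$, and because $E_l$ is a matching for each $l$, every variable appearing in $\prod_\varepsilon \ini_\prec(f_\varepsilon)^{\,p^e-1}$ has exponent either $0$ or $p^e-1$. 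The determinant $D$ is, by construction, a polynomial in the variables $y_{ij}$ with $i \in \{1,\ldots,t\}$ and $j \in \{d-t+1,\ldots,d\}$, and the inequality $d \ge \pmd(G)+k(G) \ge p+t$ gives $d-t+1 \ge p+1$, so these column indices are disjoint from $\{1,\ldots,p\}$. Consequently, in the displayed monomial every variable has exponent at most $\max(p^e-1,1) < p^e$, so it is not in $\mathfrak{M}^{[p^e]}$ (any $e \ge 1$ works), completing the inductive step for (1).

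For (2), the defining equations of $L_G(d)$ have integer coefficients, so $R_G(d)$ is of strongly $F$-regular type by (1); by the reduction-mod-$p$ theorems of Smith and of Hara/Mehta--Srinivas, a finitely generated $\mathbb{Q}$-algebra of strongly $F$-regular type has rational singularities. The subtle point in the plan is arranging that the supports of $\ini_\prec(D)$ and of $\prod_\varepsilon \ini_\prec(f_\varepsilon)^{p^e-1}$ are disjoint as sets of variables: this is precisely what forces the bound $d \ge \pmd(G)+k(G)$ rather than the weaker $d \ge \pmd(G)$, and it motivates choosing the determinantal element $D$ in the \emph{last} $t$ columns of the variable array so that it lives entirely outside the $p$ columns used by the positive matching decomposition.
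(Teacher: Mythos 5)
Your proposal is correct and follows essentially the same route as the paper: induction on the number of vertices, Glassbrenner's criterion applied with $s = D$, Lemma \ref{RDcong} for the strong $F$-regularity of $R_G(d)_D$, and the term order of Lemma \ref{LemTermOrd} to check that $D\prod_{e\in E} f_e^{p^e-1}\notin\mathfrak{M}^{[p^e]}$ because the matching columns $\{1,\dots,\pmd(G)\}$ and the determinant columns $\{d-t+1,\dots,d\}$ are disjoint when $d\ge\pmd(G)+k(G)$ and $t\le k(G)$. If anything, your write-up is slightly more careful than the paper's in explicitly verifying that $L_G(d)$ is prime (so $D$ avoids every minimal prime) and in spelling out the inequality $d-t+1\ge\pmd(G)+1$.
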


\begin{proof} By virtue of \cite{Smith, MehtasSrinivas} (2) follows from (1).
To prove (1) we use induction on $n$, the number of vertices of $G$. For $n=1$ and $d \geq 1$, $R_G(d) = S = \mathbb{K}[y_1, \dots, y_d]$ is the polynomial ring on $d$ variables. Then, $R_G(d)$ is a regular ring, hence $R_G(d)$ is strongly $F$-regular. For $n > 1$ take $G'$, $S'$ and $R_{G'}(d)$ to be as described at the end of Section \ref{SectNotation}. By induction hypothesis $R_{G'}(d)$ is strongly $F$-regular for $d \geq \pmd(G') + k(G') +1$. We have to show that $R_G(d)$ is strongly $F$-regular for $d \geq \pmd(G) + k(G) +1 \geq \pmd(G') + k(G') +1$. To do so consider the element $D \in R_G(d)$ as described in \ref{D} and recall that by Lemma \ref{LemmaDnonZero} $D \neq 0$ in $R_G(d)$. By virtue of Theorem \ref{CoroGlassbrenner}, to prove strong $F$-regularity of $R_G(d)$ it is enough to show:
    \begin{enumerate}
        \item[(1)] \label{part1}$R_G(d)_D$ is strongly $F$-regular,
        \item[(2)] There exists $e>0$ such that $D \left( \prod\limits_{\{i,j\} \in E} f_{ij}^{p^e-1}\right) \notin \mathfrak{M}^{[p^e]}.$
    \end{enumerate}

From Lemma \ref{RDcong} we have that $R_G(d)_D \cong R_{G'}(d) [x_{1}, \dots, x_{d-t}]_D$. Strong $F$-regularity preserves under localization and polynomial extensions, then as $R_{G'}(d)$ was strongly $F$-regular by induction hypothesis, we can conclude that $R_{G}(d)_D$ is strongly $F$-regular proving part (1).

For part (2) we will show that there exists a term order $\prec$ such that 
$$ \ini_{\prec} \left( D \left( \prod\limits_{\{i,j\} \in E} f_{ij}^{p^e-1}\right)\right) \notin \mathfrak{M}^{[p^e]}.$$
Consider the term order used in Lemma \ref{LemTermOrd}. As all the $\ini_{\prec}(f_{ij})$ are pairwise coprime and square-free for $\{i,j\} \in E$, we have that in 
$$\prod \limits_{\{i,j\} \in E} \ini_{\prec}(f_{ij}) ^{p^e-1}$$
each variable appears with power at most $p^e -1$.
Notice that $\ini_{\prec}(f_{ij})$ uses the variables with index $j \leq \pmd(G)$ for all $\{i,j\} \in E$
and $D$ uses the variables with index $j > \pmd(G)$. Then, as $d \geq \pmd(G) + k(G)$ we have that  $D$ uses variables not in $\ini_{\prec}(f_{ij})$ for all $\{i,j\} \in E$. Hence, in the product 
$$\ini_{\prec}(D) \left( \prod \limits_{\{i,j\} \in E} \ini_{\prec}(f_{ij}) ^{p^e-1} \right)$$
each variable appears with exponent at most $p^e-1$.
We can then conclude that
$$D \left( \prod\limits_{\{i,j\} \in E} f_{ij}^{p^e-1}\right) \notin \mathfrak{M}^{[p^e]}$$
for any $e>0$.
\end{proof}

\section{Unique Factorization Domain}
\label{Sec3}

In this section we prove that there is a stabilization point for $R_G(d)$ being a unique factorization domain. This stabilization point depends on the combinatorics of the graph $G$, more specifically, it is related to the positive matching decomposition number and the degeneracy of $G$. Nonetheless, the given bound is probably not sharp, therefore it is possible that for a specific graph the UFD stabilization is achieved earlier. We prove the main result of the section, Theorem \ref{ThmB}, next.

\begin{theorem}
\label{ThmUFD}
    Let $G = ([n], E)$ be a simple graph. If $d \geq \pmd(G) + k(G) + 1$, then $R_G(d)$ is \textit{UFD}.
\end{theorem}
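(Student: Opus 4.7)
I would proceed by induction on $n := |V(G)|$. For $n = 1$, the ring $R_G(d)$ is a polynomial ring in $d$ variables and thus a UFD. For $n > 1$, select a vertex $v$ realizing the minimum degree of $G$ and relabel it as vertex $n$; by the definition of degeneracy, $t := \delta(n) \leq k(G)$. Set $G' = G \setminus \{n\}$. Since $G'$ is an induced subgraph of $G$, restricting any positive matching decomposition of $G$ to $E'$ gives $\pmd(G') \leq \pmd(G)$, and monotonicity of degeneracy under induced subgraphs yields $k(G') \leq k(G)$. Hence $d \geq \pmd(G') + k(G') + 1$, so the induction hypothesis applies and $R_{G'}(d)$ is a UFD.

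Then I would apply Nagata's criterion (Theorem~\ref{Nagata}) to the Noetherian domain $R_G(d)$ with $x = D$ the determinant from \eqref{D}. For the localization hypothesis, Lemma~\ref{RDcong} gives
\[
R_G(d)_D \cong R_{G'}(d)[x_1, \ldots, x_{d-t}]_D,
\]
which is a UFD since the class of UFDs is closed under polynomial extensions and localization. It remains to check that $(D) \subseteq R_G(d)$ is a prime ideal, equivalently that $S/(L_G(d) + (D))$ is a domain.

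The starting point is the term order $\prec$ from Lemma~\ref{LemTermOrd}: the leading monomials $\ini_\prec(f_{ij})$ are pairwise coprime squarefree monomials supported on variables $y_{i,l}$ with $l \leq \pmd(G)$, while $\ini_\prec(D)$ is a single squarefree monomial (one transversal of $A$) supported on variables $y_{i,l}$ with $d - t + 1 \leq l \leq d$. The combined hypotheses $t \leq k(G)$ and $d \geq \pmd(G) + k(G) + 1$ give $d - t + 1 \geq \pmd(G) + 2$, so these variable supports are disjoint. Proposition~\ref{GBPropRS2} then yields
\[
\ini_\prec(L_G(d) + (D)) = \ini_\prec(L_G(d)) + (\ini_\prec(D)),
\]
a squarefree monomial complete intersection, and by Proposition~\ref{GBProp} the ideal $L_G(d) + (D)$ is a radical complete intersection in $S$.

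The main obstacle is promoting radicality to primeness, i.e.\ showing that the variety $V(L_G(d) + (D))$ is irreducible. Because $R_G(d)/(D)$ is Cohen--Macaulay (as a complete intersection) and reduced, all its associated primes are minimal of equal height, so it suffices to rule out more than one minimal prime. A natural approach is a parametrization argument: on the open locus where some $(t-1) \times (t-1)$ minor of $A$ is non-zero, $A$ has rank exactly $t-1$ and the relations $f_{1n} = \dots = f_{tn} = 0$ together with $\det A = 0$ pin down the tuple $(x_{d-t+1}, \dots, x_d)$ up to the one-dimensional kernel of $A$, exhibiting this open locus as the image of an irreducible variety. Gluing these parametrizations as the minor ranges over $A$ — or alternatively verifying connectedness in codimension one and invoking Hartshorne's connectedness theorem — should yield irreducibility of $V(L_G(d) + (D))$ and complete the verification of Nagata's criterion.
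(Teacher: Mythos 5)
Your induction setup, the use of Nagata's criterion with $x = D$, and the localization step via Lemma \ref{RDcong} all match the paper's proof. The gap is in the verification that $(D)$ is prime in $R_G(d)$. Your term-order argument only shows that $\ini_\prec(L_G(d)+(D))$ is a squarefree monomial complete intersection, hence that $L_G(d)+(D)$ is a \emph{radical} complete intersection; a radical complete intersection can perfectly well have several minimal primes (think of $(xy)$), so this does not give primeness. You correctly flag this as the main obstacle, but neither of your proposed remedies is carried out, and the second one cannot work as stated: Hartshorne's connectedness theorem gives connectedness in codimension one for a complete intersection, which is strictly weaker than irreducibility (two planes in $\mathbb{A}^4$ meeting along a line are connected in codimension one). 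The parametrization/gluing idea would additionally require showing that no irreducible component of $V(L_G(d)+(D))$ lies entirely in the locus where all $(t-1)\times(t-1)$ minors of $A$ vanish, which you do not address.

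The paper closes this gap with a different, and quite economical, initial-ideal computation: instead of a term order it uses the weight vector $\omega$ assigning weight $2$ to $y_{ij}$ with $j\le \pmd(G)+1$ and weight $1$ otherwise. With this $\omega$ the initial form of the generator $f_{ij}^d$ is not a monomial but the entire polynomial $f_{ij}^{\hat{d}}$ with $\hat{d} = \pmd(G)+1$, while $\ini_\omega(D)=D$. Proposition \ref{GBPropRS2} then gives $\ini_\omega(L_G(d)+(D)) = L_G(\hat{d})+(D)$, which is prime because $L_G(\hat{d})$ is prime by Theorem \ref{CWThm} (as $\hat{d} \ge \pmd(G)+1$), $(D)$ is the prime ideal generated by a generic determinant, and the two ideals use disjoint sets of variables --- this is exactly where $d \ge \pmd(G)+k(G)+1$ and $t\le k(G)$ enter. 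Primeness of $L_G(d)+(D)$ then follows from Proposition \ref{GBProp}. To salvage your route you would need some such device that transfers primeness, and not merely radicality, through the degeneration.
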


\begin{proof}
We use induction on $n$, the number of vertices of the graph $G$. For $n=1$ and $d\geq 1$ we have that $R_G(d) = S = \mathbb{K}[y_1, \dots, y_d]$ and it is a unique factorization domain. Let $n > 1$ and take $G'$, $S'$ and $R_{G'}(d)$ as described at the end of Section \ref{SectNotation}. By induction hypothesis $R_{G'}(d)$ is a unique factorization domain for $d\geq \pmd(G')+ k(G') +1$, we will prove that $R_G(d)$ is a UFD for $d\geq \pmd(G)+k(G)+1 \geq \pmd(G')+ k(G') +1$.
To do so we use Nagata's criterion for factoriality (\ref{Nagata}). From Theorem \ref{CWThm} we know that $R_G(d)$ is a Noetherian domain as $d\geq \pmd(G) + 1$. Consider the element $D\in R_G(d)$ defined in \ref{D}. Then, by virtue of Theorem \ref{Nagata}, it is enough to show
\begin{enumerate}
      \item \label{PPrime}$(D) \subseteq R_G(d)$ is a prime ideal,
      \item \label{PUFD} $R_G(d)_D$ is a unique factorization domain.
  \end{enumerate}

Proving part \ref{PPrime} is the same as proving that $L_G(d) + (D)$ is prime on $S$. To show this, by Proposition \ref{GBProp}, it is enough to prove that for certain weight vector $w$, the initial ideal $\ini_w(L_G(d) + (D))$ is prime. Take $w$ defined as follows:
    $$w(y_{ij}) = \begin{cases}
		2, & \text{for $j \leq \pmd(G) +1$}\\
        1, & \text{for $j > \pmd(G) +1$}.
	\end{cases}$$

As $D$ is a non-zero divisor of $R_G(d)$, by Proposition \ref{GBPropRS2} we have that
\begin{equation}
\label{EqIn}
    \ini_{\omega} (L_G(d) + (D)) = \ini_{\omega}(L_G(d)) + \ini_{\omega}((D)).
\end{equation}

Notice that $\ini_w((D)) = (D)$ as $D$ only uses variables $y_{ij}$ such that $j>\pmd(G)+1$ and all of them have the same weight. Let $f_{ij}^d$ be the generator of $L_G(d)$ associated to the edge $\{i,j\}$. Notice that $\ini_w(f_{ij}^d) = f_{ij}^{\hat{d}}$ with $\hat{d} = \pmd(G)+1$. By Theorem \ref{CWThm} $L_G(d)$ is a complete intersection, then the elements of the form $f_{ij}^d$ such that $\{i,j\} \in E$ form a regular sequence. Using again Proposition \ref{GBPropRS2} we have
    $$\ini_{\omega}(L_G(d)) = (\ini_{\omega} (f_{ij}^d) \hspace{2pt} | \hspace{2pt} \{i,j\} \in E) = (f_{ij}^{\hat{d}} \hspace{2pt} | \hspace{2pt} \{i,j\} \in E) = L_G(\hat{d}).$$
    
Therefore equation \ref{EqIn} translates to
\begin{equation*}
    \ini_{\omega} (L_G(d) + (D)) = L_G(\hat{d}) + (D).
\end{equation*}

    Again by Theorem \ref{CWThm}, $L_G(\hat{d})$ is prime and $(D)$, being the ideal generated by the determinant of a generic matrix, is also prime. Additionally, $L_G(\hat{d})$ and $(D)$ use different variables, which implies that $L_G(\hat{d}) + (D) = \ini_w(L_G(d) + (D))$ is prime. Notice that this is possible because $d \geq \pmd(G) + k(G) +1$ and we can assume $t \leq k(G)$, where $D$ uses the last $t$ variables associated to the $t$ vertices neighbouring vertex $n$.
    We conclude that $L_G(d)+ (D)$ is prime on $S$ as wanted.\\

    For part \ref{PUFD} notice that from Lemma \ref{RDcong} we have 
    $$R_G(d)_D \cong R_{G'}(d)[x_1, \dots, x_{d-t}]_D.$$
    Then, as by induction hypothesis we have that $R_G'(d)$ was a unique factorization domain, we have that $R_G(d)_D$ is a unique factorization domain. By parts \ref{PPrime}, \ref{PUFD} and Theorem \ref{Nagata} we can conclude that $R_G(d)$ is a unique factorization domain.
\end{proof}

\section{Normality and Divisor Class Group}
\label{Sec4}

We have shown that when $d$ is large enough the LSS ring $R_G(d)$ has rational singularities and, therefore, it is normal. In this section we give a direct prove of normality in the case $G$ is a forest. Moreover, we compute the divisor class group of the ring $R_G(d)$ for two families of trees: the \textit{star} graph and the \textit{path} graph. Notice that these computations show that for a forest $G$ the bound for $R_G(d)$ being a UFD presented in Theorem \ref{ThmUFD} is sharp. We start this section by proving the normality.

\begin{theorem}
\label{ThmNormal}
    Let $G = (V, E)$ be a forest and $d \geq \Delta(G)+1$. Then, the LSS ring $R_G(d)$ is normal.
\end{theorem}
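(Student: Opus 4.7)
The plan is to verify Serre's criterion $(R_1)+(S_2)$ for $R_G(d)$ by induction on $|V|$. Since $d\geq\pmd(G)=\Delta(G)$, Theorem \ref{CWThm} tells us $R_G(d)$ is a complete intersection and hence Cohen--Macaulay, so $(S_2)$ is automatic; only $(R_1)$ needs work. When $d\geq\Delta(G)+2$, Theorem \ref{ThmUFD} (noting $\pmd(G)+k(G)+1=\Delta(G)+2$ for a forest) already gives that $R_G(d)$ is a UFD and therefore normal, so the essential case is $d=\Delta(G)+1$. The base case $|V|=1$ is a polynomial ring, and an isolated vertex splits off harmlessly as a polynomial extension.

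For the inductive step I would pick a leaf $n$ with unique neighbor $m$ and put $G'=G\setminus\{n\}$. Since $\Delta(G')\leq\Delta(G)$ the inductive hypothesis applies to $G'$, yielding $R_{G'}(d)$ normal. Applying Lemma \ref{RDcong} with $t=1$, and noting that by symmetry (permuting the column labels) the role of the ``last'' column can be played by any fixed $k\in[d]$, one obtains $R_G(d)_{y_{mk}}\cong R_{G'}(d)[x_j:j\neq k]_{y_{mk}}$, which is normal for every $k$ because normality is preserved under polynomial extension and localization. Consequently, every height-$1$ prime $P$ of $R_G(d)$ that omits at least one $y_{mk}$ gives a localization $R_G(d)_P$ that is one-dimensional and normal, hence a DVR, hence regular. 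What remains is to understand the height-$1$ primes containing $I:=(y_{m1},\ldots,y_{md})$. A direct computation identifies $R_G(d)/I\cong R_{G\setminus\{m,n\}}(d)[x_1,\ldots,x_d]$ (the vertex $n$ becomes isolated once $m$ is removed); the maximum degree of $G\setminus\{m,n\}$ is at most $\Delta(G)\leq d-1$, so Theorem \ref{CWThm} makes this quotient a domain and $I$ is prime of height $d-\delta(m)$ in $R_G(d)$. If $\delta(m)<\Delta(G)$ this height is already $\geq 2$ and no problematic $P$ exists, leaving the single critical case $\delta(m)=\Delta(G)=d-1$, where $I$ itself is the unique height-$1$ prime to analyse.

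The main obstacle is showing that $R_G(d)_I$ is regular in that critical case, which I plan to do through the Jacobian criterion. At the generic point of $V(I)$ every $y_{m,k}$ vanishes, so the $|E|\times nd$ Jacobian of $L_G(d)$ splits into two blocks supported on disjoint column sets: an ``$m$-block'' on the columns $(m,k)$, indexed by the $\Delta(G)$ edges through $m$, and the Jacobian of $L_{G\setminus\{m,n\}}(d)$ on the remaining columns. The second block attains its full generic rank $|E|-\Delta(G)$ because $R_{G\setminus\{m,n\}}(d)$ is a generically regular complete-intersection domain. The first block reduces to a $\Delta(G)\times d$ matrix whose rows are $(x_1,\ldots,x_d)$ (contributed by the edge $\{m,n\}$) and the vectors $Y_{w_i}=(y_{w_i,1},\ldots,y_{w_i,d})$ for the remaining neighbors $w_1,\ldots,w_{\Delta(G)-1}$ of $m$; expanding its $\Delta(G)\times\Delta(G)$ minor on the first $\Delta(G)$ columns along the row of $x$'s produces a term $\pm x_{\Delta(G)}\cdot\mu$, where $\mu$ is a generic $(\Delta(G)-1)\times(\Delta(G)-1)$ minor of the $y_{w_i,k}$'s. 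By the monomial-degree argument used in Lemma \ref{LemmaDnonZero} one sees $\mu\neq 0$ in $R_{G\setminus\{m,n\}}(d)$, so the whole minor is nonzero in the polynomial ring $R_{G\setminus\{m,n\}}(d)[x_1,\ldots,x_d]$; this gives the $m$-block rank $\Delta(G)$, and the two blocks together yield the desired full rank $|E|$, verifying $(R_1)$.
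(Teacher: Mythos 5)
Your overall architecture (induction on $|V|$, Serre's criterion with $(S_2)$ free from the complete-intersection property, localization at the neighbor's variables via Lemma \ref{RDcong} to dispose of all height-one primes missing some $y_{mk}$, and the identification $R_G(d)/I\cong R_{G\setminus\{m,n\}}(d)[x_1,\dots,x_d]$ showing $I$ is prime of height $d-\delta(m)$) matches the paper's proof closely up to the critical case. Where you genuinely diverge is in handling the height-one prime $I$ when $\delta(m)=\Delta(G)=d-1$. The paper at that point does not stay with the chosen leaf: it argues that if every leaf's neighbor has all its variables inside $P$ then $G$ must be a star, and for the star it proves $PR_G(d)_P$ is principal by an explicit adjugate computation (multiplying $Y\cdot(y_{11},\dots,y_{1(n-1)})^{T}=-y_{1n}(y_{2n},\dots,y_{nn})^{T}$ by $\adj(Y)$ and using $\det Y\notin P$). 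Your Jacobian-criterion argument treats the general critical case directly, without the leaf-switching reduction to the star, and your block decomposition of the Jacobian at the generic point of $V(I)$ together with the Laplace expansion $\sum_j \pm x_j\mu_j$ (nonzero because the $x_j$ are polynomial variables over the domain $R_{G\setminus\{m,n\}}(d)$ and some $\mu_j\neq 0$ by the monomial argument of Lemma \ref{LemmaDnonZero}) is a clean and arguably more uniform way to get $(R_1)$; the paper's route is more elementary and characteristic-free but leans on a case analysis whose reduction to the star is the least transparent step.

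There is one step you need to repair: the assertion that the second Jacobian block has rank $|E|-\Delta(G)$ ``because $R_{G\setminus\{m,n\}}(d)$ is a generically regular complete-intersection domain.'' Generic regularity is automatic for any domain (the localization at $(0)$ is a field) and does \emph{not} imply that the Jacobian attains rank equal to the codimension at the generic point; that is generic \emph{smoothness}, which can fail for regular domains over imperfect fields in positive characteristic. What saves the argument is that $L_{G\setminus\{m,n\}}(d)$ remains a radical ideal after extending scalars to $\overline{\mathbb{K}}$ (Theorem \ref{CWThm} holds over any field), so $R_{G\setminus\{m,n\}}(d)$ is geometrically reduced and irreducible, hence its smooth locus is open dense and contains the generic point; only then does the Jacobian of $L_{G\setminus\{m,n\}}(d)$ have full rank $|E|-\Delta(G)$ there. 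You should either invoke geometric reducedness explicitly or exhibit a nonvanishing maximal minor of that block directly (e.g.\ by the same positive-matching/term-order bookkeeping). With that justification supplied, the proof is correct; note also that you are only using the safe direction of the Jacobian criterion (full rank modulo $P$ implies $R_P$ regular), which is valid over an arbitrary field.
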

\begin{proof}
Notice that it is enough to focus on connected components of $G$ as the LSS-rings on each component are completely independent. Then, we consider the case in which $G$ is a tree. Also notice that by virtue of Theorem \ref{ThmUFD} it is enough to focus on the case $d = \Delta(G)+1$.

We use induction on $n$, the number of vertices of the tree $G$. If $n=1$, we have that $R_G(d) = S$ is a polynomial ring and then it is normal. Let $n>1$ and suppose $R_{G'}(d)$ is normal for $G'$ the subgraph of $G$ obtained by removing the leaf $k$ from $G$. Let us show, then, that $R_G(d)$ is normal.
By Serre's criterion for normality \ref{SerreNormal} we have to prove that the ring $R_G(d)$ has the properties $R_1$ and $S_2$. By Theorem \ref{CWThm} we have that as $d = \Delta(G)+1$, $L_G(d)$ is a complete intersection, hence $R_G(d)$ is Cohen-Macaulay and has the property $S_2$. Then, it is enough to show that $R_G(d)$ has the property $R_1$. To do so, take $P$ a prime ideal of $R_G(d)$ such that $\hgt P =1$ and let $h$ be the only neighbor of $k$ in $G$. We want to show that the ideal $P R_G(d)_P$ is a principal ideal. We consider two cases:
    \begin{enumerate}
        \item [\textbf{Case 1:}] There is a variable associated to vertex $h$, say $y_{hd}$, such that $y_{hd} \notin P$. Then, notice that by Lemma \ref{RDcong} we have
        $$R_G(d)_{y_{hd}} \cong R_{G'}(d)[y_{kj} \hspace{2pt} | \hspace{2pt} j\in [d-1]][y_{hd}^{-1}].$$
        The ring $R_{G'}(d)$ is normal by inductive hypothesis, then, being a polynomial extension of a normal ring, we have that $R_G(d)_{y_{hd}}$ is also normal. Localizing at $P$ we can conclude that $PR_G(d)_P$ is principal as wanted.
        \item [\textbf{Case 2:}] All variables associated to $h$ are in $P$.
        Let $I_h = (y_{h1}, \dots, y_{vh}) \subseteq P$ be the ideal generated by all the variables associated to vertex $h$. Notice that as 
        $$\Large\sfrac{R_G(d)}{I_h} \cong R_{G-\{h\}}(d),$$
        we have that $I_h$ is a prime ideal of height $\hgt (I_h) = d - \delta(h)$. Therefore, $\hgt (I_h) = 1$ and $\Delta(G) = \delta(h)$, i.e. the vertex $h$ has maximal degree in $G$.
        Then, we find another leaf $k'$ with neighbor $h'$ in $G$ such that there is a variable associated to it not in $P$ and return to Case 1.\\
        If there is no such $k'$, we can conclude that $G$ is a $(n-1)$-star graph.
        Without loss of generality, assume that the vertex $1$ is the central vertex and that $P$ is generated by the variables of associated to it. Then, $d = \Delta(G) + 1 = n$ and $L_G(d)$ has $(n-1)$ generators of shape
        $$f_{1j} = y_{11}y_{j1} + y_{12}y_{j2} + \dots + y_{1n}y_{jn},$$
        for $j = 2, \dots, n$.
        Also, $I_1 = (y_{11}, y_{12}, \dots, y_{1n}) = P$. We have to then prove that $PR_G(d)_{P}$ is principal. Writing the generators of the ideal $L_G(d)$ in matrix form, we have that in $R_G(d)$
            \begin{equation*}
                \begin{pmatrix}
      {y_{21}} & {y_{22}} & \dots  & {y_{2n}}\\
      {y_{31}} & {y_{32}} & \dots  & {y_{3n}}\\
      \vdots & & \ddots & \\
      {y_{n1}} & {y_{n2}} & \dots  & {y_{nn}}\\
     \end{pmatrix}
     \begin{pmatrix}
      {y_{11}} \\
      {y_{12}} \\
      \vdots \\
      {y_{1n}} \\
     \end{pmatrix} =0.
            \end{equation*}
            Then, by moving the last column of the matrix to the right hand side, we get
            \begin{equation}
            \label{eqY}
                \begin{pmatrix}
      {y_{21}} & {y_{22}} & \dots  & {y_{2(n-1)}}\\
      {y_{31}} & {y_{32}} & \dots  & {y_{3(n-1)}}\\
      \vdots & & \ddots & \\
      {y_{n1}} & {y_{n2}} & \dots  & {y_{n(n-1)}}\\
     \end{pmatrix}
     \begin{pmatrix}
      {y_{11}} \\
      {y_{12}} \\
      \vdots \\
      {y_{1(n-1)}} \\
     \end{pmatrix}
     = -
     \begin{pmatrix}
      {y_{2n}} \\
      {y_{3n}} \\
      \vdots \\
      {y_{nn}} \\
     \end{pmatrix} y_{1n}.
            \end{equation}
                Denote by $Y$ the first matrix in equation \ref{eqY} and notice that $Y$ is a $(n-1)$-squared matrix. If we multiply both sides of equation \ref{eqY} by $\adj(Y)$ we obtain that 
                \begin{equation*}
                    \det(Y) \begin{pmatrix}
      {y_{11}} \\
      {y_{12}} \\
      \vdots \\
      {y_{1(n-1)}} \\
     \end{pmatrix} \in (y_{1n}).
                \end{equation*}
                Notice that $Y$ does not use any of the variables generating $P$, then $\det(Y) \notin P$. Hence, when localizing at $P$ we have that $\det(Y)$ is invertible and therefore $y_{11}, y_{12}, \dots, y_{1(n-1)} \in (y_{1n})$. We can then conclude that $PR_G(d)_P$ is principal.

    \end{enumerate}

\end{proof}

Next, we recall some known results which are useful for the computation of divisor class groups. Theorem \ref{NagataThm} and Corollary \ref{Nagata2} are due to Nagata. For a complete proof of them the reader is referred to \cite{fossum2012divisor}.

\begin{theorem}
\label{NagataThm}
Let $R$ be a normal domain, and $S$ a multiplicatively closed subset of $R$. There exists the exact sequence of groups:

$$ 0 \quad \rightarrow \quad U \rightarrow \quad \Cl(R) \quad \xrightarrow{\hat{g}} \quad \Cl(R_S) \quad \rightarrow \quad 0,$$
where the map $\hat{g}$ sends $\cl(I)$ to $\cl(IR_S)$ and $U$ is the subgroup of $\Cl(R)$ generated by the classes of height $1$ prime ideals $P$ of $R$ such that $P \cap S \neq \emptyset.$
\end{theorem}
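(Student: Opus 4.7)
The plan is to work directly with divisors and descend to divisor classes. First, I would use the description $\Cl(R) = \text{Div}(R)/\text{Princ}(R)$, where $\text{Div}(R)$ is the free abelian group on the set $X^{(1)}(R)$ of height one prime ideals of $R$, and similarly for $R_S$. A standard fact is that the height one primes of $R_S$ correspond bijectively to those $P \in X^{(1)}(R)$ with $P \cap S = \emptyset$, via $P \mapsto PR_S$.

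Using this bijection I would define a group homomorphism $\phi : \text{Div}(R) \to \text{Div}(R_S)$ on generators by
\[ \phi(P) = \begin{cases} PR_S & \text{if } P \cap S = \emptyset, \\ 0 & \text{otherwise}. \end{cases} \]
This is surjective by the correspondence above. For any $f$ in the common fraction field $K$ of $R$ and $R_S$, and any $P \in X^{(1)}(R)$ with $P \cap S = \emptyset$, the valuation $v_P(f)$ equals $v_{PR_S}(f)$ because $R_P = (R_S)_{PR_S}$; hence $\phi$ sends $\text{div}_R(f)$ to $\text{div}_{R_S}(f)$, so it descends to a surjective map $\hat{g} : \Cl(R) \twoheadrightarrow \Cl(R_S)$ with the prescribed action $\cl(I) \mapsto \cl(IR_S)$.

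The remaining step is to identify $\ker \hat{g}$. The inclusion $U \subseteq \ker \hat{g}$ is immediate, since for any height one prime $P$ meeting $S$ one has $PR_S = R_S$ and hence $\hat{g}(\cl(P)) = 0$. For the reverse inclusion, suppose $D = \sum n_P P \in \text{Div}(R)$ satisfies $\hat{g}(\cl(D)) = 0$, so $\phi(D) = \text{div}_{R_S}(f)$ for some $f \in K^*$. Then $\phi(D - \text{div}_R(f)) = 0$, but the kernel of $\phi$ is precisely the free subgroup on primes meeting $S$; consequently $\cl(D) = \cl(D - \text{div}_R(f))$ lies in $U$. I expect the main subtlety to be the compatibility of valuations under localization used in checking that $\phi$ descends to $\hat{g}$; once this is in place, the remainder of the argument is a short diagram chase at the level of divisor groups.
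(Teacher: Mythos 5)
Your proposal is correct and is exactly the standard argument: the paper itself does not prove this statement but defers to Fossum's book \cite{fossum2012divisor}, where the proof proceeds precisely as you describe, by defining the surjection on the free group of divisors via the bijection between height one primes of $R_S$ and height one primes of $R$ disjoint from $S$, checking compatibility with principal divisors through $R_P = (R_S)_{PR_S}$, and identifying the kernel with the subgroup supported on primes meeting $S$. No gaps.
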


\begin{corollary}[\cite{ConcaThesis}]
\label{Nagata2}
Let $R$ be a normal domain and $B$ a factorial subring of $R$. Suppose there is an element $x \in B$ such that $B[x^{-1}] = R[x^{-1}].$ Let $x = x_1 \dots x_l$ be the factorization of $x$ in $B$ as a product of irreducible elements of $B$. Denote $P_1, \dots, P_r$ the minimal prime ideals of $(x)$ in $R$. Then $\Cl(R)$ is generated by $\cl(P_1), \dots, \cl(P_r)$. Furthermore, the syzygies between the given generators of $\Cl(R)$ are linear combinations of the syzygies
$$\sum \limits_{i=1}^{r} v_{P_i}(x_k)\cl(P_i) = 0, \quad \text{ for } k = 1, \dots, l.$$
\end{corollary}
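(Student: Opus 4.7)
The plan is to derive this corollary directly from Nagata's sequence of Theorem \ref{NagataThm}, applied to the multiplicative set $S = \{1, x, x^2, \ldots\}$. Under the hypothesis $B[x^{-1}] = R[x^{-1}]$ we have $R_S = B[x^{-1}]$, which is a UFD because $B$ is, and so $\Cl(R_S)=0$. Nagata's sequence then forces $\Cl(R) = U$, where $U$ is generated by the classes of the height-one primes of $R$ meeting $S$. Since every prime ideal is radical, meeting $S$ is equivalent to containing $x$, and in a Krull domain the height-one primes containing $x$ are exactly the minimal primes $P_1, \ldots, P_r$ of $(x)$. This establishes that $\cl(P_1), \ldots, \cl(P_r)$ generate $\Cl(R)$.

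For the relations, the easy direction is to check that each $\sum_i v_{P_i}(x_k)\cl(P_i)$ actually vanishes in $\Cl(R)$: since $x_k \in B \subseteq R$, the principal divisor $\operatorname{div}(x_k)$ in $R$ is supported on those height-one primes of $R$ containing $x_k$, all of which must contain $x$ and hence lie among the $P_i$; thus $\operatorname{div}(x_k) = \sum_i v_{P_i}(x_k) P_i$ and its class in $\Cl(R)$ is zero.

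The harder task, and the main obstacle, is to show that these are the only relations. I would argue as follows. Suppose $\sum a_i \cl(P_i)=0$ in $\Cl(R)$, so that $\sum a_i P_i = \operatorname{div}(f)$ for some nonzero $f$ in the fraction field of $R$. Because the support of this divisor lies in $\{P_1, \ldots, P_r\}$, the element $f$ is a unit in $R_Q$ for every height-one prime $Q \neq P_i$; since $R$ is Krull, this means $f, f^{-1} \in R[x^{-1}] = B[x^{-1}]$, i.e.\ $f$ is a unit of $B[x^{-1}]$. The decisive step is then to use that $B$ is a UFD to conclude that the unit group of $B[x^{-1}]$ is $B^{*} \cdot \langle x_1, \ldots, x_l \rangle$, so $f = u \cdot x_1^{m_1} \cdots x_l^{m_l}$ for some $u \in B^{*}$ and integers $m_k$. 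Comparing $\operatorname{div}(f) = \sum_k m_k \operatorname{div}(x_k)$ with $\sum_i a_i P_i$ yields $a_i = \sum_k m_k v_{P_i}(x_k)$, which expresses the chosen relation as an integer linear combination of the claimed syzygies. Everything else is formal bookkeeping inside Nagata's sequence; the only nonformal ingredient is the description of the units of the localized UFD $B[x^{-1}]$, and that is precisely where the factoriality hypothesis on $B$ is used in an essential way.
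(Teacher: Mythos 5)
The paper does not actually prove this corollary---it is quoted from \cite{fossum2012divisor} and \cite{ConcaThesis}---but your derivation from Theorem \ref{NagataThm} is correct and is precisely the standard route: generation follows from $\Cl(B[x^{-1}])=\Cl(R[x^{-1}])=0$ together with the identification of the height-one primes meeting $\{x^n\}$ with the minimal primes of $(x)$, and completeness of the relations follows from your description of the unit group of $B[x^{-1}]$ as $B^{*}\cdot\langle x_1,\dots,x_l\rangle$, which you justify correctly via unique factorization in $B$. I see no gaps.
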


We also recall a well known property of ideals next.

\begin{proposition}[Modular Law]
\label{ModLaw}
    Let $I,J,K$ be ideals of a ring $R$. If $J \subseteq I$, then $I \cap (J + K) = J + (I\cap K)$.
\end{proposition}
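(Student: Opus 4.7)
The plan is to prove this standard identity by a direct double inclusion argument, noting that only one of the two directions actually uses the hypothesis $J \subseteq I$.

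For the easy inclusion $J + (I \cap K) \subseteq I \cap (J+K)$, I would observe that each summand on the left lies in the right-hand side. Indeed, $J \subseteq I$ by hypothesis and $J \subseteq J+K$ trivially, so $J \subseteq I \cap (J+K)$; similarly $I \cap K \subseteq I$ by definition and $I \cap K \subseteq K \subseteq J+K$, so $I\cap K \subseteq I\cap (J+K)$. Since $I \cap (J+K)$ is an additive subgroup, it contains the sum $J + (I\cap K)$.

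For the reverse inclusion, which is the substantive content of the statement, I would take an arbitrary $x \in I \cap (J+K)$ and write $x = j + k$ with $j \in J$ and $k \in K$. The crucial step is then to use the hypothesis: since $j \in J \subseteq I$ and $x \in I$, the difference $k = x - j$ also lies in $I$. Combined with $k \in K$ this gives $k \in I \cap K$, hence $x = j + k \in J + (I \cap K)$, as desired.

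There is no real obstacle here; the only place the hypothesis $J \subseteq I$ enters is in concluding $k \in I$, and this one step is precisely where the assumption is essential (without it the identity fails in general). The statement is a purely lattice-theoretic identity about the lattice of ideals of $R$, so it uses none of the LSS machinery developed earlier, and is being recorded here only so that it can be invoked as a routine tool in the subsequent divisor class group computations of Section~\ref{Sec4}.
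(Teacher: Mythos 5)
Your proof is correct and complete: the double-inclusion argument is the standard one, and you correctly isolate the single point where the hypothesis $J \subseteq I$ is needed (deducing $k = x - j \in I$). The paper simply records this Modular Law without proof, citing it as a well-known property, so there is no argument in the text to compare against; your write-up supplies exactly the routine verification the paper omits.
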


\subsection{$(n-1)$-star}
\label{SubSecS}
Let $G$ be the $(n-1)$-star graph, that is, the graph with $n$ vertices and $n-1$ edges, all of them adjacent to the same vertex, say vertex $n$. We have already mentioned that for a tree $\pmd(G)= \Delta(G)$ and $k(G) = 1$. Hence, Theorem \ref{ThmUFD} implies that $R_G(d)$ is UFD for $d \geq \Delta(G) +2$. When $d = \Delta(G) +1 = n$ we know by Theorem \ref{ThmNormal} that $R_G(n)$ is normal. Our goal for this subsection is to compute its divisor class group.

\begin{proposition}
\label{ThmDCGStar}
    Let $G = ([n], E)$ be the $(n-1)$-star graph. If $d = n$ then the divisor class group of $R_G(n)$ is
    $$\Cl(R_G(n)) \cong \mathbb{Z}.$$  
\end{proposition}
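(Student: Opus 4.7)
The plan is to apply Corollary \ref{Nagata2} to the inclusion $B \subseteq R_G(n)$, where $B = R_{G'}(n)[y_{n1}]$ and $G'$ is the star with its central vertex $n$ removed. Since $G'$ has no edges, $B$ is a polynomial ring in $n(n-1)+1$ variables, hence a UFD; and Lemma \ref{RDcong} (with $t = n-1$, so that $d-t = 1$) gives $R_G(n)_D = B_D$. The element $D = \det A$ from (\ref{D}) is irreducible in $B$ as the determinant of a generic square matrix of independent variables. Corollary \ref{Nagata2} therefore reduces the problem to identifying the minimal primes of $(D) \subseteq R_G(n)$ and computing their valuations: $\Cl(R_G(n))$ is generated by the classes $\cl(P_1), \ldots, \cl(P_r)$ of those primes modulo the single relation $\sum_i v_{P_i}(D)\, \cl(P_i) = 0$.

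I claim $(D)$ has exactly two minimal primes $P_1, P_2$, both of height $1$. The first, $P_1$, is the ``other'' minimal prime of $(y_{n1})$: applying Theorem \ref{CWThm} at $d = n-1 = \pmd(G)$, the image of $L_G(n)$ in $S/(y_{n1})$ becomes the LSS ideal of the $(n-1)$-star at dimension $n-1$, tensored with a polynomial ring on the $y_{i1}$, and this is a radical complete intersection whose underlying variety has exactly two irreducible components, yielding two minimal primes of $(y_{n1})$ in $R_G(n)$: the visibly prime $P_a = (y_{n1}, \ldots, y_{nn})$ (whose quotient is a polynomial ring) and a second prime $P_1$. Multiplying the matrix identity $A(y_{n2}, \ldots, y_{nn})^T = -y_{n1}(y_{11}, \ldots, y_{n-1,1})^T$ (an immediate consequence of the LSS relations) by $\adj(A)$ yields
\[
D \cdot y_{nj} = -y_{n1} \cdot g_j \qquad (j = 2, \ldots, n),
\]
where $g_j = \pm D_j$ with $D_j := \det \widetilde A[\,\{1, \ldots, n\} \setminus \{j\}\,]$ a maximal minor of the $(n-1) \times n$ matrix $\widetilde A = (y_{ij})_{i \leq n-1,\ j \leq n}$. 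Since $P_1 \neq P_a$ and $P_1$ is prime, some $y_{nj}$ lies outside $P_1$, forcing $D \in P_1$. The second prime $P_2$ is the image in $R_G(n)$ of the determinantal ideal $I_{n-1}(\widetilde A)$ of maximal minors; its vanishing locus in $V(L_G(n))$ is irreducible of codimension $1$, being (over the open stratum $\{\mathrm{rank}\, \widetilde A = n-2\}$) a rank-$2$ vector bundle with fibre $\ker \widetilde A$. A dimension count confirms $V(P_1) \cup V(P_2) = V(D) \cap V(L_G(n))$, so no further minimal primes arise.

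The valuations are both $1$. For $P_1$: since $P_a \not\subseteq P_1$, the decomposition $(y_{n1}) = P_a \cap P_1$ localizes to $(y_{n1}) R_{P_1} = P_1 R_{P_1}$, so $y_{n1}$ is a uniformizer; generically on $V(P_1)$ the matrix $\widetilde A$ has rank $n-1$, so for some $j \geq 2$ both $y_{nj}$ and $g_j$ are units in $R_{P_1}$, and the identity $D y_{nj} = -y_{n1} g_j$ then yields $v_{P_1}(D) = 1$. For $P_2$: both $(y_{n1}, \ldots, y_{nn})$ and the vector of signed minors $((-1)^{j+1} D_j)_j$ lie in the generically $1$-dimensional $\ker \widetilde A$, so (in the domain $R_G(n)$) the $2 \times 2$ minors of the matrix formed by these two vectors vanish, giving identities $y_{nk}\, D_j = (-1)^{j+k}\, y_{nj}\, D_k$. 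Generically on $V(P_2)$ we have $v \neq 0$, so some $y_{nk}$ is a unit in $R_{P_2}$; then every $D_j$ is an $R_{P_2}$-multiple of $D = D_1$, so $P_2 R_{P_2} = (D) R_{P_2}$ and $v_{P_2}(D) = 1$.

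Combining, the single relation reads $\cl(P_1) + \cl(P_2) = 0$, yielding $\Cl(R_G(n)) \cong \mathbb{Z}^2/\mathbb{Z}(1,1) \cong \mathbb{Z}$. The main technical obstacle will be rigorously establishing that $P_2$ is prime and of height exactly $1$; I expect this to follow from the explicit rank-$2$ vector-bundle description over the well-known irreducible Cohen--Macaulay determinantal variety $\{\mathrm{rank}\, \widetilde A \leq n-2\}$, together with the dimension count ruling out extra components.
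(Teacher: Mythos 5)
Your route is genuinely different from the paper's: you localize at the $(n-1)\times(n-1)$ determinant $D$ attached to the central vertex, whereas the paper localizes at a single central variable $x_n$. With the paper's choice, the minimal primes of $(x_n)$ are governed by $L_G(n-1)=I_1(W\mathbf{x})$ in one fewer column, the modular law splits this as $(\mathbf{x})\cap(\det W, L_G(n-1))$, and the only non-obvious primality is that of $(\det W, I_1(W\mathbf{x}))$, which is literally Example 4 of Herzog \cite{Herzog_1974} and hence comes for free. Your choice buys a cleaner localization ($R_{G}(n)_D$ is a localized polynomial ring over the edgeless $R_{G'}(n)$, with no induction on factoriality of a smaller star), and your geometric picture of $V(D)\cap V(L_G(n))$ --- the closure of the line bundle $\ker\widetilde A$ over $\{\operatorname{rank}\widetilde A=n-1,\ D=0\}$ together with the rank-$2$ bundle over $\{\operatorname{rank}\widetilde A\le n-2\}$, both of the correct codimension, with unmixedness of $(D)$ coming from Cohen--Macaulayness to rule out further components --- is correct.

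The genuine gap is the one you flag yourself, and it is not routine: you need $P_2$, i.e.\ the ideal $L_G(n)+I_{n-1}(\widetilde A)$, to be prime, or at least generically reduced along its top-dimensional component. This is needed both to name it as a minimal prime of $(D)$ over an arbitrary field and, more seriously, because your computation of $v_{P_2}(D)=1$ only establishes $(D)R_{P_2}=I_{n-1}(\widetilde A)R_{P_2}$ and then silently identifies $I_{n-1}(\widetilde A)R_{P_2}$ with $P_2R_{P_2}$. The vector-bundle description gives irreducibility of the underlying set but says nothing about the scheme structure; an actual argument (Huneke's criterion on a symmetric algebra, as the paper uses for path graphs, or the theory of varieties of complexes) is required, and nothing in the paper's toolbox hands it to you directly. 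A second, smaller gap: your identification of $P_1$ and the computation $v_{P_1}(D)=1$ rest on $(y_{n1})$ having exactly two minimal primes $P_a$ and $P_1$ with $(y_{n1})=P_a\cap P_1$; Theorem \ref{CWThm} only gives that $L_G(n)+(y_{n1})$ is a radical complete intersection, and the two-component claim needs the same modular-law-plus-Herzog argument the paper runs for $(x_n)$ (it does transfer by permuting columns, but you must say so). Until $P_2$ is handled, the proof is incomplete.
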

\begin{proof}
\label{PfPropStar}
Consider the $(n-1)$-star graph $G$ with central node $n$ associated to the $d=n$ variables $x_1, \dots, x_n$ and leaves $1, \dots, (n-1)$ associated to the variables $w_{1j}, \dots, w_{(n-1)j}$ respectively for $j \in \{1, \dots, n\}$. 
For $i= 1, \dots, (n-1)$, the polynomials
$$ f_i = \sum \limits_{j=1}^n w_{ij} x_j$$
define the  LSS -ideal $L_G(n) = (f_1, \dots, f_{(n-1)})$ on the polynomial ring
$$S= \mathbb{K}\left[x_1, \dots, x_n, w_{ij} \hspace{2pt} | \hspace{2pt} i \in [n-1], j \in [n]\right].$$
Let $R_G(n)$ be its respective LSS-ring. Let $G'$ be the subgraph of $G$ obtained by removing the leaf $k$ from $G$. Notice that $\pmd(G') = \pmd(G) - 1 = n-2$ and $k(G') = 1$, therefore by Theorem \ref{ThmUFD} we have that $R_G'(n)$ is a unique factorization domain. Using Lemma \ref{RDcong} we have that 
$$R_G(n)_{x_n} \cong R_{G'}(n)[w_{kj} \hspace{2pt} | \hspace{2pt} j\in [n-1]]_{x_n}$$ and therefore $R_G(n)_{x_n}$ is a UFD. This means, 
$$\Cl(R_G(n)_{x_n}) = 0.$$

Using Nagata's Corollary (\ref{Nagata2}) we have that
$$\Cl(R_G(n)) = \langle [P] \hspace{1pt} | \hspace{1pt} P\in \Min((\overline{x_n}))\rangle,$$
therefore we must calculate the minimal primes of $(L_G(n) + (x_n))$ in $S$.

Notice that $L_G(n) + (x_n) = L_G(n-1) + (x_n)$ in $S$. Since $L_G(n-1)$ does not involve the variable $x_n$ we may as well identify the minimal primes of $L_G(n-1)$.

Let $P$ be a prime ideal of $S$ such that $L_G(n-1) \subseteq P$. Let $W = (w_{ij})$ be the matrix which entries are the variables $w_{ij}$ with $i= 1, \dots, (n-1)$ and $ j = 1, \dots, (n-1)$ and let $\mathbf{x}$ be the column vector of variables $x_i$ with $i= 1, \dots, (n-1)$. We can write $L_G(n-1) = I_1(W\mathbf{x})$ where $I_1(W\mathbf{})$ is the ideal generated by the $1$-minors of $W\mathbf{x}$. Then we have $W\mathbf{x} = 0$ modulo $L_G(n-1)$. Multiplying both sides by $\adj W$ we obtain $(\det W)\mathbf{x} = 0$. This is $(det W)\mathbf{x} \in L_G(n-1) \subseteq P$. As $P$ is a prime ideal we have that either $det W \in P$ or $x \in P$. This leads us to make the following claim:

\textbf{Claim:} The primary decomposition of $L_G(n-1)$ is
$$L_G(n-1) = (\mathbf{x}) \cap (\det W, L_G(n-1)).$$

Notice that $L_G(n-1) \subseteq (\mathbf{x})$.
Let us prove the claim.

Using the Modular Law (\ref{ModLaw}), we have
$$(\mathbf{x}) \cap (\det W, L_G(n-1)) = L_G(n-1) + ((\mathbf{x}) \cap (\det W)).$$
Then, as $(\mathbf{x})$ and $(\det W)$ use different variables we have that $((\mathbf{x}) \cap (\det W)) = ((\mathbf{x})\cdot(\det W)) \subseteq L_G(n-1)$. Therefore we get
$$(\mathbf{x}) \cap (\det W, L_G(n-1)) = L_G(n-1).$$

Now, we show that both ideals are prime. Clearly $(\mathbf{x})$ is a prime ideal as it is generated by independent variables. For $\mathcal{G} = (\det W, L_G(n-1))$, notice that it is exactly the ideal presented by Herzog in Example $4$ of \cite{Herzog_1974}. In this example, Herzog proves indeed that $\mathcal{G}$ is a prime ideal.
Then we have $\Min(L_G(n-1)) = \{(\mathbf{x}), (\det W, L_G(n-1))\}$ which means
$$\Min (L_G(n) + (x_n)) = \Min(L_G(n-1) + (x_n)) = \{(\mathbf{x}, x_n), (\det W, L_G(n-1), x_n)\}.$$

Let us define the ideals  of $R_G(n)$,
\begin{align*}
    P_1 &= \Large\sfrac{(\mathbf{x}, x_n)}{L_G(n)},\\
    P_2 &= \Large\sfrac{(\det W, L_G(n-1), x_n)}{L_G(n)}.
\end{align*}
We have then that $\Min(\overline{x_n}) = \{P_1, P_2\}$ and hence
$$\Cl(R_G(n)) = \langle [P_1], [P_2] \rangle.$$

From Corollary\ref{Nagata2} we have that the syzygies between the generators are linear combinations of 
$$ v_{P_1}(x_n) [P_1] + v_{P_2}(x_n)[P_2] = 0,$$
where $v_{P_1}$ and $v_{P_2}$ are the valuations on the discrete valuation domains $R_G(n)_{P_1}$ and $R_G(n)_{P_2}$ respectively. Notice that $P_1R_G(n)_{P_1}$ is principal, then we can write $P_1R_G(n)_{P_1} = (y)$ for an element $y \in R_G(n)_{P_1}$.
Then as $(x_n) = P_1 \cap P_2$ is radical in $R_G(n)$, its localization in $P_1$ is given by
$$(x_n)R_G(n)_{P_1} = P_1R_G(n)_{P_1} \cap P_2R_G(n)_{P_1} = P_1R_G(n)_{P_1}$$
as $P_iR_G(n)_{P_j} = R_G(n)_{P_j}$ for all $i\neq j$. Then, taking $y = x_n$ we have that the valuation $v_{P_1}(x_n) = 1$. Analogously we have that $v_{P_2}(x_n) = 1$.
We can then conclude that
$$\Cl(R_G(n)) = \Large \sfrac{\mathbb{Z}^2}{(1,1)} \cong \mathbb{Z}.$$

\end{proof}

\subsection{$n$-path}
\label{SubSecP}

In this subsection we will calculate the divisor class group of the ring $R_G(d)$ associated to another family of graphs known as the $n$-path graphs (or linear graphs). As the name suggests, an $n$-path graph is a connected graph with $n$ vertices which can be listed in the order $1, \dots, n$ in such a way that the edges are $\{i, i+1\}$ for $i = 1, \dots, n-1$. Notice that for $n\geq 3$, the $n$-path $G$ has $\Delta(G) = 2$, then to calculate the divisor class group of $R_G(d)$ we will fix $d = \Delta(G) + 1 = 3$.
In an attempt to make the document more readable and comprehensive we will first calculate the divisor class group for a $4$-path. This example exhibits the main ideas behind the proof of Proposition \ref{ThmDCGPath}. We will first state a useful proposition and then exhibit the $4$-path example.

\begin{proposition}
\label{PropMultip}
    Let $R$ be a graded ring, $I$ a homogeneous ideal of $R$ and $P_i \in \Min(I)$ for $i= 1, \dots, n$ and $P_i \neq P_j$ for $i\neq j$. If $\hgt P = \hgt I$ for all $P \in \Ass\left(\large \sfrac{R}{I}\right)$ and
    $$\sum\limits_{i=1}^n e \left( \large \sfrac{R}{P_i} \right) \geq e\left(\large\sfrac{R}{I}\right)$$
    then, $I$ is radical and $\Min (I) = \{P_1, \dots, P_n\}$.
\end{proposition}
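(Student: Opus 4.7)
The plan is to exploit the additivity of the multiplicity (degree) along a prime filtration, which in the equidimensional case takes the clean form of the associativity formula. First I would note that the hypothesis $\hgt P = \hgt I$ for every $P \in \Ass(R/I)$ rules out embedded primes, so $\Ass(R/I) = \Min(I)$ and $R/I$ is equidimensional of dimension $\dim R - \hgt I$.

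Under this equidimensionality the associativity formula reads
$$e(R/I) \;=\; \sum_{P \in \Min(I)} \ell_{R_P}(R_P/IR_P)\, e(R/P).$$
Each length $\ell_{R_P}(R_P/IR_P)$ is a positive integer, and since $\{P_1,\dots,P_n\} \subseteq \Min(I)$ this gives
$$e(R/I) \;\geq\; \sum_{i=1}^{n} \ell_{R_{P_i}}(R_{P_i}/IR_{P_i})\, e(R/P_i) \;\geq\; \sum_{i=1}^{n} e(R/P_i) \;\geq\; e(R/I),$$
where the last inequality is the assumption. Therefore all of these inequalities must be equalities.

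From this sharpness I would read off two conclusions. First, no minimal prime of $I$ is missing from the list, i.e.\ $\Min(I) = \{P_1,\dots,P_n\}$, because any additional minimal prime $Q$ would contribute a strictly positive term $\ell_{R_Q}(R_Q/IR_Q)\,e(R/Q)$ to the associativity sum. Second, $\ell_{R_{P_i}}(R_{P_i}/IR_{P_i}) = 1$ for every $i$, which amounts to $IR_{P_i} = P_iR_{P_i}$. Combined with the absence of embedded primes, this forces the primary decomposition $I = \bigcap_{i=1}^n P_i$, and in particular $I$ is radical.

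The main point to take care of is the associativity formula itself in the graded setting used in the paper. I would justify it by taking a prime filtration $0 = M_0 \subset M_1 \subset \cdots \subset M_m = R/I$ with successive quotients $M_j/M_{j-1} \cong (R/Q_j)(-a_j)$ for primes $Q_j \in \Supp(R/I)$; the leading coefficient of the Hilbert polynomial of $R/I$ then picks out exactly those $j$ with $\dim R/Q_j$ maximal, which, by equidimensionality, are precisely the occurrences of the minimal primes of $I$, each counted with multiplicity $\ell_{R_{Q_j}}(R_{Q_j}/IR_{Q_j})$. This is the only nontrivial input; once it is in hand, the argument above is essentially an equality case analysis.
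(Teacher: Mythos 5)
Your proposal is correct and follows essentially the same route as the paper: both rest on the associativity formula $e(R/I)=\sum_{P}\ell\bigl((R/I)_P\bigr)e(R/P)$ over the (unmixed) associated primes, followed by the equality-case analysis showing that no further minimal primes can occur and that every local length equals $1$, whence $I$ is radical. The only difference is cosmetic — you sketch a prime-filtration justification of the associativity formula, while the paper simply cites \cite[Corollary 4.7.8]{BrunsHerzog98}.
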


\begin{proof}
    Let $I = Q_1 \cap Q_2 \cap \dots \cap Q_r$ be a primary decomposition of $I$. Then, without loss of generality, we can assume that $\sqrt{Q_i} = P_i$ for $i=1, \dots, n$. Let us also call $\sqrt{Q_i} = P_i$ for $i = n+1, \dots, r$.
    In general we know that 
    \begin{equation}
    \label{BrunsHerzog478}
        e\left( \large \sfrac{R}{I} \right) = \sum_{P} l \left( \left( \large \sfrac{R}{I}\right)_{P}\right) e \left( \large\sfrac{R}{P} \right),
    \end{equation}
    where the sum is taken over all prime ideals $P \in \Ass(\sfrac{R}{I})$ such that $\hgt P = \hgt I$. For a proof of equation \ref{BrunsHerzog478} the reader is addressed to \cite[Corollary 4.7.8.]{BrunsHerzog98}.
    As $\hgt P = \hgt I$ for all $P \in \Ass (\large \sfrac{R}{I})$ we have that
    $$e\left( \large \sfrac{R}{I} \right) = \sum_{i=1}^r l \left( \left( \large \sfrac{R}{I}\right)_{P_i}\right) e \left( \large\sfrac{R}{P_i} \right).$$
    Finally, as $\sum\limits_{i=1}^n e \left( \large \sfrac{R}{P_i} \right) \geq e\left(\large\sfrac{R}{I}\right)$, we can conclude that $r=n$ which means that $I$ has no other minimal primes. Also, $l \left( \left( \large \sfrac{R}{I}\right)_{P_i}\right) =1 $ for all $i=1, \dots , n$, which implies that $I$ is radical.
\end{proof}

\begin{example}
\label{ex1}
    Let $G$ be the linear graph with $4$ vertices and $3$ edges and $d = 3$. Notice that by Theorem \ref{CWThm}, $L_G(3) = (f_{12}, f_{23}, f_{34})$ is a prime radical complete intersection, but $R_G(3)$ it is not necessarily a unique factorization domain. Then, it makes sense to calculate its divisor class group. Let $\mathbf{x} = y_{23}y_{32}$, we will localize $R_G(3)$ at $\mathbf{x}$. Notice that $y_{23}$ and $y_{32}$ are variables associated to the vertices of maximal degree in $G$.
    
    Let $H$ be the graph consisting in just one edge $\{1,2\}$. Notice that $\pmd(H) = \Delta(H) = 1$ and $k(H)=1$, therefore by Theorem \ref{ThmUFD} we have that $R_H(3)$ is a unique factorization domain. Then, using iteratively Lemma \ref{RDcong} with $t=1$ we get that
    $$R_G(3)_{\mathbf{x}} \cong R_H(3)_\mathbf{x}[y_{31}, y_{33}, y_{41}, y_{42}].$$
    Hence, $R_G(3)_{\mathbf{x}}$ is a UFD. This means that $\Cl(R_G(3)_{\mathbf{x}}) = 0$ and using Nagata's Corollary (\ref{Nagata2}) we have that $\Cl(R_G(3)) = \langle [P] \hspace{1pt} | \hspace{1pt} P\in \Min((\overline{\mathbf{x}}))\rangle$.

    \textbf{Claim:} the set of minimal primes of $(\mathbf{x}) \subseteq R_G(3)$ is $\Min(\mathbf{x}) = \{P_2, P_3, Q_2, Q_3\}$, where
    \begin{align*}
        P_2 &= (y_{21}, y_{22}, y_{23}, f_{34}),\\
        P_3 &= (y_{31}, y_{32}, y_{33}, f_{12}),\\
        Q_2 &= \left( I_1\left(\begin{pmatrix}
        y_{11} & y_{12} \\ y_{31} & y_{32}
    \end{pmatrix} \begin{pmatrix}
        y_{21} \\ y_{22}
    \end{pmatrix} \right),  \det \begin{pmatrix}
        y_{11} & y_{12} \\ y_{31} & y_{32}
    \end{pmatrix}, y_{23}, f_{34} \right),\\
       Q_3 &= \left( I_1\left(\begin{pmatrix}
        y_{21} & y_{23} \\ y_{41} & y_{43}
    \end{pmatrix} \begin{pmatrix}
        y_{31} \\ y_{33}
    \end{pmatrix} \right),  \det \begin{pmatrix}
        y_{21} & y_{23} \\ y_{41} & y_{43}
    \end{pmatrix}, y_{32}, f_{12}\right).
    \end{align*}

We will divide the proof of the claim into two parts. In the first part we will show that $P_2, P_3, Q_2$ and $Q_3$ are prime ideals containing $(\mathbf{x})$. In the second part we will show that they are the only minimal primes of $(\mathbf{x})$.

\begin{enumerate}
    \item [Part I:]
It is clear that $P_2, P_3, Q_2, Q_3$ contain $(\mathbf{x})$, let us show that they are prime.
It is easy to see that $P_2$ and $P_3$ are prime as they are generated by unrelated variables and a polynomial using another set of variables.

Let us now show that $Q_2$ is prime. Let $A = \mathbb{K}[y_{11}, y_{12}, y_{21}, y_{22}, y_{23}, y_{31}, y_{32}]$, $B = A[y_{41}, y_{42}, y_{43}]$ and 
 $$J_0 = \left( I_1\left(\begin{pmatrix}
        y_{11} & y_{12} \\ y_{31} & y_{32}
    \end{pmatrix} \begin{pmatrix}
        y_{21} \\ y_{22}
    \end{pmatrix} \right) ,  \det \begin{pmatrix}
        y_{11} & y_{12} \\ y_{31} & y_{32}
    \end{pmatrix}, y_{23} \right) \subseteq A.$$
    
Notice that $f_{34}$ is a linear polynomial in $B$ with coefficients in $A/J_0$. Let $M$ be the $\Large \sfrac{A}{J_0}$-module with finite free resolution
\begin{center}
    \begin{tikzcd}
0 \arrow[r] & \Large\sfrac{A}{J_0} \arrow[r, "C"] & \left(\Large\sfrac{A}{J_0}\right)^3 \arrow[r] & M \arrow[r] & 0,
\end{tikzcd}
\end{center}
where $C= \begin{pmatrix}  y_{31}\\y_{32}\\y_{33} \end{pmatrix}$. Then, as $Q_2 = J_0 + f_{34}$ we have that $Sym_{\sfrac{A}{J_0}}(M) =\Large\sfrac{B}{Q_2}$.

Notice that $J_0 \subseteq A$ is prime as it is generated by the generators of the \textit{Herzog ideal}, seen in Subsection \ref{SubSecS} and \cite{Herzog_1974}, and a variable unrelated to such generators. Therefore, $A/J_0$ is a Noetherian domain. Also, $A/J_0$ is Cohen-Macaulay, so it satisfies the property $S_n$ for all $n$. Using \textit{Huneke's criterion} \cite{Huneke_1981} we have that $Sym_{A/J_0}(M)$ is a domain if and only if $\grade(I_1(c)) \geq 2$. By Cohen-Macaulayness, this translates to $Q_2$ being prime if and only if $\hgt((y_{31}, y_{32}, y_{33})) \geq 2$ in $A/J_0$. Notice that $y_{33}$ is not used in $J_0$, then $\hgt((y_{33})) = 1$. Also, there is no relation between $y_{31}, y_{32}$ and $y_{33}$ in $J_0$. We can conclude that the height of $(y_{31}, y_{32}, y_{33})$ in $A$ is at least $2$. Therefore, $Q_2$ is prime.
Showing that $Q_3$ is prime is analogous.

\item[Part II:]
We want to show that $\Min(\mathbf{x}) = \{P_2, P_3, Q_2, Q_3\}$. By Proposition \ref{PropMultip} we have to show that
\begin{enumerate}
    \item $\hgt Q = \hgt (\mathbf{x})$ for all $Q \in \Ass\left(\large \sfrac{R_G(3)}{(\mathbf{x})}\right)$
    \item $P_2, P_3, Q_2, Q_3 \in \Min(\mathbf{x})$
    \item $e\left(\large\sfrac{R_G(3)}{P_2}\right) + e\left(\large\sfrac{R_G(3)}{P_3}\right) + e\left(\large\sfrac{R_G(3)}{Q_2}\right) + e\left(\large\sfrac{R_G(3)}{Q_3}\right) \geq e\left(\large\sfrac{R_G(3)}{\mathbf{x}R_G(3)}\right)$
\end{enumerate}
To show part (a) recall that $R_G(3)$ is a Cohen-Macaulay domain. Then, as $\mathbf{x}$ is a non zero divisor we have that $\large\sfrac{R_G(3)}{(\mathbf{x})}$ is also Cohen-Macaulay. Hence the \textit{Unmixedness Theorem} \cite[Thm. 2.1.6.]{BrunsHerzog98} holds which means that as $\hgt (\mathbf{x}) = 1$, the ideal $(\mathbf{x})$ is unmixed and we can conclude part (a).

For part (b) we calculate the heights of $P_2, P_3, Q_2, Q_3$ in $R_G(3)$.
Notice that 
\begin{align*}
    \hgt \left( \large\sfrac{P_2}{L_G(3)}\right) &= \dim \left( \large\sfrac{S}{L_G(3)}\right) - \dim \left(\large\sfrac{S}{P_2}\right)\\
    &= (\#\text{variables in S}-\#E) - (\#\text{variables in S}-\#\text{generators of }P_2)\\
    &= \#\text{generators of }P_2 - \#E = 1.
\end{align*}
Equivalently we have that $\hgt \left( \large \sfrac{P_3}{L_G(3)}\right) = 1$.
On the other hand, Herzog \cite{Herzog_1974} proved that the ideal
\begin{equation}
    \label{G2}
    \mathcal{G}_2 = \left( I_1\left(\begin{pmatrix}
        y_{11} & y_{12} \\ y_{31} & y_{32}
    \end{pmatrix} \begin{pmatrix}
        y_{21} \\ y_{22}
    \end{pmatrix} \right) ,  \det \begin{pmatrix}
        y_{11} & y_{12} \\ y_{31} & y_{32}
    \end{pmatrix} \right)
\end{equation}

has height $2$ in $S$.

Let us define a weight vector $\omega$ in the following way
\begin{equation*}
  \omega(y_{ij}) = \begin{cases}
    1 \text{ for } i=3, j=3\\
    0 \text{ otherwise. }
\end{cases}  
\end{equation*}
Then $\ini_{\omega}(f_{34}) = y_{33}y_{44}$, while the other generators of $Q_2$ remain unchanged under taking the initial form with respect to $\omega$. Notice that $\ini_{\omega}(f_{34}), \ini_{\omega}(y_{23})$ is a regular sequence modulo $\mathcal{G}_2$, then by Proposition \ref{GBPropRS2} we have that $f_{34}, y_{23}$ is a regular sequence modulo $\mathcal{G}_2$.
Therefore $\hgt Q_2 = 4$ and we have that\begin{align*}
    \hgt \left( \large\sfrac{Q_2}{L_G(3)}\right) &= \dim \left( \large\sfrac{S}{L_G(3)}\right) - \dim \left(\large\sfrac{S}{Q_2}\right)\\
    &= (\#\text{variables in S}-\#E) - (\#\text{variables in S}-\hgt Q_2)\\
    &= \hgt Q_2 - \#E = 1.
\end{align*}
Analogously we have $\hgt \left( \large\sfrac{Q_3}{L_G(3)}\right)=1$. Hence in $R_G(3)$ we have
$$ht(P_2) = ht(P_3) = ht(Q_2) = ht(Q_3) = 1.$$
As $\hgt (\mathbf{x}) = 1$, we conclude that $P_2, P_3, Q_2, Q_3$ are minimal prime ideals containing the ideal $(\mathbf{x})$, that is $P_2, P_3, Q_2, Q_3 \in \Min (\mathbf{x})$.

For part (c) recall that $L_G(3)$ is a complete intersection generated by three polynomials of degree $2$, therefore
$$e\left(R_G(3)\right) = 2^3 = 8.$$
As $\mathbf{x}$ is a non zero divisor of $R_G(3)$ of degree $2$, we have
$$e\left(\large\sfrac{R_G(3)}{\mathbf{x}}\right) = 2 \cdot 8 = 16.$$
Notice that $P_2$ and $P_3$ are also complete intersection ideals with three degree $1$ generators and one degree $2$ generator. Therefore,
$$e\left(\large\sfrac{R_G(3)}{P_2}\right) = e\left(\large\sfrac{R_G(3)}{P_3}\right) = 2.$$

For calculating the multiplicity of $Q_2$ and $Q_3$, notice that for a generic $2\times 2$ matrix of variables $A$ and a generic $2$-vector $\mathbf{z}$ of variables not in $A$, the \textit{Herzog-type ideal} $\mathcal{G} = \left( I_1(A \mathbf{z}), \det A \right)$ coincides with the determinantal ideal $I_2$ of a generic $3\times 2$ matrix of variables. Therefore, it is not hard to see that in this case $\mathcal{G}_2$ has multiplicity $3$. A general formula for calculating the multiplicity for determinantal ideals is presented in \cite{HERZOG92}.
Now, notice that $y_{23},f_{34}$ is a regular sequence modulo $\mathcal{G}_2$ hence we have that
$$e\left(\large\sfrac{R_G(3)}{Q_2}\right) = 3 \cdot 2 = 6.$$
Analogously,
$$e\left(\large\sfrac{R_G(3)}{Q_3}\right) = 6.$$
Then,
\begin{align*}
    e\left(\large\sfrac{R_G(3)}{\mathbf{x}R_G(3)}\right) &= 16\\
    &= e\left(\large\sfrac{R_G(3)}{P_2}\right) + e\left(\large\sfrac{R_G(3)}{P_3}\right) + e\left(\large\sfrac{R_G(3)}{Q_2}\right) + e\left(\large\sfrac{R_G(3)}{Q_3}\right).
\end{align*}
Therefore, our claim is true.
\end{enumerate}

So far, by Nagata's Corollary \ref{Nagata2}, we have found the generators for $\Cl(R_G(3))$. In order to figure out the relations between such generators we have to calculate the valuations of $y_{23}$ and $y_{32}$ in the localization of $R_G(3)$ at each of the minimal primes.
Notice that as $(\mathbf{x}) = (y_{23}y_{32})$ is a radical ideal we have, 

\begin{minipage}[position]{5cm}
\begin{align*}
    (\mathbf{x})R_G(3)_{P_2} &= P_2R_G(3)_{P_2}\\
    (\mathbf{x})R_G(3)_{P_3} &= P_3R_G(3)_{P_3}
\end{align*}
 \end{minipage}
\begin{minipage}[position]{5cm}
\begin{align*}
    (\mathbf{x})R_G(3)_{Q_2} &= Q_2R_G(3)_{Q_2},\\
    (\mathbf{x})R_G(3)_{Q_3} &= Q_3R_G(3)_{Q_3},
\end{align*}
 \end{minipage}
 
\vspace{5pt}

Hence, $P_iR_G(3)_{P_i}$ and $Q_iR_G(3)_{Q_i}$ are principal ideals generated by $\mathbf{x}$ for $i= 2, 3$.
Let us find the valuation of $y_{23}$ in $R_G(3)_{P_2}$. We want to find a unit $\alpha$ of $R_G(3)_{P_2}$ and an integer $k$ such that
$$y_{23} = \alpha (y_{23}y_{32})^k.$$
As $y_{32} \notin P_2$, we have that $(y_{32})^{-1}$ is a unit in $R_G(3)_{P_2}$ and
$$y_{23} = (y_{32})^{-1}(y_{23}y_{32})^1,$$
which implies $v_{P_2}(y_{23}) = 1$. Analogously, as $y_{32}\notin Q_2$, we have that $v_{Q_2}(y_{23}) = 1$. On the other hand, notice that $y_{23} \notin P_3$ and $y_{23} \notin Q_3$, therefore they are units in $R_G(3)_{P_3}$ and $R_G(3)_{Q_3}$ respectively, which means that $v_{P_3}(y_{23}) = 0 = v_{Q_3}(y_{23})$.
An analogous argument leads us to conclude that $v_{P_2}(y_{32}) = 0 = v_{Q_2}(y_{32})$ and $v_{P_3}(y_{32}) = 1 = v_{Q_3}(y_{32})$.
The syzygies between the generators of $\Cl(R_G(3))$ are, then, linear combinations of

\begin{align*}
    [P_2]+[Q_2]=&0\\
    [P_3]+[Q_3]=&0.
\end{align*}
This means that $\Cl(R_G(3)) \cong \mathbb{Z}^2$.
\end{example}

\vspace{7pt}

In order to generalize the discussion exposed in Example \ref{ex1} we will specify some notation and include some useful propositions.

For each $i = 2, \dots, n-1$, define the list
\begin{equation}
\label{Fi}
    F_i = f_{12}, \dots, \hat{f}_{(i-1)i}, \hat{f}_{i(i+1)}, \dots, f_{(n-2)(n-1)}
\end{equation}
of all polynomials generating $L_G(3)$ besides $f_{(i-1)i}$ and $f_{i(i+1)}$. Consider the matrix
$$Y_i = \begin{pmatrix}
        y_{(i-1)1} & y_{(i-1)k} \\ y_{(i+1)1} & y_{(i+1)k}
    \end{pmatrix}$$
 and the vector $y_i = \begin{pmatrix}
        y_{i1} \\ y_{ik}
    \end{pmatrix}$
 where $k = 2$ for $i$ even and $k=3$ for $i$ odd.
 We define also the ideals $P_i$ and $Q_i$ for $i = 2, \dots, n-1$ in the following way,
 
\begin{align}
\label{defPQ}
    P_i =& \left(y_{i1}, y_{i2}, y_{i3}, F_i\right) \\
    Q_i =& \left( I_1\left(Y_iy_i \right),  det(Y_i), y_{i\hat{k}}, F_i \right),
\end{align}
where $\hat{k} = \begin{cases}
    2 \text{ for } k=3\\
    3 \text{ for } k=2.
\end{cases}$

Let
\begin{equation}
\label{x}
\mathbf{x} = y_{23}y_{32}\dots y_{(n-1)k},
\end{equation}
where $k = 2$ for $n$ even and $k=3$ for $n$ odd.

\begin{proposition}
\label{Rmk1} 
    Let $G$ be an $n$-path, $S$ be the polynomial ring in $3n$ variables and $R_G(3)$ the LSS-ring associated to $G$. The ideals $P_i$ and $Q_i$ defined in \ref{defPQ} are such that $\hgt(P_i) = \hgt(Q_i) = 1$ in $R_G(3)$ for all $i=2, \dots, n-1$.
\end{proposition}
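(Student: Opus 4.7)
My plan is to show $\hgt P_i = \hgt Q_i = n$ in the polynomial ring $S$. Since $L_G(3)$ is a complete intersection of height $n-1$ in $S$ (Theorem~\ref{CWThm}, as $\pmd(G)=2\leq 3$ for a path) and both $P_i$ and $Q_i$ contain $L_G(3)$, this will immediately give $\hgt P_i = \hgt Q_i = 1$ in $R_G(3)$. The inclusion $L_G(3)\subseteq P_i$ is clear because $f_{(i-1)i}, f_{i(i+1)} \in (y_{i1},y_{i2},y_{i3})$ while the remaining generators of $L_G(3)$ are exactly $F_i$. For $Q_i$ I observe that modulo $y_{i\hat{k}}$ the two entries of $Y_i y_i$ reduce precisely to $f_{(i-1)i}$ and $f_{i(i+1)}$, so again everything else lies in $F_i$.

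For the height of $P_i$, I would reduce modulo the three vertex-$i$ variables to land in the polynomial ring in $3(n-1)$ variables. There the image of $F_i$ is exactly the generating set of $L_{G\setminus\{i\}}(3)$, where $G\setminus\{i\}$ is a disjoint union of (at most two) subpaths with $n-3$ edges. Since $\pmd(G\setminus\{i\})\leq 2\leq 3$, Theorem~\ref{CWThm} makes $L_{G\setminus\{i\}}(3)$ a complete intersection, giving $\dim S/P_i = 3(n-1)-(n-3) = 2n$, hence $\hgt P_i = n$.

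For the height of $Q_i$, the crucial input is Herzog's classical theorem \cite{Herzog_1974}: the ``Herzog-type'' ideal $\mathcal{G}_i := (I_1(Y_i y_i), \det Y_i)$ is prime of height $2$ in the polynomial subring it uses, and hence in $S$ as well. Since $y_{i\hat{k}}$ does not occur in $\mathcal{G}_i$, I obtain $\hgt(\mathcal{G}_i + (y_{i\hat{k}})) = 3$. It then suffices to show that the $n-3$ polynomials of $F_i$ form a regular sequence modulo $(\mathcal{G}_i, y_{i\hat{k}})$, which would yield $\hgt Q_i = 3 + (n-3) = n$. Following the template of Example~\ref{ex1}, my strategy is to choose a weight vector $\omega$ that assigns large weight to the two free variables $y_{(i-1)\hat{k}}$ and $y_{(i+1)\hat{k}}$ while leaving the variables of $\mathcal{G}_i$ and $y_{i\hat{k}}$ at weight $0$; the large weights force $\ini_\omega(f_{(i-2)(i-1)}) = y_{(i-2)\hat{k}} y_{(i-1)\hat{k}}$ and $\ini_\omega(f_{(i+1)(i+2)}) = y_{(i+1)\hat{k}} y_{(i+2)\hat{k}}$, and an alternating-matching weighting on the remaining subforest handles the other edges of $F_i$. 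Arranging that every $\ini_\omega(f_{j,j+1})$ is a squarefree monomial in variables disjoint from those of $(\mathcal{G}_i, y_{i\hat{k}})$ and that these monomials are pairwise coprime, one sees immediately that they form a regular sequence modulo $\ini_\omega(\mathcal{G}_i, y_{i\hat{k}}) = (\mathcal{G}_i, y_{i\hat{k}})$, and Proposition~\ref{GBPropRS2} lifts this to a regular sequence on $(\mathcal{G}_i, y_{i\hat{k}})$ itself.

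The main obstacle is the combinatorial construction of the weight vector in the $Q_i$ case: one must simultaneously allot the ``free'' column $\hat{k}$ to the two distinguished edges incident to vertices $i\pm 1$, and find a positive matching decomposition of the rest of $G\setminus\{i\}$ using the remaining two columns in such a way that all leading monomials end up pairwise coprime. Because $G\setminus\{i\}$ is a union of paths (so $\pmd\leq 2$) and we have a genuinely third column to spend on the two distinguished edges, this is a mild perturbation of the Conca--Welker construction recorded in Lemma~\ref{LemTermOrd}.
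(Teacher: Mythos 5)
Your argument is correct and takes essentially the same route as the paper: the height of $P_i$ via a complete-intersection/dimension count for $L_{G\setminus\{i\}}(3)$, and the height of $Q_i$ via Herzog's height-two theorem for $\mathcal{G}_i$ together with a weight-vector argument making the elements of $F_i$ a regular sequence modulo $(\mathcal{G}_i, y_{i\hat{k}})$, followed by the Cohen--Macaulay descent to $R_G(3)$. Like the paper (which only exhibits the weight vector for $i=4$ in the $7$-path and asserts the generalization), you leave the general combinatorial construction of $\omega$ at the level of a sketch, so the two proofs are at the same level of detail.
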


\begin{proof}
    Notice that by Cohen-Macaulayness we have that
    \begin{equation}
        \label{eq4}
        \hgt \left(\large \sfrac{P_i}{L_G(3)}\right) = \dim \left(\large\sfrac{S}{L_G(3)}\right) - \dim \left(\large\sfrac{S}{P_i}\right).
    \end{equation}
    Then, we have that using Theorem \ref{CWThm}, as $d= 3\geq \pmd(G)$, $L_G(3)$ is a complete intersection. Notice that $P_i$ is generated by a regular sequence as none of the polynomials in $F_i$ use the variables $y_{i1}, y_{i2}, y_{i3}$. Hence, we can write equation (\ref{eq4}) as
    \begin{align*}
         \hgt \left(\large \sfrac{P_i}{L_G(3)}\right) &= (\#\text{variables in }S - \#E )-(\#\text{variables in }S - \#\text{generators of } P_i)\\
         &= \#\text{generators of } P_i-\#E \\
         &= n- (n-1) = 1.
    \end{align*}
    To calculate the height of $Q_i$ recall that Herzog proved in \cite{Herzog_1974} that the ideal
    $$\mathcal{G}_i = \left( I_1\left(Y_iy_i \right),  \det(Y_i)\right)$$
    has height $2$ for any $i \in \mathbb{N}$. As $y_{ik}$ is not used in the other generators of $Q_i$, to prove that $Q_i$ has the desired height it is enough to prove that $F_i$ is a regular sequence modulo $\mathcal{G}_i$. To do so we show that for each $i$ it is possible to define a weight vector $\omega$ such that $\ini_{\omega}(\mathcal{G}_i) = \mathcal{G}_i$ and $\ini_{\omega}(f_{rs})$ form a regular sequence modulo $\mathcal{G}_i$ for $f_{rs} \in F_i$. We exhibit the construction of the weight vector for $i=4$ in the $7$-path, its generalization follows immediately.
    
    Consider the $7$-path graph and the ideal
    $$Q_4 = \left( I_1\left(\begin{pmatrix}
        y_{31} & y_{32} \\ y_{51} & y_{52}
    \end{pmatrix} \begin{pmatrix}
        y_{41} \\ y_{42}
    \end{pmatrix} \right) ,  \det \begin{pmatrix}
        y_{31} & y_{32} \\ y_{51} & y_{52}
    \end{pmatrix} , f_{12}, f_{23}, f_{56}, f_{67}, y_{43}\right).$$
    Define the weight vector in the following way

    \begin{equation*}
        \omega(y_{ij}) =\begin{cases}
        1 \quad  \text{ for } y_{12}, y_{33}, y_{53}, y_{72}\\
        0 \quad \text{ otherwise.}
    \end{cases}
    \end{equation*}
    Notice that with this weight vector we have

    \begin{minipage}{5.5cm}
    \begin{align*}
        \ini_{\omega}(f_{12}) &= y_{12}y_{22},\\
        \ini_{\omega}(f_{23}) &= y_{23}y_{33},\\
    \end{align*}
    \end{minipage}
    \begin{minipage}{5.5cm}
    \begin{align*}
        \ini_{\omega}(f_{56}) &= y_{53}y_{63},\\
        \ini_{\omega}(f_{67}) &= y_{62}y_{72}.\\
    \end{align*}
    \end{minipage}

    Then, as each initial form uses different variables we have that the initial forms $\ini_{\omega}(f_{12}), \ini_{\omega}(f_{23}), \ini_{\omega}(f_{56}), \ini_{\omega}(f_{67})$ form a regular sequence in $\large\sfrac{S}{\mathcal{G}_4}$. Using Proposition \ref{GBPropRS2} we have that $f_{12}, f_{23}, f_{56}, f_{67}$ form a regular sequence modulo $\mathcal{G}_4$ as wanted.

    Extending this procedure for any $n$-path and any $Q_i$ we can conclude that $F_i$ is a regular sequence modulo $\mathcal{G}_i$. Then,
    $\hgt Q_i = \hgt \mathcal{G}_i + \# F_i + 1 = n.$
    We then have that
    \begin{align*}
        \hgt \left( \large \sfrac{Q_i}{L_G(3)}\right) &= \dim \left( \large\sfrac{S}{L_G(3)}\right)- \dim \left(\large\sfrac{S}{Q_i}\right)\\
        &= (\# \text{variables in } S - \#E) - (\# \text{variables in } S - \hgt Q_i)\\
        &= \hgt Q_i - \#E = 1.
    \end{align*}
    Hence, $\hgt P_i = \hgt Q_i = 1$ in $R_G(3)$.
\end{proof}

\begin{lemma}
    Let $R_G(3)$ be the LSS-ring of an $n$-path graph. Consider $P_i$ and $Q_i$ stated as in \ref{defPQ} for $i = 2, \dots, n-1$ and let $\mathbf{x}$ be as stated in \ref{x}.
    We have that
    $$e\left(\large \sfrac{R_G(3)}{(\mathbf{x})}\right) = \sum \limits_{i=1}^{n-2} e\left(\large\sfrac{R_G(3)}{P_i}\right) + \sum \limits_{i=1}^{n-2} e\left(\large\sfrac{R_G(3)}{Q_i}\right).$$
    \label{lemmamultip}
\end{lemma}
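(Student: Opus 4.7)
The plan is to verify the claimed identity by computing each multiplicity explicitly using the fact that all the relevant ideals are either complete intersections or augmentations of the Herzog determinantal ideals whose multiplicities have already been established. Throughout I will interpret the two sums as ranging over $i = 2,\dots, n-1$ (matching the indexing in \ref{defPQ}), which gives $n-2$ summands of each type.

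First I would compute the left hand side. By Theorem \ref{CWThm}, $L_G(3)$ is a complete intersection generated by the $n-1$ quadrics $f_{i(i+1)}$, so $e(R_G(3)) = 2^{n-1}$. Since $R_G(3)$ is a domain (again by Theorem \ref{CWThm}), the element $\mathbf{x} \in R_G(3)$ defined in \ref{x} is a nonzerodivisor of degree $n-2$, so by the associativity formula for multiplicity along a principal hypersurface,
$$e\!\left(\sfrac{R_G(3)}{(\mathbf{x})}\right) = \deg(\mathbf{x}) \cdot e(R_G(3)) = (n-2)\cdot 2^{n-1}.$$

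Next I would compute each $e(R_G(3)/P_i)$. By definition $P_i = (y_{i1}, y_{i2}, y_{i3}, F_i)$, which contains three linear forms in the variables attached to vertex $i$ together with the $n-3$ quadrics in the list $F_i$, none of which use those three variables. The height computation in Proposition \ref{Rmk1} shows these $n$ generators form a regular sequence in $S$, so $P_i$ is itself a complete intersection. Since $L_G(3) \subset P_i$ and $R_G(3)/P_i = S/P_i$, we get
$$e\!\left(\sfrac{R_G(3)}{P_i}\right) = 1^3 \cdot 2^{n-3} = 2^{n-3}.$$

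For $e(R_G(3)/Q_i)$ the computation relies on the structure $Q_i = \mathcal{G}_i + (y_{i\hat k}) + (F_i)$. Proposition \ref{Rmk1} already establishes that $y_{i\hat k}, F_i$ form a regular sequence modulo $\mathcal{G}_i$, so multiplicities multiply:
$$e(S/Q_i) = e(S/\mathcal{G}_i) \cdot 1 \cdot 2^{n-3}.$$
The Herzog-type ideal $\mathcal{G}_i$ is, as noted in Example \ref{ex1} (and by the reference to \cite{HERZOG92}), isomorphic to the ideal of $2$-minors of a generic $3\times 2$ matrix; its multiplicity is $3$. Hence
$$e\!\left(\sfrac{R_G(3)}{Q_i}\right) = 3\cdot 2^{n-3}.$$
Finally I would add everything up: there are $n-2$ ideals $P_i$ and $n-2$ ideals $Q_i$, so
$$\sum_{i=2}^{n-1} e\!\left(\sfrac{R_G(3)}{P_i}\right) + \sum_{i=2}^{n-1} e\!\left(\sfrac{R_G(3)}{Q_i}\right) = (n-2)\,2^{n-3} + 3(n-2)\,2^{n-3} = (n-2)\,2^{n-1},$$
which matches the left hand side. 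The main obstacle, which is really the only nontrivial point, is the assertion that the Herzog-type ideal $\mathcal{G}_i$ has multiplicity exactly $3$; everything else is a careful bookkeeping of linear and quadratic generators together with the regular sequence statement already proved in Proposition \ref{Rmk1}.
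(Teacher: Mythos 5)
Your proof is correct and follows essentially the same route as the paper: compute $e(R_G(3)/(\mathbf{x}))=(n-2)2^{n-1}$ from the complete intersection structure and the degree of $\mathbf{x}$, compute $e(R_G(3)/P_i)=2^{n-3}$ and $e(R_G(3)/Q_i)=3\cdot 2^{n-3}$ from the regular sequence decompositions and the multiplicity $3$ of the Herzog-type ideal, and verify that the totals agree. The only cosmetic difference is your explicit reindexing of the sums over $i=2,\dots,n-1$, which matches the intended meaning of the statement.
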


\begin{proof}
    Recall $L_G(3)$ is a complete intersection by Theorem \ref{CWThm}. Therefore, $f_{12}, \dots, f_{(n-1)n}$ is a regular sequence of $(n-1)$ polynomials of degree $2$. Hence,
    $$e(R_G(3)) = 2^{(n-1)}.$$
    As $\mathbf{x}$ is a non zero divisor of $R_G(3)$ of degree $n-2$, we have that 
    $$e \left( \large\sfrac{R_G(3)}{(\mathbf{x})} \right) = (n-2) 2^{(n-1)}.$$
    On the other side we have that $P_i$ is generated by distinct variables and a regular sequence of $n-3$ polynomials of degree $2$ on different variables then,
    $$e \left( \large\sfrac{R_G(3)}{P_i} \right) = 2^{(n-3)}.$$

    The ideal $Q_i$ is generated by a \textit{Herzog type ideal} $\mathcal{G}$ of size $2$, $(n-3)$ polynomials of degree $2$ which form a regular sequence modulo $\mathcal{G}$ and an extra variable. 
    As we saw in Example \ref{ex1} the multiplicity of $\mathcal{G}$ is $3$, hence the multiplicity of $Q_i$ is given by
    $$e(\large\sfrac{R_G(3)}{Q_i}) = 3 \cdot 2^{n-3}.$$
    We can conclude that
    \begin{align*}
        e \left( \large\sfrac{R_G(3)}{(\mathbf{x})} \right) &= (n-2) 2^{n-1} \\
        &= (n-2) \left( 2^{n-3} + 3 \cdot 2^{n-3} \right)\\
        &= \sum \limits_{i=1}^{n-2} e(\large\sfrac{R_G(3)}{P_i}) + \sum \limits_{i=1}^{n-2} e(\large\sfrac{R_G(3)}{Q_i}).
    \end{align*}  
\end{proof}

In the following theorem we generalize the claim of Example \ref{ex1}. The arguments for proving it are analogous except for an extra step that is required when proving that $Q_i$ is prime when $n \geq 5$.

\begin{proposition}
    \label{PMinPrimes}
    Let $G$ be an $n$-path graph, $R_G(3)$ the LSS-ring associated to $G$ with $d=3$ and $\mathbf{x}$ as defined in \ref{x}.
    The set of minimal primes of the ideal $(\mathbf{x}) \subseteq R_G(3)$ is
    $$\Min(\mathbf{x}) = \{P_2, \dots, P_{n-1}, Q_2, \dots, Q_{n-1}\},$$
    with $P_i$ and $Q_i$ described in \ref{defPQ}.
\end{proposition}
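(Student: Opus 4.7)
The plan is to apply Proposition \ref{PropMultip}, which reduces the claim to four items: (i) each $P_i$ and each $Q_i$ is an ideal of $R_G(3)$ containing $(\mathbf{x})$; (ii) each $P_i$ and each $Q_i$ is prime; (iii) every associated prime of $(\mathbf{x})$ has height one; and (iv) the multiplicity inequality
$$\sum_{i=2}^{n-1} e\!\left(R_G(3)/P_i\right) + \sum_{i=2}^{n-1} e\!\left(R_G(3)/Q_i\right) \;\geq\; e\!\left(R_G(3)/(\mathbf{x})\right)$$
holds. Items (iii) and (iv) come essentially for free: Cohen--Macaulayness of $R_G(3)/(\mathbf{x})$ (which holds because $R_G(3)$ is a complete intersection by Theorem \ref{CWThm} and $\mathbf{x}$ is a nonzerodivisor on it) gives unmixedness of $(\mathbf{x})$ via the Unmixedness Theorem, while the multiplicity equality is exactly Lemma \ref{lemmamultip}. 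Containment in (i) is by inspection: the factor $y_{i\hat{k}}$ of $\mathbf{x}$ indexed by vertex $i$ lies explicitly in both $P_i$ and $Q_i$.

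Proving that $P_i$ is prime is straightforward: $P_i = (y_{i1}, y_{i2}, y_{i3}) + L_{G \setminus \{i\}}(3)$, where $G \setminus \{i\}$ is a disjoint union of two shorter paths, each having $\pmd \leq 2$, so $d = 3 \geq \pmd+1$ on each component and $L_{G \setminus \{i\}}(3)$ is prime by Theorem \ref{CWThm}. Since the three variables $y_{i1}, y_{i2}, y_{i3}$ do not appear in $L_{G \setminus \{i\}}(3)$, the sum $P_i$ is a prime ideal.

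The main obstacle is the primality of $Q_i$ for $n \geq 5$: Example \ref{ex1} handled the case $n=4$ via Huneke's criterion applied with a single extra polynomial $f_{34}$, whereas in general $F_i$ contributes $n-3$ polynomials that must be folded in. My plan is to mimic the staged-quotient scheme of Example \ref{ex1}. Write $S = S'[y_{i1}, y_{i2}, y_{i3}]$, with $S'$ the polynomial ring in the variables outside vertex $i$, so that $S/(F_i) \cong (S'/L_{G \setminus \{i\}}(3))[y_{i1}, y_{i2}, y_{i3}]$ is a polynomial extension of a Cohen--Macaulay domain; modding out $y_{i\hat{k}}$ keeps the quotient a CM domain. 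It remains to factor by the Herzog-type ideal $(I_1(Y_i y_i), \det Y_i)$ over this base. One route is to apply Huneke's criterion for symmetric algebras \cite{Huneke_1981} as in the $n=4$ case, where primality reduces to a grade bound on the column vector defining the presentation; this grade bound reduces in turn to the height computation already carried out in Proposition \ref{Rmk1}. An alternative, possibly cleaner, route is to extend the weight-vector construction of Proposition \ref{Rmk1} to one that makes $\ini_\omega(Q_i)$ visibly prime (e.g.\ by turning the $F_i$ into a monomial regular sequence on disjoint variables, leaving the Herzog-type ideal untouched), and then invoke Proposition \ref{GBProp}. Once primality of $Q_i$ is established, all four items of Proposition \ref{PropMultip} are verified and the proposition follows.
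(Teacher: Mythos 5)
Your overall strategy is the paper's: verify the four hypotheses of Proposition \ref{PropMultip}, getting unmixedness from Cohen--Macaulayness, the multiplicity identity from Lemma \ref{lemmamultip}, and primality of $P_i$ from the fact that $(F_i)$ is the LSS ideal of two shorter paths (prime by Theorem \ref{CWThm}) in variables disjoint from $y_{i1},y_{i2},y_{i3}$. The gap is exactly where you flag the difficulty: primality of $Q_i$ for $n\geq 5$, and neither of your two proposed routes works as described. The weight-vector route cannot succeed: if $\omega$ turns each $f_{rs}\in F_i$ into a monomial $y_{ra}y_{sa}$, then $\ini_\omega(Q_i)$ contains the product $y_{ra}y_{sa}$ but neither factor, so it is not prime, and Proposition \ref{GBProp} gives nothing. (Such weight vectors are used in Proposition \ref{Rmk1} only to certify a \emph{regular sequence} via Proposition \ref{GBPropRS2}, which is a much weaker conclusion than primality.) The Huneke route in the order you propose --- first form the CM domain $S/(F_i,y_{i\hat k})$ and then ``factor by the Herzog-type ideal over this base'' --- is also problematic: $(I_1(Y_iy_i),\det Y_i)$ is not a single symmetric-algebra extension of that base (the generator $\det Y_i$ is not linear in new variables), and Herzog's primality theorem for this ideal is proved in the generic case, i.e.\ over a polynomial ring in the entries of $Y_i$; over your base the entries of $Y_i$ already satisfy the LSS relations coming from $f_{(i-2)(i-1)}$ and $f_{(i+1)(i+2)}$, so it cannot simply be quoted. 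Moreover, the grade bound Huneke requires is on the ideal generated by the entries of the presentation column in the relevant quotient, which is not the height computation of Proposition \ref{Rmk1} (that proposition computes $\hgt Q_i$ itself).

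The paper resolves this by reversing your order and inducting on $n$: it starts from the genuinely generic Herzog ideal plus $y_{i\hat k}$ in a small polynomial ring (the base case $n=4$ of Example \ref{ex1}) and adjoins the edge polynomials of $F_i$ one at a time from the ends of the path. Each step adjoins a single form $f_{(n-1)n}$ that is linear in three \emph{new} variables $y_{n1},y_{n2},y_{n3}$, so $S/Q_{i,n}$ is the symmetric algebra over $S/Q_{i,n-1}$ of the rank-one module presented by the column $(y_{(n-1)1},y_{(n-1)2},y_{(n-1)3})^{T}$, and Huneke's criterion applies cleanly. The required condition $\grade\bigl((y_{(n-1)1},y_{(n-1)2},y_{(n-1)3})\bigr)\geq 2$ in $S/Q_{i,n-1}$ is then checked by the elementary estimate that these three variables do not appear in $Q_{i,n-2}$, so their height there is $3$, and it drops by at most $1$ upon adding $f_{(n-2)(n-1)}$. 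If you reorganize your argument along these lines --- iterated symmetric algebras starting from the generic Herzog ideal, with the explicit grade estimate at each stage --- the proof goes through.
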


\begin{proof}
We first show that $P_i, Q_i \in \Min(\mathbf{x})$.
Notice that the polynomials in $F_i$ defined at \ref{Fi} are the generators of a  LSS -ideal of two disconnected path graphs with less vertices than $n$. Therefore, by Theorem \ref{CWThm} we have that $F_i$ generates a prime ideal. The ideal $P_i$ is, then, generated by the prime ideal $(F_i)$ and the variables $y_{i1}, \dots, y_{i3}$ which do not appear in any of the polynomials in $F_i$. Therefore, $P_i$ is a prime ideal for $i = 2, \dots, n-1$.
Now, to show that $Q_i$ is a prime ideal we will proceed by induction on the number $n$ of vertices of $G$. In Example \ref{ex1} we proved the result for $n=4$. Supposing the result true for $n-1$, we will show that it is also true for $n$.

Let $Q_{i,j}$ denote the ideal of type $Q_i$ for the $j$-path graph and let
$$I = (y_{(n-1)1}, y_{(n-1)2}, y_{(n-1)3}) \subseteq R_G(3).$$

We use \textit{Huneke's criterion} \cite{Huneke_1981} on $Q_{i,j}$. That is, we need to show that $\hgt I \geq 2$ in $\large\sfrac{S}{Q_{i,n-1}}$. As $R_G(3)$ is a Cohen-Macaulay ring we have that
$$\hgt \left(\large\sfrac{I}{Q_{i,n-1}}\right) =  \dim \left(\large\sfrac{S}{Q_{i,n-1}}\right) - \dim \left(\large\sfrac{S}{I}\right).$$
Notice that $Q_{i,n-1} = (Q_{i,n-2},f_{(n-2)(n-1)})$, then
$$\dim\left(\large\sfrac{S}{Q_{i,n-1}}\right) \geq \dim\left(\large\sfrac{S}{Q_{i,n-2}}\right) - 1$$
and we have
$$\hgt\left(\large\sfrac{I}{Q_{i,n-1}}\right) \geq  \dim \left(\large\sfrac{S}{Q_{i,n-2}}\right) - \dim \left(\large\sfrac{S}{I}\right) -1.$$
Then, again by Cohen-Macaulayness, we have that 
$$\dim \left(\large\sfrac{S}{Q_{i,n-2}}\right) - \dim \left(\large\sfrac{S}{I}\right) = \hgt\left(\large\sfrac{I}{Q_{i,n-2}}\right)$$
and we get
$$\hgt\left(\large\sfrac{I}{Q_{i,n-1}}\right) \geq \hgt\left(\large\sfrac{I}{Q_{i,n-2}}\right) - 1.$$
Notice that as none of the variables that generate $I$ are used in $Q_{i,n-2}$, the height of $I$ in $\large\sfrac{S}{Q_{i,n-2}}$ is $3$. Hence $\hgt\left(\large\sfrac{I}{Q_{i,n-1}}\right) \geq 2$ as we wanted, which means that $Q_i = Q_{i,n}$ is prime.

We have that $P_i$ and $Q_i$ are prime ideals containing $(\mathbf{x})$ for $i = 2, \dots, n-1$. From Proposition \ref{Rmk1} we also have that $\hgt P_i = \hgt Q_i = 1$ and recall that $\hgt (\mathbf{x}) = 1$. Then, $P_i, Q_i \in \Min (\mathbf{x})$ for $i= 2, \dots, n-1$.

Recall that by Theorem \ref{CWThm} the ring $R_G(3)$ is a Cohen-Macaulay domain, then the Unmixed Theorem holds. In particular, as $(\mathbf{x})$ is a principal ideal and $\hgt (\mathbf{x}) =1$ we have that $(\mathbf{x})$ is unmixed. That is $\hgt (\mathbf{x}) = \hgt(Q)$ for all $Q \in \Ass \left( \large \sfrac{R_G(3)}{(\mathbf{x})}\right)$.
Also, recall that by Lemma \ref{lemmamultip} we have
$$e\left(\large \sfrac{R_G(3)}{(\mathbf{x})}\right) = \sum \limits_{i=1}^{n-2} e\left(\large\sfrac{R_G(3)}{P_i}\right) + \sum \limits_{i=1}^{n-2} e\left(\large\sfrac{R_G(3)}{Q_i}\right).$$

Using Proposition \ref{PropMultip} we conclude that
$$\Min(\mathbf{x}) = \{P_2, \dots, P_{n-1}, Q_2, \dots, Q_{n-1}\}.$$
\end{proof}

We can now calculate the divisor class group for an $n$-path obtaining the main result of this subsection.

\begin{proposition}
\label{ThmDCGPath}
    Let $G = ([n], E)$ be an $n$-path graph. If $d = 3$ then the divisor class group of $R_G(3)$ is
    $$\Cl(R_G(3)) \cong \mathbb{Z}^{n-2}.$$
\end{proposition}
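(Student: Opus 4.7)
The strategy follows the template of Example \ref{ex1}, now leveraging Proposition \ref{PMinPrimes} and Lemma \ref{lemmamultip} to identify the minimal primes of $(\mathbf{x})$ and the fact that $(\mathbf{x})$ is radical.

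The first step is to show that $R_G(3)_{\mathbf{x}}$ is a UFD. Since $\mathbf{x} = \prod_{i=2}^{n-1} y_{i\hat{k}_i}$ makes one variable invertible at every internal vertex of $G$, one can apply Lemma \ref{RDcong} iteratively to peel off leaves: remove leaf $1$ using the $1 \times 1$ determinant $y_{2\hat{k}_2} = y_{23}$ (a unit in the localization), then peel off the new leaf $2$ using $y_{3\hat{k}_3} = y_{32}$, and continue until we are reduced to the LSS ring of a single edge $H$. For $H$ one has $\pmd(H)+k(H)+1 = 3$, so $R_H(3)$ is a UFD by Theorem \ref{ThmUFD}; polynomial extensions and localization preserve factoriality, hence $R_G(3)_{\mathbf{x}}$ is a UFD.

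By Nagata's Corollary \ref{Nagata2}, applied with the factorial subring built from the iterated peeling so that $B[\mathbf{x}^{-1}] = R_G(3)[\mathbf{x}^{-1}]$ and $\mathbf{x}$ factors in $B$ as $\prod y_{i\hat{k}_i}$, the class group $\Cl(R_G(3))$ is generated by the classes $[P]$ for $P \in \Min(\mathbf{x})$ subject to the $n-2$ syzygies
$$\sum_{P \in \Min(\mathbf{x})} v_P\!\left(y_{i\hat{k}_i}\right)[P] \;=\; 0, \qquad i = 2,\dots,n-1.$$
By Proposition \ref{PMinPrimes}, $\Min(\mathbf{x}) = \{P_2,\dots,P_{n-1},Q_2,\dots,Q_{n-1}\}$ gives $2(n-2)$ generators. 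To evaluate the valuations I would use that radicality of $(\mathbf{x})$ yields $(\mathbf{x})R_G(3)_{P_i} = P_iR_G(3)_{P_i}$ and analogously for $Q_i$, so both localized maximal ideals are principal and generated by $\mathbf{x}$. For $j \ne i$, the factor $y_{j\hat{k}_j}$ lies outside $P_i$ (it is not among its generators and does not appear in $(F_i)$, which is a prime LSS-ideal of a subgraph) and outside $Q_i$ (the only nontrivial case is $j \in \{i-1,i+1\}$, handled by the fact that the Herzog-type ideal $\mathcal{G}_i$ cuts out a domain and cannot contain individual variables). Hence $v_{P_i}(y_{j\hat{k}_j}) = v_{Q_i}(y_{j\hat{k}_j}) = \delta_{ij}$, and each syzygy reduces to $[P_i] + [Q_i] = 0$. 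Eliminating $[Q_i] = -[P_i]$ leaves a free abelian group on $[P_2], \dots, [P_{n-1}]$, giving $\Cl(R_G(3)) \cong \mathbb{Z}^{n-2}$.

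The most delicate point will be the non-membership $y_{j\hat{k}_j} \notin Q_i$ for $j \in \{i-1, i+1\}$: these variables do appear inside the matrix $Y_i$ whose $1$-minors and determinant generate the subideal $\mathcal{G}_i \subseteq Q_i$, so one must argue, via Herzog's result \cite{Herzog_1974} that the Herzog-type quotient is a domain, that no individual variable is forced into $\mathcal{G}_i$.
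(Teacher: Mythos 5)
Your proposal follows the paper's proof essentially step for step: localize at $\mathbf{x}$ and reduce to the single edge via Lemma \ref{RDcong} to get $\Cl(R_G(3)_{\mathbf{x}})=0$, invoke Corollary \ref{Nagata2} and Proposition \ref{PMinPrimes} for the $2(n-2)$ generators, and use radicality of $(\mathbf{x})$ to compute the valuations and obtain the $n-2$ relations $[P_i]+[Q_i]=0$. The ``delicate point'' you flag ($y_{j\hat{k}_j}\notin Q_i$ for $j=i\pm 1$, where these variables do occur in $Y_i$) is simply asserted in the paper, but it is settled by noting that $Q_i$ is homogeneous with a single degree-one generator $y_{i\hat{k}}$, so its linear part contains no other variable.
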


\begin{proof}
    Recall that $L_G(3)$ is a prime radical complete intersection ideal by Theorem \ref{CWThm} and $R_G(d)$ is normal by Theorem \ref{ThmNormal}. To calculate its divisor class group we will localize $R_G(3)$ at the variables associated to the vertices of degree $2$. That is, we localize at $\mathbf{x} \in R_G(3)$ as defined in equation \ref{x}.
    Let $H$ be the graph consisting in just one edge $\{1,2\}$. Notice that $\pmd(H) = 1$ and $k(H) = 1$, then by Theorem \ref{ThmUFD} we know that $R_H(3)$ is a unique factorization domain. Then, applying repeatedly Lemma \ref{RDcong} we have that
    $$R_G(3)_{\mathbf{x}} \cong R_H(3)[y_{i1}, y_{i2} \hspace{2pt} | \hspace{2pt} i \in \{3, \dots, n\}, i \text{ even}][y_{i1}, y_{i3} \hspace{2pt} | \hspace{2pt} i \in \{3, \dots, n\}, i \text{ odd}]_{\mathbf{x}}.$$
    Then, $R_G(3)_{\mathbf{x}}$ is a UFD and $\Cl(R_G(3)_{\mathbf{x}}) = 0$. Using Corollary \ref{Nagata2}, we have that
    $$\Cl(R_G(3)) = \langle [P] \hspace{1pt}|\hspace{1pt} P \in \Min(\mathbf{x})\rangle,$$
    and by Proposition \ref{PMinPrimes} we can conclude that
     $$\Cl(R_G(3)) = \langle [P_i], [Q_i] \hspace{2pt}|\hspace{2pt} i= 2, \dots, n-1\rangle,$$
    with $P_i, Q_i$ defined as in \ref{defPQ}.
    
    To understand the relations between the generators of $\Cl(R_G(3))$ first notice that as $(\mathbf{x})$ is a radical ideal. We have that
    \begin{align*}
        (\mathbf{x})R_G(3)_{P_i} &= P_iR_G(3)_{P_i}\\
        (\mathbf{x})R_G(3)_{Q_i} &= Q_iR_G(3)_{Q_i}.
    \end{align*}
    Then, $P_i$ and $Q_i$ are principal ideals generated by $\mathbf{x}$ in $R_G(3)_{P_i}$ and $R_G(3)_{Q_i}$ respectively. Let
    $$\mathbf{x_i} = y_{23}y_{32}\dots y_{(i-1)l}y_{(i+1)l} \dots y_{(n-1)k},$$
    for $l=2$ or $l=3$ depending on the parity of $i$. Notice that $\mathbf{x_i} \notin P_i$, therefore it is a unit in $R_G(3)_{P_i}$. For $y_{il} \in P_i$ we have that $y_{il} = (\mathbf{x_i})^{-1} (\mathbf{x})^1.$ Hence, the valuation of $y_{il}$ in $R_G(3)_{P_i}$ is $v_{P_i}(y_{il}) = 1$.
    Analogously, we have that $\mathbf{x_i} \notin Q_i$ and it is a unit in $R_G(3)_{Q_i}$. We can conclude that $v_{Q_i}(y_{il}) = 1$.
    On the other side we have that for $j \neq i$ we have that $y_{jl}\notin P_i$ and $y_{jl}\notin Q_i$. This means that $y_{jl}$ is a unit in $R_G(3)_{Pi}$ and $R_G(3)_{Q_i}$ and its respective valuations are $v_{P_i}(y_{il}) = 0$ and $v_{Q_i}(y_{il}) = 0$.
    Hence, the syzygies between the generators of $\Cl(R_G(3))$ are
    $$[P_i] + [Q_i]= 0$$
    for $i= 2, \dots, n-1$.
   This means that there are $2(n-2)$ generators and $n-2$ relations and we can conclude that
    $$\Cl(R_G(3)) \cong \mathbb{Z}^{(n-2)}.$$    
\end{proof}

\bibliographystyle{plain}
\bibliography{biblio} 

\end{document}